\def\l@section{\@tocline{1}{0pt}{1pc}{}{}}
\def\l@susection{\@tocline{2}{0pt}{1pc}{4.6em}{}}
\def\l@subsubsection{\@tocline{3}{0pt}{1pc}{7.6em}{}}
\renewcommand{\tocsection}[3]{%
  \indentlabel{\@ifnotempty{#2}{\makebox[2.3em][l]{%
    \ignorespaces#1 #2.\hfill}}}#3}
\renewcommand{\tocsubsection}[3]{%
  \indentlabel{\@ifnotempty{#2}{\hspace*{2.3em}\makebox[2.3em][l]{%
    \ignorespaces#1 #2.\hfill}}}#3}
\renewcommand{\tocsubsubsection}[3]{%
  \indentlabel{\@ifnotempty{#2}{\hspace*{4.6em}\makebox[3em][l]{%
    \ignorespaces#1 #2.\hfill}}}#3}
\numberwithin{equation}{section}
\newcounter{hours}\newcounter{minutes}
\newtheorem{thm}{Theorem}[section]
\newtheorem{lem}[thm]{Lemma}
\newtheorem{cor}[thm]{Corollary}
\newtheorem{prop}[thm]{Proposition}
\theoremstyle{remark}                  %% For unnumbered Remarks, etc.
\newtheorem{rmk}[thm]{Remark}
\theoremstyle{definition}
\newtheorem{defin}[thm]{Definition}
\newtheorem{ex}[thm]{Example}
\def\Xbar{\bar{X}}
\def\Ybar{\bar{Y}}
\def\diam{\textnormal{diam}}
\def\det{\textnormal{det}}
\def\Id{\textnormal{Id}}
\newcommand{\abs}[1]{\lvert#1\rvert}
\newcommand{\Norm}[2][]{\lVert#2\rVert_{#1}}
\newcommand{\inner}[2]{\langle #1, #2\rangle}
\DeclareMathOperator{\W}{M\MRkern K}
\newcommand{\MRkern}{%
  \mkern-5.8mu
  \mathchoice{}{}{\mkern0.2mu}{\mkern0.5mu}%
}
\DeclareMathOperator{\spt}{spt}
\DeclareMathOperator{\distop}{d}
\newcommand{\dist}[2]{d{\left(#1, #2\right)}}
\newcommand{\hdist}[2]{d_{\mathcal{H}}{\left(#1, #2\right)}}
\def\H{\mathcal{H}}
\DeclareMathOperator{\ch}{conv}
\def\rhobar{\bar{\rho}}
\def\mubar{\bar{\mu}}
\def\xbar{\bar{x}}
\newcommand{\cExp}[2]{\exp^c_{#1}({#2})}
\newcommand\normal[2]{\mathcal{N}_{#1}(#2)}
\def\ybar{\bar{y}}
\newcommand{\subdiff}[2]{\partial {#1}{(#2)}}
\newcommand{\csubdiff}[3][]{\partial^{#1}_c{#2}{(#3)}}
\newcommand{\locsubdiff}[3][]{\partial^{#1}_{\mathrm{loc}}{#2}{(#3)}}
\newcommand{\locdiff}[3][]{\nabla^{#1}_{\mathrm{loc}}{#2}{(#3)}}
\def\S{\mathbb{S}}
\DeclareMathOperator{\proj}{proj}
\newcommand{\dVol}[1][]{d \textnormal{Vol}_{#1}}
\newcommand{\R}{\mathbb{R}}
\def\Ybar{\bar{Y}}
\newcommand{\outrad}[1][]{R_{\Omega_{#1}}}
\newcommand{\inrad}[1][]{r_{\Omega_{#1}}}
\newcommand{\conerad}[1][]{s_{\Omega_{#1}}}
\DeclareMathOperator*{\essinf}{ess\,inf}
\DeclareMathOperator*{\esssup}{ess\,sup}
\begin{document}
\title{Conditions for existence of single valued optimal transport maps on convex boundaries with nontwisted cost}

\author{Seonghyeon Jeong}
\address[Seonghyeon Jeong]{Department of Applied Mathematics, National Sun Yat-Sen University}
\email{jeongs1466@gmail.com} 

\author{Jun Kitagawa}
\address[Jun Kitagawa]{Department of Mathematics, Michigan State University}
\email{kitagawa@math.msu.edu}
\begin{abstract}
We prove that if $\Omega\subset \mathbb{R}^{n+1}$ is a (not necessarily strictly) convex, $C^1$ domain, and $\mu$ and $\bar{\mu}$ are probability measures absolutely continuous with respect to surface measure on $\partial \Omega$, with densities bounded away from zero and infinity, whose $2$-Monge-Kantorovich distance is sufficiently small, then there exists a continuous Monge solution to the optimal transport problem with cost function given by the quadratic distance on the ambient space $\mathbb{R}^{n+1}$. This result is also shown to be sharp, via a counterexample when $\Omega$ is uniformly convex but not $C^1$. Additionally, if $\Omega$ is $C^{1, \alpha}$ regular for some $\alpha$, then the Monge solution is shown to be H\"older continuous.
\end{abstract}

\date{\today}

\date{}
\keywords{Optimal transport, Monge-Amp{\`e}re equations}
\subjclass[2020]{35J96, 49Q22}
\maketitle
\markboth{S. Jeong and J. Kitagawa}{}

%\tableofcontents 

\section{Introduction}
Recall that the \emph{Monge (optimal transport) problem} is, given Borel probability spaces $(\mathcal{X}, \mu)$ and $(\bar{\mathcal{X}}, \mubar)$, and a Borel \emph{cost function} $c: \mathcal{X}\times\bar{\mathcal{X}}\to \R$, to find a minimizer of the \emph{transport cost} 
\begin{align}\label{eqn: Monge prob}
    T\mapsto \int_{\mathcal{X}}c(X, T(X))d\mu(X) 
\end{align}
over the collection of Borel maps $T: \mathcal{X}\to \bar{\mathcal{X}}$ satisfying $T_\sharp \mu=\mubar$; such minimizers are known as \emph{Monge solutions} if they exist.

In this paper, we will be concerned with the case when $\mathcal{X}=\bar{\mathcal{X}}=\partial \Omega$ for some convex body $\Omega\subset \R^{n+1}$, with the cost $c: \R^{n+1}\times \R^{n+1}\to \R$  defined by 
\begin{align*}
    c(X, \Xbar):=\frac{\lvert X-\Xbar\rvert^2}{2}
\end{align*}
where $\lvert \cdot\rvert$ denotes the usual Euclidean norm in $\R^{n+1}$.

By the work \cite{GangboMcCann00} of Gangbo and McCann, it is known that if $\Omega$ is \emph{uniformly convex}, when $\mu$ and $\mubar$ are absolutely continuous with respect to the surface measure on $\partial\Omega$ with densities bounded away from zero and infinity, there is a unique \emph{Kantorovich solution} to the associated optimal transport problem with the above cost $c$. Recall the \emph{Kantorovich problem} is to find a minimizer of 
\begin{align*}
    \gamma\mapsto \int_{\mathcal{X}\times \bar{\mathcal{X}}}cd\gamma
\end{align*}
over
\begin{align*}
    \gamma\in \Pi(\mu, \mubar):=\{\gamma\in \mathcal{P}(\mathcal{X}\times\bar{\mathcal{X}})\mid \text{the left and right marginals of }\gamma\text{ are }\mu \text{ and }\mubar,\text{ respectively}\}.
\end{align*}
Even for such $\mu$ and $\mubar$, a solution of the Monge problem \eqref{eqn: Monge prob} may fail to exist in this case, as the cost function fails to satisfy the so-called Twist condition as in \cite[P.234]{Villani09}.

By the result \cite{KitagawaWarren12} of the second author and Warren, if $\partial \Omega=\S^{n}$ and the \emph{$2$-Monge-Kantorovich distance} defined by
\begin{align*}
    \W_2(\mu,\mubar):=\left(\inf_{\gamma\in \Pi(\mu, \mubar)}\int_{\partial\Omega\times \partial\Omega}cd\gamma\right)^{\frac{1}{2}}
\end{align*}
is sufficiently small, then there is a Monge solution; by using \cite[Theorem 3.8]{GangboMcCann00}, it is possible to construct an explicit example on $\S^n\times \S^n$ where $\W_2(\mu,\mubar)$ is large and there is no Monge solution.

The main goal of this paper is to show when $\Omega$ is a $C^1$, convex body which \emph{may not be strictly convex}, if $\W_2(\mu,\mubar)$ is small enough depending only on $\Omega$, then there exists a Monge solution minimizing \eqref{eqn: Monge prob}. This result is actually \emph{sharp}, as demonstrated in Example~\ref{ex: non C1} below, where we exhibit a uniformly convex body that is not $C^1$, and pairs of measures whose $\W_2$ distances are arbitrarily close to zero, but a Monge solution fails to exist in each case. Additionally, by exploiting techniques from the regularity theory in \cite{GuillenKitagawa17}, we obtain H\"older regularity of the Monge solution if $\Omega$ has a $C^{1, \alpha}$ boundary.

Our main results are as follows. The quantities $\conerad$, $C_{\rho, \Omega}$, $C_{\partial\Omega}$, and $L_{\pi_\Omega}$ are defined in Lemma~\ref{lem: interior cone}, Proposition~\ref{prop: stay away}, \eqref{eqn: geodesic distance bound}, and Lemma~\ref{lem: rad proj push} respectively. 
\begin{thm}\label{thm: monge solution}
    Let $\mu=\rho \dVol[\partial \Omega]$ and $\mubar=\rhobar \dVol[\partial \Omega]$ be probability measures on $\partial \Omega$, with $\rho$ and $\rhobar$ bounded away from zero and infinity and assume $\Omega$ is $C^1$, bounded, and convex. Then if $\rho_0\in (0, \min(\essinf_{\partial\Omega}\rho,\essinf_{\partial\Omega}\rhobar))$ and 
\begin{align}\label{eqn: W2 small}
\W_2(\mu,\mubar)^{\frac{2}{n+2}} <\max\left(\frac{\conerad(\frac{35}{36})}{64C_{\rho_0,\Omega}}, \frac{\conerad(\frac{35}{36})}{16C_{\rho_0,\Omega}C_{\partial\Omega}L_{\pi_\Omega}^2}\right), 
\end{align}
there exists a unique Kantorovich solution $\gamma$ from $\mu$ to $\mubar$ and a $\mu$-a.e. defined map $T: \partial \Omega\to\partial\Omega$ such that $\gamma=(\Id\times T)_\#\mu$.
\end{thm}
\begin{thm}\label{thm: nonstrictly convex body}
Under the same assumptions as Theorem~\ref{thm: monge solution} above, the map $T$ is continuous on $\partial \Omega$.
\end{thm}

\begin{thm}\label{thm: Holder regularity}
Under the same assumptions as Theorem~\ref{thm: monge solution} above, if $\partial\Omega$ is $C^{1, \alpha}$-smooth for some $\alpha\in (0, 1]$, then $T\in C_{loc}^{0, \alpha'}(\partial \Omega; \partial\Omega)$ for some $\alpha'\in (0, 1]$.
\end{thm}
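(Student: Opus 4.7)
The plan is to combine the continuity of the Monge solution $T$ obtained in Theorem~\ref{thm: nonstrictly convex body} with the pointwise Aleksandrov-type estimates and H\"older regularity machinery developed in \cite{GuillenKitagawa17}. Since existence and continuity of $T$ are already in hand, only the quantitative bound $\abs{T(X)-T(Y)}\leq C\abs{X-Y}^{\alpha'}$ for some $\alpha'\in (0,1]$ depending on $\alpha$, $n$, and the density bounds remains to be established at each point of $\partial\Omega$.

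The first step would be to localize the problem. For each $X_0\in \partial\Omega$, using continuity of $T$, choose a small open neighborhood $U$ of $X_0$ whose image $T(U)$ sits inside a small neighborhood $\Vbar$ of $\Xbar_0:=T(X_0)$. On both $U$ and $\Vbar$ one can represent $\partial\Omega$ as the graph of a $C^{1,\alpha}$ function over the respective tangent hyperplanes at $X_0$ and $\Xbar_0$. In these graph coordinates, the ambient quadratic cost takes the form $\frac{1}{2}\paren{\abs{x-\bar x}^2 + \paren{f(x)-\bar f(\bar x)}^2}$ on flat domains in $\R^n$, which is $C^{1,\alpha}$ in each variable. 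The pushed-forward source and target measures remain absolutely continuous with densities bounded between two positive constants, since the Jacobian of a $C^1$ graph parametrization is uniformly bounded above and below.

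The next step is to verify the structural hypotheses of the H\"older regularity theorem of \cite{GuillenKitagawa17} for this localized problem. These typically require a quantitative c-convexity of the target domain in the c-coordinates induced by the localized cost, together with a quasi-convexity type structural condition on c-segments that plays the role of the MTW condition at low regularity. Convexity of $\Omega$ and the fact that $T(U)$ is concentrated near $\Xbar_0$ provide the needed target c-convexity, while the strict convexity of the ambient quadratic cost, combined with the graph structure, furnishes the nondegeneracy required in the $x$ and $\bar x$ directions.

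The main obstacle will be checking this quasi-convexity condition on c-segments when the underlying boundary is merely $C^{1,\alpha}$. Unlike the classical MTW framework, which demands $C^4$ smoothness, the iteration scheme in \cite{GuillenKitagawa17} requires only a qualitative form of the structural condition, with the resulting H\"older exponent degrading with its constants. A geometric analysis using the $C^{1,\alpha}$ graph parametrization, with constants uniform in $X_0$ (obtained by a compactness argument on $\partial\Omega$), should supply these bounds. Once they are established, the cited regularity theorem yields a local H\"older estimate for $T$ near $X_0$, and since $X_0$ is arbitrary this gives $T\in C^{0,\alpha'}_{\mathrm{loc}}(\partial\Omega;\partial\Omega)$, as desired.
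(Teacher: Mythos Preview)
Your proposal is on the right track in that it ultimately appeals to the H\"older machinery of \cite{GuillenKitagawa17}, but it takes a longer detour than the paper and misidentifies where the difficulty lies.

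The paper does \emph{not} pass to graph coordinates and re-verify the structural hypotheses of \cite{GuillenKitagawa17} from scratch. Instead, it observes that the two key ingredients needed for the iteration in \cite[Section~8]{GuillenKitagawa17}---namely the (QQConv) inequality and the localized lower and upper Aleksandrov estimates---have \emph{already} been proved in this paper (Proposition~\ref{prop: qqconv holds}, Theorems~\ref{thm: lower aleksandrov} and~\ref{thm: upper aleksandrov}), and that these results required only $C^1$ regularity of $\partial\Omega$. Feeding those estimates into the argument of \cite[Section~8]{GuillenKitagawa17} yields, for every $X_0\in\partial\Omega$, a pointwise bound
\[
\lvert u(X)+c(X,\Xbar_0)+h\rvert \leq C\, d_{\partial\Omega}(X,X_0)^{1+\beta}
\]
for some $\beta\in(0,1]$. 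The $C^{1,\alpha}$ hypothesis on $\partial\Omega$ enters only at the very last step: it is what allows this pointwise $(1+\beta)$-order contact estimate on the potential $u$ to be upgraded to genuine $C^{1,\min(\alpha,\beta)}$ regularity of $u$ on $\partial\Omega$, hence H\"older continuity of $T(X)=\cExp{X}{\nabla^\Omega u(X)}$.

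By contrast, you propose to write the cost in two separate graph charts and then verify nondegeneracy, target $c$-convexity, and the quasi-convexity condition for this new cost. This is not wrong in principle, but it duplicates Sections~\ref{section: qqconv}--\ref{section: aleksandrov}, and your stated ``main obstacle''---checking (QQConv) when the boundary is merely $C^{1,\alpha}$---is not an obstacle at all: Proposition~\ref{prop: qqconv holds} already establishes (QQConv) assuming only $C^1$. The genuine role of the $C^{1,\alpha}$ assumption is the final passage from the $(1+\beta)$-order touching estimate on $u$ to H\"older continuity of its tangential gradient, a point your outline does not isolate.
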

The approach we follow is fundamentally different from that of  \cite{KitagawaWarren12} in the sphere case. The proofs in \cite{KitagawaWarren12} rely on explicit computations along with the uniform convexity of the sphere, and also utilize a continuity method for the Monge-Amp{\`e}re PDE which is possible because the measures have smooth densities. The setting of the current paper is on arbitrary convex domains, with no uniform or strict convexity assumption, and the transported measures may have discontinuous densities.

\subsection{Literature and applications}
The problem of optimal transport involving measures supported on submanifolds is a problem of natural interest, not covered by the classical Brenier-Gangbo-McCann theory of optimal transport (as in \cite{Brenier91, GangboMcCann96, McCann01}), which as noted by Lott in \cite{Lott17} has suffered from an almost criminal lack of attention. Aside from the above work of Gangbo and McCann (\cite{GangboMcCann00}) and the second author with Warren (\cite{KitagawaWarren12}), there are only a handful of theoretical results on the problem, such as a result by McCann and Sosio (\cite{McCannSosio11}) showing that Kantorovich solutions are supported on the union of the graphs of two H\"older continuous maps. There is a discussion of the corresponding problem on boundaries of a domain when the cost function is the ambient distance, instead of squared distance in \cite{DweikSantambrogio19} (see also the discussion of applications below). Additionally, the papers \cite{ChiapporiMcCannPass19, McCannPass20} contain a treatment of certain cases when the submanifolds are of differing dimensions.

Aside from purely mathematical interest, optimal transport between measures on submanifolds appears in a number of important applications; we note a few here and do not claim this is an exhaustive recollection. In \cite{Fry93}, a such an optimal transport problem is proposed as a method of shape classification. It is shown in \cite{GornyRybkaSabra17} that in the plane, the least gradient problem of minimizing the $L^1$ norm of the gradient of functions with a fixed trace on some domain is equivalent to a Beckman problem. This Beckman problem, in turn, is equivalent to an optimal transport problem between measures supported on the boundary of the domain, where the cost function is the Euclidean distance itself, which is analyzed in \cite{DweikSantambrogio19}. Interestingly, from \cite[Proposition 2.6]{DweikSantambrogio19}, one obtains existence of Monge solutions when the cost is the ambient distance under relatively mild conditions. This is a rare case when the problem with cost function given by squared distance tends to exhibit \emph{worse} behavior than just the distance, as one can construct examples in  the squared distance case where the conclusion of \cite[Proposition 2.6]{DweikSantambrogio19} does not hold. In this least gradient problem, there is a marked difference between the case of strictly convex boundary versus non-strictly convex boundary, see for example \cite{RybkaSabra22, Gorny23}. Finally, there has been success in deriving functional and geometric inequalities (such as the sharp Michael-Simons-Sobolev inequality in codimension 2) via optimal transport techniques, involving measures supported on submanifolds, see \cite{Castillon10, BrendleEichmair23, WangKaiHsiang24}.
\subsection{Outline}
The rough plan of the paper is as follows. First we show that the analogue of condition (QQConv) from \cite{GuillenKitagawa15} holds for pairs of points ``on the same side'' (i.e. $(X, Y)$ such that $Y\in \partial\Omega^+(X)$, see \eqref{eqn: same side def}). (QQConv) is an analogue of the Ma-Trudinger-Wang condition introduced in \cite{MaTrudingerWang05}, which is crucial to regularity theory of the Monge-Amp{\`e}re equation in optimal transport. However because the domain $\partial\Omega$ itself only has $C^1$ regularity, the theory in the lower regularity cases of \cite{GuillenKitagawa15} and \cite{GuillenKitagawa17} play a critical role in this paper. With this condition in hand, we show localized versions of the Aleksandrov estimates which are central to the regularity theory of the classical Monge-Amp{\`e}re equation built up by Caffarelli in \cite{Caffarelli90, Caffarelli91, Caffarelli92}; versions of these estimates have also been shown in \cite{FigalliKimMcCann13, Vetois15} under the (weak) Ma-Trudinger-Wang condition, and under the (QQConv) condition in \cite{GuillenKitagawa15, GuillenKitagawa17}. In order to apply these estimates, we show that a distinguished dual potential function exists, with a key ``stay-away estimate'' which shows that we can control the horizontal distance between points that are optimally coupled in terms of the optimal transport cost $\W_2(\mu,\mubar)$. However, this estimate only applies to pairs of points that lie in the same side, hence to exploit the estimate we must first take a sequence of uniformly convex bodies which approximate our body in Hausdorff distance. This procedure is necessary because the theory of \cite{GangboMcCann00} guarantees the existence of optimally coupled points on the same side but only applies to dual potentials in the uniformly convex case; the quantitative control from our stay-away estimate is stable enough to then say something about the limiting (not necessarily uniformly convex) case. Here it is critical that our stay-away estimate relies only on quantities that are $C^1$ in nature, does not rely on $C^2$ or uniformly convex quantities such as curvature bounds, and in particular is stable under Hausdorff convergence. Thus when the optimal transport cost $\W_2(\mu,\mubar)$ is small enough, we can ensure existence of a dual potential with good properties to which we can apply our localized Aleksandrov estimates, which in turn let us show single-valuedness and regularity of the transport map.

 In Section~\ref{section: preliminary} we recall some definitions related to $c$-convex functions and show some geometric properties of convex bodies. In Section~\ref{section: qqconv} we show that the (QQConv) condition holds between certain points and we obtain nice convexity properties of sections of $c$-convex functions. In Section~\ref{section: aleksandrov} we show the aforementioned localized versions of the Aleksandrov estimates. Section~\ref{section: approximation} gives the key stay-away estimate Proposition~\ref{prop: stay away} along with an approximation procedure, we also show that the dual potential constructed in this way possesses some desirable structure. In the final Section~\ref{section: main proof} we tie everything together and present the proof of our main results.

\section{Preliminaries}\label{section: preliminary}
\subsection{Notation and $c$-convexity}
Throughout this paper, \textbf{$\Omega$ will always be a compact, convex body with $C^1$ boundary, which contains the origin in its interior}, except when explicitly stated otherwise (which will only occur in Example~\ref{ex: non C1}). The notation $B^n_r(x)$ will denote the open ball in $\R^n$ of radius $r>0$ centered at the point $x$.  We will also denote by $\nabla^n$ the Euclidean gradient of a function on $\R^n$, and when the boundary of $\Omega$ is at least $C^1$-smooth, $\nabla^\Omega$ will denote the tangential gradient of a function on $\partial \Omega$. The notation $\inner{\cdot}{\cdot}$ and $\abs{\cdot}$ will denote the Euclidean inner product and norm, in what dimension will be clear from context; we will generally use capital letters to denote points in $\R^{n+1}$ and lower case letters to denote points in $\R^n$. When we write $\essinf$ and $\esssup$, these will be the essential infimum and supremum with respect to the $n$-dimensional Hausdorff measure (on which set will be clear from context). Finally for a set $A\subset \R^{n+1}$,  $X\in \R^{n+1}$, and $r>0$ we write
\begin{align*}
    \dist{X}{A}:&=\inf_{Y\in A}\abs{X-Y}.
\end{align*}
Also note since $\Omega$ contains the origin in its interior, there exist constants $0<\inrad\leq \outrad$ such that 
\begin{align}\label{eqn: uniform balls}
    B^{n+1}_{\inrad}(0)\Subset \Omega\Subset B^{n+1}_{\outrad}(0).
\end{align}
For any convex set $\Lambda\subset \R^{n+1}$ and point $X\in \partial \Lambda$ we write 
\begin{align*}
    \normal{\Lambda}{X}:=\{V\in \S^{n}\mid \inner{V}{Y-X}\leq 0,\ \forall Y\in \Lambda\}
\end{align*}
for the set of outward unit normal vectors to $\Lambda$ at $X$. When $\Lambda$ has $C^1$ boundary, then there is a unique outward unit normal vector at every point, and we can view $X\mapsto \mathcal{N}_{\Lambda}(X)$ as a single valued, (uniformly if $\Lambda$ is compact) continuous map. Since $\Omega$ has $C^1$ boundary, for $\Theta\geq 0$ and $X_0\in \partial \Omega$ we can define the (relatively open) sets
\begin{equation}\label{eqn: same side def}
\partial \Omega^{\pm}_\Theta (X_0) = \{ X \in \partial \Omega \mid \pm \inner{\mathcal{N}_{\Omega}(X)}{\mathcal{N}_{\Omega}(X_0)}> \Theta\},
\end{equation}
if $\Theta=0$ the subscript will be omitted. 
We also denote by $\Pi^\Omega_X$ the plane tangent to $\Omega$ at $X\in\partial\Omega$, and by $\proj^\Omega_X$ orthogonal projection onto $\Pi^\Omega_X$. Also for $X\in\partial\Omega$, we define \begin{align*}
    \Omega_{X}:&=\proj^\Omega_X\left(\partial \Omega^+(X)\right).
\end{align*}
Intuitively, $\partial\Omega^+(X)$ is the collection of points ``on the same side of $\partial \Omega$ as $X$'', as measured via the angle between outward normal vectors, while $\partial\Omega^+_\Theta(X)$ is a quantitative shrinking of $\partial\Omega^+(X)$. For the purposes of our results, it will be convenient to write $\partial\Omega^+(X)$ as the graph of some function over the projected $\Omega_X$ in the tangent plane to $\partial\Omega$ at $X$. We give an explicit function $\beta_X$ and prove some properties of the function along with $\Omega_X$ in the next lemma.
\begin{lem}\label{lem: basic properties}
    The set $\Omega_X$ is open, bounded, and convex. Moreover, defining $\beta_X: \Omega_X\to \R$ by 
    \begin{align*}
    \beta_X(x):=-\dist{(\proj_X^\Omega)^{-1}(x)\cap \partial\Omega^+(X)}{\Pi_X^\Omega},
\end{align*}
we can write $\partial\Omega^+(X)$ (after an appropriate rotation) as the graph of $\beta_X$ over $\Omega_X$. Additionally, $\beta_X\in C^1(\Omega_X)$ and is a concave function.
\end{lem}
\begin{proof}
    $\Omega_X$ is bounded from the compactness of $\Omega$. 
    
    %%%%%%%%%%%%%%%%%
    For openness, suppose by contradiction that $y\in \Omega_X$ (hence $\proj_X^\Omega(Y)=y$ for some $Y\in \partial\Omega^+(X)$) but there exists a sequence $\{y_k\}_{k=1}^\infty\subset \Pi^\Omega_X\setminus\Omega_X$ converging to $y$. First suppose there exists a subsequence of $\{y_k\}_{k=1}^\infty$ (not relabeled) such that $(\proj_X^\Omega)^{-1}(y_k)\cap \Omega^\circ=\emptyset$ for all $k$, then taking a limit we see that $(\proj_X^\Omega)^{-1}(y)\cap \Omega^\circ=\emptyset$. Since $\Omega^\circ$ is open and convex, by the separation theorem~\cite[Theorem 11.2]{Rockafellar70} there is a hyperplane which contains $(\proj_X^\Omega)^{-1}(y)$ and is supporting to $\Omega$. In particular it contains $Y$, hence the hyperplane is normal to $\mathcal{N}_\Omega(Y)$. However this implies $\inner{\mathcal{N}_\Omega(Y)}{\mathcal{N}_\Omega(X)}=0$, a contradiction. 
    This implies there must be a subsequence (again not relabeled) such that  $ (\proj_X^\Omega)^{-1}(y_k)\cap\Omega^\circ\neq \emptyset$ but $(\proj_X^\Omega)^{-1}(y_k)\cap \partial\Omega^+(X)=\emptyset$ for all $k$. By compactness of $\Omega$, there exist points $Y^{\max}_k$ and $Y^{\min}_k$, necessarily in $\partial\Omega$, which achieve the maximum and minimum values, respectively, of the function $\inner{\cdot}{\mathcal{N}_\Omega(X)}$ over the set $(\proj_X^\Omega)^{-1}(y_k)\cap\Omega$; by assumption $Y^{\max}_k$, $Y^{\min}_k\not\in \partial\Omega^+(X)$. We may pass to another subsequence and assume $Y^{\max}_k$ and $Y^{\min}_k$ converge to some $Y^{\max}$ and $Y^{\min}\in \partial \Omega$ respectively, by continuity of $\mathcal{N}_\Omega$ we have $\inner{\mathcal{N}_\Omega(X)}{\mathcal{N}_\Omega(Y^{\max})}$, $\inner{\mathcal{N}_\Omega(X)}{\mathcal{N}_\Omega(Y^{\min})}\leq 0$ while $Y^{\max}=Y^{\min}+\lambda\mathcal{N}_\Omega(X)$ for some $\lambda\in \R$. We can then calculate
    \begin{align*}
        0&\geq \inner{Y^{\max}-Y^{\min}}{\mathcal{N}_\Omega(Y^{\min})}
        =\lambda\inner{\mathcal{N}_\Omega(X)}{\mathcal{N}_\Omega(Y^{\min})},\\
        0&\geq \inner{Y^{\min}-Y^{\max}}{\mathcal{N}_\Omega(Y^{\max})}
        =-\lambda\inner{\mathcal{N}_\Omega(X)}{\mathcal{N}_\Omega(Y^{\max})}.
    \end{align*}
    If $\inner{\mathcal{N}_\Omega(X)}{\mathcal{N}_\Omega(Y^{\max})}=0$, the hyperplane through $Y^{\max}$ normal to $\mathcal{N}_\Omega(Y^{\max})$ supporting to $\Omega$ contains all of $(\proj_X^\Omega)^{-1}(y)$, hence $Y$, which implies $\mathcal{N}_\Omega(Y)=\mathcal{N}_\Omega(Y^{\max})$, however this contradicts that $Y\in \partial\Omega^+(X)$. Similarly we find $\inner{\mathcal{N}_\Omega(X)}{\mathcal{N}_\Omega(Y^{\max})}\neq 0$, which implies $\lambda=0$, hence $Y^{\max}=Y^{\min}=Y$, contradicting that $Y\in \partial\Omega^+(X)$. Thus $\Omega_X$ must be open.
    
%%%%%%%%%%%%%%%%%%%%%%%%%%%%%%%%%%
   To see the convexity, let $x_0:=\proj_X^\Omega(X_0)$, $x_1:=\proj_X^\Omega(X_1)\in \Omega_X$ with $X_0$, $X_1\in \partial\Omega^+(X)$, and for $t\in (0, 1)$ define $x_t:=(1-t)x_0+tx_1$; note $x_t=\proj_X^\Omega(X_t)$ where $X_t:=(1-t)X_0+tX_1$. By convexity of $\Omega$ we have $X_t\in \Omega$; if $X_t\in \partial\Omega$ the segment between $X_0$ and $X_1$ lies in $\partial \Omega$, using a separation theorem again as above we can conclude the supporting hyperplane to $\Omega$ at $X_t$ also contains $X_0$, hence $\normal{\Omega}{X_t}=\normal{\Omega}{X_0}$. If $X_t$ is in the interior of $\Omega$, there is $\lambda>0$ such that $\hat{X}_t:=X_t+\lambda\normal{\Omega}{X}\in \partial \Omega$, and $r>0$ such that $X_t+r\normal{\Omega}{\hat{X}_t}\in \Omega$. Then 
\begin{align*}
0\geq \inner{X_t+r\normal{\Omega}{\hat{X}_t}-\hat{X}_t}{\normal{\Omega}{\hat{X}_t}}=\inner{r\normal{\Omega}{\hat{X}_t}-\lambda\normal{\Omega}{X}}{\normal{\Omega}{\hat{X}_t}}   
\end{align*}
which after rearranging yields $\inner{\normal{\Omega}{X}}{\normal{\Omega}{\hat{X}_t}}\geq r/\lambda>0$. Thus in both cases, $x_t\in \Omega_X$, proving convexity.

Next, note that if $Y_1$, $Y_2\in \partial\Omega^+(X)$ with $Y_1=Y_2+\lambda \normal{\Omega}{X}$ for some $\lambda>0$, we would have
\begin{align*}
    0\geq \inner{ Y_1-Y_2}{\normal{\Omega}{Y_2}}=\lambda \inner{\normal{\Omega}{X}}{\normal{\Omega}{Y_2}},
\end{align*}
which is a contradiction, meaning that $\proj_X^\Omega$ is injective on $\partial\Omega^+(X)$. Thus if we make a rotation so that $\normal{\Omega}{X}=e_{n+1}$ and a translation to identify $\Pi_X^\Omega$ with $\R^n$, we find that $\partial\Omega^+(X)$ is the graph of $\beta_X$ over $\Omega_X$. The $C^1$ regularity of $\partial\Omega$ immediately implies $\beta_X\in C^1(\Omega_X)$, while convexity of $\Omega$ (by the way we orient the outward unit normal) implies $\beta_X$ is a concave function of $n$ variables over $\Omega_X$.
\end{proof}

Also following \cite{GangboMcCann00}, we adopt the following terminology. Note that both notions of convexity below do not require any differentiability of $\partial\Omega$.
\begin{defin}
    A convex set $\Omega$ is said to be \emph{strictly convex} if the intersection of any line segment and $\partial\Omega$ is at most two points. $\Omega$ is \emph{uniformly convex} if there exists $R>0$ such that $\Omega\subset B^{n+1}_R(X-R\normal{\Omega}{X})$ for any $X\in \partial \Omega$ and any outward unit normal vector $\normal{\Omega}{X}$ at $X$.
\end{defin}

Next we recall some notions central to the theory of optimal transport. We comment here, although the cost function $c$ is symmetric in its variables, throughout the paper the reader should think of variables with a bar above as belonging to the ``target domain'' while variables without belong to the ``source domain.''
\begin{defin}\label{def: c-convex}
    For a function $u: B^{n+1}_{\outrad}(0)\to \R\cup \{\infty\}$, its \emph{$c$-transform} is the function $u^c: B^{n+1}_{\outrad}(0)\to \R\cup\{\infty\}$ defined by 
    \begin{align*}
        u^c(\Xbar):=\sup_{X\in B^{n+1}_{\outrad}(0)}(-c(X, \Xbar)-u(X)).
    \end{align*}
    If $u=v^c\not\equiv \infty$ for some function $v: B^{n+1}_{\outrad}(0)\to \R\cup \{\infty\}$, we say that $u$ is a \emph{$c$-convex function}.
\end{defin}
Since $c$ is symmetric in its variables, we do not make a distinction between the $c$-transform and the so-called $c^*$-transform as is done customarily.

\begin{defin}\label{def: local subdiff}
    Suppose $\Omega\subset B^{n+1}_{\outrad}(0)$. 
    If $u: B^{n+1}_{\outrad}(0)\to \R\cup \{\infty\}$ is $c$-convex, then the \emph{$c$-subdifferential (with respect to $\Omega$)} of $u$ at $X\in \partial \Omega$ is defined by
    \begin{align*}
        \csubdiff[\Omega]{u}{X} :=\{\Xbar\in\partial\Omega\mid u(Y)\geq -c(Y, \Xbar)+c(X, \Xbar)+u(X),\ \forall Y\in \partial \Omega\}.
    \end{align*}    
    If $u: B^{n+1}_{\outrad}(0)\to \R\cup \{\infty\}$ is a convex function, then its \emph{subdifferential} at $X\in B^{n+1}_{\outrad}(0)$ is defined by
    \begin{align*}
        \subdiff{u}{X}:=\{P\in \R^{n+1}\mid u(Y)\geq u(X)+\inner{Y-X}{P},\ \forall Y\in B^{n+1}_{\outrad}(0)\}.
    \end{align*}
    We also write 
    \begin{align*}
        \partial_c^\Omega u:&=\{(X, \Xbar)\in \partial \Omega\times \partial \Omega\mid X\in \partial \Omega,\ \Xbar\in \csubdiff[\Omega]{u}{X}\}.
    \end{align*}
\end{defin}

\begin{rmk}\label{rmk: subdifferentials}
Note that if $u=v^c$ is $c$-convex, then
\begin{align*}
    \tilde u(X):=u(X)+\frac{\lvert X\rvert^2}{2}=\sup_{Y\in B^{n+1}_{\outrad}(0)}\left(\inner{X}{Y}-\frac{\lvert Y\rvert^2}{2}-v(Y)\right)
\end{align*}
is seen to be a $\R$-valued, convex function, with
\begin{align*}
   \csubdiff[\Omega]{u}{X}=\partial\Omega\cap \subdiff{\tilde u}{X}
\end{align*}
for $X\in \partial \Omega$. In particular, this shows that a $c$-convex function is Lipschitz. Also (where we abusively identify a tangent vector to the  embedded manifold $\partial \Omega$  with a vector in the ambient space $\R^{n+1}$)
\begin{align*}
    -\nabla^\Omega_Xc(X, \Xbar)=\proj^\Omega_X(\Xbar-X),
\end{align*}
 thus if $\Xbar\in \csubdiff[\Omega]{u}{X}$ and $u$ is tangentially differentiable at $X$ it is easy to see that $\proj^\Omega_X(\Xbar-X)=\nabla^\Omega u(X)$. 
\end{rmk}
%%%%%%%%%%%%%%%%%%%%
\begin{rmk}\label{rmk: kantorovich duality}
By the classical Kantorovich duality (\cite[Theorem 5.10 (iii)]{Villani09}), in our current setting
\begin{align}\label{eqn: kantorovich dual}
    \W_2(\mu, \mubar)^2=\max\left\{-\int_{\partial\Omega} ud\mu-\int_{\partial\Omega}u^cd\mubar\mid u\text{ is }c\text{-convex}\right\}.
\end{align}
\end{rmk}

\subsection{Uniform estimates on convexity}
In Section~\ref{section: approximation}, we will approximate the convex body $\Omega$ by a sequence of uniformly convex bodies in Hausdorff distance. As such, we require control on some quantities which will be uniform along this sequence.

By \cite[Lemma A.1]{GangboMcCann00}, for any point in the boundary of a convex body, there is a sufficiently small ball around the point whose intersection with the body is the epigraph of some convex function over a supporting tangent plane of the boundary. Here, we show that if the boundary is $C^1$, then the radius can be taken to be independent of the center point. Note this radius cannot be chosen uniformly if $\partial \Omega$ is not $C^1$, as can be seen for the domain $\Omega$ in Example~\ref{ex: non C1} below: indeed as $X$ approaches the singular set, the radius $R$ must shrink to $0$.
\begin{defin}
    For a $C^1$ convex body $\Omega$, we denote by $\omega_\Omega: [0, \infty)\to [0, \infty)$ the \emph{modulus of continuity of $\mathcal{N}_{\Omega}$}, that is
    \begin{align*}
        \omega_\Omega(r):=\sup_{\abs{X_1-X_2}<r}\abs{\normal{\Omega}{X_1}-\normal{\Omega}{X_2}}.
    \end{align*}
\end{defin}

\begin{rmk}
Since $\Omega$ is assumed to be compact, if it is $C^1$ then $\mathcal{N}_\Omega$ is a uniformly continuous map from $\partial\Omega$ to $\S^n$ (with respect to the relative topology on $\partial\Omega$). Thus $\omega_\Omega(r)\to 0$ as $r\searrow 0$.
\end{rmk}
%%%%%%%%%%%%%%%
\begin{lem}
\label{lem: interior cone}
Suppose $\Omega$ is a convex body with $C^1$ boundary, $\Theta\in [0, 1)$ and $\conerad(\Theta)>0$ is such that 
\begin{align}\label{eqn: modulus < 1}
    \omega_\Omega(\conerad(\Theta))<\sqrt{2-2\Theta}.
\end{align}
Then for any $X_0 \in \partial \Omega$, 
\begin{align}\label{eqn: positive inner product in ball}
     B^{n+1}_{\conerad(\Theta)}(X_0) \cap \partial\Omega\subset \partial\Omega^+_\Theta(X_0),
\end{align}
and 
\begin{equation}\label{eqn: cone inclusion holds}
\mathcal{K}_{\Omega, \Theta}(X_0) := \left\{ X\in\R^{n+1} \mid \inner{X- X_0}{-\mathcal{N}_{\Omega}(X_0)} \geq \sqrt{1-\Theta^2} |X-X_0| \right\} \cap B^{n+1}_{\conerad(\Theta)}(X_0) \subset \Omega.
\end{equation}
\end{lem}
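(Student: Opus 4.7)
The plan is to prove the two inclusions in order, with the first serving as input to the second.

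For \eqref{eqn: positive inner product in ball}, given any $X \in B^{n+1}_{\conerad(\Theta)}(X_0) \cap \partial\Omega$, monotonicity of $\omega_\Omega$ together with \eqref{eqn: modulus < 1} gives $|\mathcal{N}_\Omega(X) - \mathcal{N}_\Omega(X_0)| \leq \omega_\Omega(\conerad(\Theta)) < \sqrt{2-2\Theta}$; squaring and using that both vectors are unit rearranges to $\langle \mathcal{N}_\Omega(X), \mathcal{N}_\Omega(X_0)\rangle > \Theta$, which is the required membership in $\partial\Omega^+_\Theta(X_0)$.

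For \eqref{eqn: cone inclusion holds} I would argue by contradiction. Suppose $X \in \mathcal{K}_{\Omega, \Theta}(X_0) \cap B^{n+1}_{\conerad(\Theta)}(X_0) \setminus \Omega$ and set $v := (X - X_0)/|X - X_0|$. Since $X_0 \in \Omega$, the set $\{t \in [0, |X-X_0|] : X_0 + tv \in \Omega\}$ is closed (because $\Omega$ is) and convex (again because $\Omega$ is), hence equals $[0, t^*]$ for some $t^* < |X - X_0|$; let $Y^* := X_0 + t^* v \in \partial\Omega$, which satisfies $|Y^* - X_0| \leq |X - X_0| < \conerad(\Theta)$. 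Writing $\partial\Omega$ near $Y^*$ as a $C^1$ graph over $\Pi^\Omega_{Y^*}$ with gradient vanishing at $Y^*$, any direction $v$ with $\langle v, \mathcal{N}_\Omega(Y^*)\rangle < 0$ would force $Y^* + sv \in \interior(\Omega)$ for all sufficiently small $s > 0$, contradicting the maximality of $t^*$; therefore $\langle v, \mathcal{N}_\Omega(Y^*)\rangle \geq 0$.

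To close the contradiction I would run an angle estimate on $\S^n$. The cone condition makes the (spherical) angle $\alpha$ between $v$ and $\mathcal{N}_\Omega(X_0)$ satisfy $\alpha \geq \pi - \arcsin \Theta$, while \eqref{eqn: positive inner product in ball} applied at $Y^*$ forces the angle $\beta$ between $\mathcal{N}_\Omega(Y^*)$ and $\mathcal{N}_\Omega(X_0)$ to be strictly less than $\arccos \Theta = \pi/2 - \arcsin \Theta$. The spherical triangle inequality then gives that the angle between $v$ and $\mathcal{N}_\Omega(Y^*)$ is at least $\alpha - \beta > \pi/2$, so $\langle v, \mathcal{N}_\Omega(Y^*)\rangle < 0$, contradicting the previous paragraph. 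The main subtlety is that a naive Cauchy--Schwarz bound on $\langle v, \mathcal{N}_\Omega(Y^*) - \mathcal{N}_\Omega(X_0)\rangle$ via $|\mathcal{N}_\Omega(Y^*) - \mathcal{N}_\Omega(X_0)| < \sqrt{2-2\Theta}$ is too lossy to close the estimate, and one really needs to exploit that both normals are unit vectors via the spherical angle argument (equivalently, a parallel--perpendicular decomposition relative to $\mathcal{N}_\Omega(X_0)$).
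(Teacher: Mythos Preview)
Your proof of \eqref{eqn: positive inner product in ball} is identical to the paper's.

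Your argument for \eqref{eqn: cone inclusion holds} is correct but genuinely different from the paper's. The paper proceeds directly: it writes the convex set $B^{n+1}_{\conerad}(X_0)\cap\Omega$ as an intersection of supporting half-spaces, drops the half-spaces coming from $\partial B^{n+1}_{\conerad}(X_0)$, and then replaces each remaining half-space $\{X:\inner{X-Y}{\normal{\Omega}{Y}}\leq 0\}$ by the smaller $\{X:\inner{X-X_0}{\normal{\Omega}{Y}}\leq 0\}$ (using convexity). Since by \eqref{eqn: modulus < 1} every such $\normal{\Omega}{Y}$ lies in the spherical cap $\{V\in\S^n:\lvert V-\normal{\Omega}{X_0}\rvert\leq\sqrt{2-2\Theta}\}$, one is left with the intersection over all $V$ in that cap of the half-spaces $\{X:\inner{X-X_0}{V}\leq 0\}$, and a short computation identifies this intersection (within $B^{n+1}_{\conerad}(X_0)$) with the cone $\mathcal{K}_{\Omega,\Theta}(X_0)$. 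Your route---finding the exit point $Y^*$ on the segment, observing $\inner{v}{\normal{\Omega}{Y^*}}\geq 0$ from the $C^1$ graph structure, and closing via the spherical reverse triangle inequality---is a clean alternative that avoids the half-space decomposition entirely. Your remark that naive Cauchy--Schwarz on $\inner{v}{\normal{\Omega}{Y^*}-\normal{\Omega}{X_0}}$ falls short (since $\sqrt{2-2\Theta}>\sqrt{1-\Theta^2}$ for $\Theta<1$) is accurate, and the spherical-angle refinement is exactly what both proofs exploit, yours explicitly and the paper's implicitly in the cap computation.
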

\begin{proof}
Since $\Theta$ is fixed, let us write $\conerad$ for $\conerad(\Theta)$. By the choice of $\conerad$, for any $X_0\in \partial \Omega$ and $X\in B^{n+1}_{\conerad}(X_0)\cap \partial\Omega$, we see $\inner{\normal{\Omega}{X}}{\normal{\Omega}{X_0}}=1-\frac{\abs{\normal{\Omega}{X}-\normal{\Omega}{X_0}}^2}{2}> \Theta$, which proves \eqref{eqn: positive inner product in ball}.

Since $B^{n+1}_{\conerad}(X_0) \cap \Omega$ is convex with the origin in the interior, using \cite[Theorem 2.2.6]{Schneider14},
\begin{align*}
\Omega&\supset B^{n+1}_{\conerad}(X_0) \cap \Omega  
=\bigcap_{Y\in \partial\Omega\cap  B^{n+1}_{\conerad}(X_0)^{\circ}}\{ X \mid \inner{X - Y}{\normal{\Omega}{Y}} \leq 0\}\\
&\cap \bigcap_{Y\in \partial B^{n+1}_{\conerad}(X_0) \cap \Omega^{\circ}}\{ X \mid \inner{X - Y}{\normal{B^{n+1}_{\conerad}(X_0)}{Y}} \leq 0\}\cap B^{n+1}_{\conerad}(X_0)\\
&\supset \bigcap_{Y\in \partial\Omega\cap  B^{n+1}_{\conerad}(X_0)}\{ X \mid \inner{X - Y}{\normal{\Omega}{Y}} \leq 0\}\cap B^{n+1}_{\conerad}(X_0)\\
& \supset \bigcap_{Y\in \partial\Omega\cap  B^{n+1}_{\conerad}(X_0)}\{ X \mid \inner{X - X_0}{\normal{\Omega}{Y}} \leq 0\}\cap B^{n+1}_{\conerad}(X_0).
\end{align*}
To obtain the second line above, by \cite[Theorem 2.2.1 (b)]{Schneider14} we have $\R\normal{\Omega\cap B^{n+1}_{\conerad}(X_0)}{Y}=\R\normal{\Omega}{Y}+\R\normal{B^{n+1}_{\conerad}(X_0)}{Y}$ for $Y\in \partial(\Omega\cap B^{n+1}_{\conerad}(X_0))$. Then simply note that $\max(\inner{X-Y}{\normal{\Omega}{Y}}, \inner{X-Y}{\normal{B^{n+1}_{\conerad}(X_0)}{Y}})\leq 0$ implies that $\inner{X-Y}{(1-t)\normal{\Omega}{Y}+t\normal{B^{n+1}_{\conerad}(X_0)}{Y}}\leq 0$, hence 
\begin{align*}
    &\{ X \mid \inner{X - Y}{\normal{\Omega}{Y}} \leq 0\}\cap \{ X \mid \inner{X - Y}{\normal{B^{n+1}_{\conerad}(X_0)}{Y}} \leq 0\}\\
    &= \bigcap_{N\in \normal{\Omega\cap B^{n+1}_{\conerad}(X_0)}{Y}}\{ X \mid \inner{X - Y}{N} \leq 0\}.
\end{align*}
Now using \eqref{eqn: modulus < 1},
\begin{align*}
&\bigcap_{Y\in \partial\Omega\cap  B^{n+1}_{\conerad}(X_0)}\{ X \mid \inner{X - X_0}{\normal{\Omega}{Y}} \leq 0\}\cap B^{n+1}_{\conerad}(X_0)\notag\\
& \supset \bigcap_{\{V \in \mathbb{S}^n\mid  \abs{V-\mathcal{N}_{\Omega}(X_0)} \leq \sqrt{2-2\Theta}\}}\{ X \mid \inner{X-X_0}{V} \leq 0\} \cap B^{n+1}_{\conerad}(X_0) \notag\\
& = \left\{ X \mid \inner{X-X_0}{\mathcal{N}_{\Omega}(X_0)}\leq - \sqrt{1-\Theta^2}\abs{X-X_0} \right\} \cap B^{n+1}_{\conerad}(X_0),
\end{align*}
proving \eqref{eqn: cone inclusion holds}.
\end{proof}

\begin{rmk}
\label{rmk: uni-sized nbhd in proj}
    Lemma~\ref{lem: interior cone} implies, for example, that
    \begin{equation*}
        B^n_{\frac{\conerad(\frac{1}{2})}{2}}(\proj_{X_0}^\Omega (X_0))\subset\proj_{X_0}^\Omega\left(\mathcal{K}_{\Omega, \frac{1}{2}}(X_0)\right) \subset \Omega_{X_0},
    \end{equation*}
    indeed, the second inclusion follows directly by projection of the inclusion~\eqref{eqn: cone inclusion holds} onto the tangent plane of $\partial\Omega$ at $X_0$. To obtain the first inclusion, first make a rotation and translation to assume $\mathcal{N}_{\Omega}(X_0)=e_{n+1}$, $X_0=0$, then for any $(x, 0)\in B^n_{\frac{\conerad(\frac{1}{2})}{2}}(\proj_{X_0}^\Omega (X_0))$ we can calculate that $(x, -\sqrt{3}\lvert x\rvert)\in \mathcal{K}_{\Omega, \frac{1}{2}}(X_0)$. In particular,
     $\Omega_{X_0}$ always contains a uniform radius ball around $\proj_{X_0}^\Omega (X_0)$. Note that this is not true without $C^1$-ness of $\partial\Omega$.
    
     Note also that by the definition of $\conerad$, we may assume it is decreasing as a function of $\Theta$.
\end{rmk}
Next we show two Lipschitz estimates on $\beta_X$. The first shows that $\beta_X$ has a Lipschitz estimate as long as one of the points involved is close to the ``center'' of $\Omega_X$ (i.e. to $\proj^\Omega_X(X)$).
\begin{lem}
\label{lem: unif size nbhd lip beta}
Suppose $\Omega$ is a convex body with $C^1$ boundary and fix $\Theta\in [\frac{1}{2}, 1)$. Also fix $X_0 \in \partial \Omega$, choose coordinates such that $\normal{\Omega}{X_0}=e_{n+1}$, and write $x_0:=\proj_{X_0}^\Omega(X_0)$. Let $\beta_X: \Omega_{X_0} \to \R$ be the concave function whose graph is $\partial \Omega^+(X_0)$. Then, for any $x_1 \in B^n_{\frac{\conerad(\Theta)}{4}}(x_0)$, we have
\begin{equation}
\label{eqn: Lip eqn beta}
\lvert \beta_{X_0} (x_1) - \beta_{X_0} (x_2) \rvert \leq \frac{4\diam(\Omega)}{\conerad(\Theta)}\vert x_1  -x_2\vert =:L_\Theta \abs{x_1-x_2}
\end{equation}
for any $x_2 \in \Omega_{X_0}$, where $\conerad$ satisfies \eqref{eqn: modulus < 1}.
\end{lem}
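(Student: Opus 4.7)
The lemma is a Lipschitz estimate for the concave function $\beta_{X_0}$, of the type one expects for bounded concave functions on a domain containing a uniform-size ball. The role of the hypothesis $x_1 \in B^n_{\conerad(\Theta)/4}(x_0)$ is precisely to ensure that one can extend the segment from $x_1$ backward in the direction opposite to $x_2$ by a uniformly controlled length while remaining in $\Omega_{X_0}$, which gives the room needed to invoke concavity on a concrete interval.

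First, I would note that $\Omega_{X_0}$ contains the ball of radius $\conerad(\Theta)/2$ around $x_0$: since $\conerad$ is non-increasing in $\Theta$, one has $\conerad(\Theta) \leq \conerad(1/2)$, so Remark~\ref{rmk: uni-sized nbhd in proj} gives $B^n_{\conerad(\Theta)/2}(x_0) \subset B^n_{\conerad(1/2)/2}(x_0) \subset \Omega_{X_0}$. Given $x_1 \in B^n_{\conerad(\Theta)/4}(x_0)$ and $x_2 \in \Omega_{X_0}$ with $x_1 \neq x_2$, I would set $v := (x_2 - x_1)/\abs{x_2 - x_1}$, $s_2 := \abs{x_2-x_1}$, and study the one-variable concave function $g(s) := \beta_{X_0}(x_1 + sv)$. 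The triangle inequality places $x_1 - (\conerad(\Theta)/4) v \in B^n_{\conerad(\Theta)/2}(x_0) \subset \Omega_{X_0}$, while $x_2 \in \Omega_{X_0}$, so by convexity of $\Omega_{X_0}$ the segment joining these two points lies in $\Omega_{X_0}$; consequently $g$ is defined on $[-\conerad(\Theta)/4, s_2]$ and takes values in $[0, \diam(\Omega)]$.

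The upper bound $g(s_2) - g(0) \leq L_\Theta s_2$ then comes from the monotonicity of secant slopes for concave functions applied to the three points $-\conerad(\Theta)/4 < 0 < s_2$: $(g(s_2) - g(0))/s_2 \leq (g(0) - g(-\conerad(\Theta)/4))/(\conerad(\Theta)/4) \leq \diam(\Omega)/(\conerad(\Theta)/4) = L_\Theta$. For the reverse inequality I would split into two cases. If $s_2 \leq \conerad(\Theta)/4$, then $x_1 + (\conerad(\Theta)/4)v \in B^n_{\conerad(\Theta)/4}(x_1) \subset \Omega_{X_0}$, so $g(\conerad(\Theta)/4)$ is defined, and the fact that $g$ lies above its chord on $[0, \conerad(\Theta)/4]$ gives $g(s_2) \geq (1 - 4s_2/\conerad(\Theta)) g(0) + (4s_2/\conerad(\Theta)) g(\conerad(\Theta)/4) \geq (1 - 4s_2/\conerad(\Theta)) g(0)$ using $g \geq 0$, which rearranges to $g(s_2) - g(0) \geq -L_\Theta s_2$. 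When instead $s_2 > \conerad(\Theta)/4$, the inequality $L_\Theta s_2 > \diam(\Omega) \geq \abs{g(s_2) - g(0)}$ is automatic. Combining the two directions yields \eqref{eqn: Lip eqn beta}. I do not foresee any real obstacle; the proof reduces to the geometric fact that $\Omega_{X_0}$ contains a uniform ball about its ``center'' together with routine secant-slope estimates in one variable.
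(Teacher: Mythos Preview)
Your proof is correct and follows essentially the same route as the paper: both use the inclusion $B^n_{\conerad(\Theta)/2}(x_0)\subset\Omega_{X_0}$ from Remark~\ref{rmk: uni-sized nbhd in proj}, concavity of $\beta_{X_0}$, and the oscillation bound $|\beta_{X_0}(y)-\beta_{X_0}(z)|\leq\diam(\Omega)$; the paper handles one direction by contradiction and the other via a supporting affine function at $x_1$, while you restrict to the one-dimensional slice through $x_1,x_2$ and use secant-slope monotonicity, a purely stylistic difference. One small correction: since $\beta_{X_0}$ is concave with $\nabla^n\beta_{X_0}(x_0)=0$, one has $\beta_{X_0}\leq\beta_{X_0}(x_0)$ (and in the paper's normalization $\beta_{X_0}\leq 0$), so the assertion ``$g\geq 0$'' is not generally available; however your chord inequality already gives $g(s_2)-g(0)\geq \tfrac{4s_2}{\conerad(\Theta)}\bigl(g(\conerad(\Theta)/4)-g(0)\bigr)\geq -L_\Theta s_2$ directly from the oscillation bound, so the argument goes through without that step.
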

\begin{proof}
Let $x_1\in B^n_{\frac{\conerad(\Theta)}{4}}(x_0)$, we first note that by Remark~\ref{rmk: uni-sized nbhd in proj}, we have $B^n_{\frac{\conerad(\Theta)}{4}}(x_0) \subset \Omega_{X_0}$ so that $\beta_{X_0} (x_1)$ is well-defined. Now we claim that for all $x_2\in \Omega_{X_0}$,
\begin{equation} \label{eqn: Lip cone of beta}
\beta_{X_0} (x_1) - \beta_{X_0} (x_2) \leq L_\Theta\vert x_1-x_2\vert.
\end{equation}
 Indeed, otherwise we obtain
\begin{equation*}
\beta_{X_0} (x_1)-\beta_{X_0} (x_2) > \frac{4\diam(\Omega)}{\conerad(\Theta)}\vert x_1 -x_2 \vert
\end{equation*}
for some $x_2 \in \Omega_{X_0}$. If $\vert x_1 - x_2 \vert > \frac{\conerad(\Theta)}{4}$, then $\beta_{X_0} (x_1) - \beta_{X_0} (x_2) > \diam(\Omega)$ which is impossible since $(x_1, \beta_{X_0} (x_1))$, $(x_2, \beta_{X_0} (x_2))\in\partial \Omega$. If $\vert x_1-x_2\rvert \leq \frac{\conerad(\Theta)}{4}$, then there exists a point $x_3 \in \partial B^n_{\frac{\conerad(\Theta)}{4}}(x_1) \subset \Omega_{X_0}$ such that $x_2$ lies on the line segment from $x_1$ to $x_3$. Then by concavity of $\beta_{X_0} $, we obtain 
\begin{align*}
\beta_{X_0} (x_3) & \leq \frac{\beta_{X_0} (x_2)-\frac{\lvert x_3-x_2\rvert}{\lvert x_3-x_1\rvert}\beta_{X_0} (x_1)}{\frac{\lvert x_2-x_1\rvert}{\lvert x_3-x_1\rvert}}\\
&< \frac{-\frac{4\diam(\Omega)}{\conerad(\Theta)}\lvert x_2 -x_1\rvert+\left(1-\frac{\lvert x_3-x_2\rvert}{\lvert x_3-x_1\rvert}\right)\beta_{X_0} (x_1)}{\frac{\lvert x_2-x_1\rvert}{\lvert x_3-x_1\rvert}} \\
& =  \beta_{X_0} (x_1)-\diam(\Omega),
\end{align*}
again a contradiction, thus proving \eqref{eqn: Lip cone of beta}.

Next, we use concavity of $\beta_{X_0} $ to obtain \eqref{eqn: Lip eqn beta}. Since $\beta_{X_0}$ is concave, there exists an affine function $l_{x_1}: \Omega_{X_0} \to \R$ supporting $\beta_{X_0}$ from above at $x_1$; we can write $l_{x_1}(x) = \inner{p}{x-x_1}+\beta_{X_0}(x_1)$ for some $p\in \R^n$. From \eqref{eqn: Lip cone of beta}, we obtain $l_{x_1}(x_2) \geq \beta_{X_0} (x_2) \geq \beta_{X_0} (x_1) - L_\Theta\vert x_1 - x_2 \vert$ for any $x_2\in \Omega_{X_0}$, therefore we have for $x_2\neq x_1$ that
\begin{equation}
\label{eqn: supp hyperplane bounds cone}
L_\Theta  \geq \frac{\beta_{X_0} (x_1) - l_{x_1}(x_2)}{\abs{x_1 - x_2}} = \inner{p}{\frac{x_1-x_2}{\abs{x_1 - x_2}}}.
\end{equation}
Since $x_1 \in B^n_{\frac{\conerad(\Theta)}{4}}(x_0)\subset \Omega_{X_0}$, we can choose $x_2=x_1-rp\in \Omega_{X_0}$ for some small $r>0$ in \eqref{eqn: supp hyperplane bounds cone} to find $\vert p \vert \leq L_\Theta$, and then we obtain
\begin{equation*}
\beta_{X_0}(x_2)-\beta_{X_0}(x_1) \leq l_{x_1}(x_2) - l_{x_1}(x_1) = \inner{p}{x_2 - x_1} \leq L_\Theta \vert x_2 - x_1 \vert.
\end{equation*}
We combine this with \eqref{eqn: Lip cone of beta} to obtain \eqref{eqn: Lip eqn beta}.
\end{proof}

The second estimate shows that $\nabla^n\beta_X$ has small norm close to $\proj^\Omega_X(X)$ in a quantifiable way (which is essentially a Lipschitz bound on small balls around $\proj^\Omega_X(X)$).
\begin{lem}\label{lem: gradient beta est}
   Fix $\Theta\in (0, 1)$ and suppose $X\in \partial \Omega$ and $Y \in \partial \Omega^+_{\Theta}(X)$. Then we have 
    \begin{equation*}
        \lvert \nabla^n \beta_X ( \proj^\Omega_X(Y) ) \rvert < \frac{\sqrt{1-\Theta^2}}{\Theta}.
    \end{equation*}
\end{lem}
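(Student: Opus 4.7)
The plan is to exploit the graph structure of $\partial\Omega^+(X)$ over $\Omega_X$ in order to write the outward unit normal at $Y$ explicitly in terms of $\nabla^n\beta_X(\proj^\Omega_X(Y))$, and then convert the angular hypothesis $Y\in\partial\Omega^+_\Theta(X)$ directly into the desired gradient bound.

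First I would fix the coordinate system from the definition of $\beta_X$: rotate so that $\normal{\Omega}{X}=e_{n+1}$ and translate so that $\Pi^\Omega_X$ is identified with $\R^n\times\{0\}$, so that $\partial\Omega^+(X)$ becomes the graph of $\beta_X$ (up to the sign convention making $\beta_X$ concave). Setting $y:=\proj^\Omega_X(Y)\in\Omega_X$, the point $Y$ corresponds to $(y,\pm\beta_X(y))$. Since $\partial\Omega$ is $C^1$ and $\beta_X\in C^1(\Omega_X)$, the standard formula for the outward unit normal of a $C^1$ graph expresses $\normal{\Omega}{Y}$ as a unit vector parallel to $(\mp\nabla^n\beta_X(y),\pm 1)$, with the overall sign determined by requiring the vector to point outward rather than inward.

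Next I would pin down this sign using the hypothesis $\inner{\normal{\Omega}{Y}}{\normal{\Omega}{X}}>\Theta>0$: the $(n+1)$-st component of $\normal{\Omega}{Y}$ must be strictly positive, which selects the orientation uniquely and yields
\begin{equation*}
\inner{\normal{\Omega}{Y}}{\normal{\Omega}{X}}=\frac{1}{\sqrt{1+\abs{\nabla^n\beta_X(y)}^2}}.
\end{equation*}
Inserting this identity into the hypothesized strict inequality $\inner{\normal{\Omega}{Y}}{\normal{\Omega}{X}}>\Theta$ and rearranging gives $\abs{\nabla^n\beta_X(y)}^2<(1-\Theta^2)/\Theta^2$, which is exactly the claim.

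No substantive obstacle arises here; the only care needed is in bookkeeping the signs in the graph-to-normal formula, but once the orientation convention making $\beta_X$ concave is fixed, the identity relating $\inner{\normal{\Omega}{Y}}{\normal{\Omega}{X}}$ to $\abs{\nabla^n\beta_X(y)}$ is immediate from elementary multivariable calculus applied to the $C^1$ graph $(y,\pm\beta_X(y))$.
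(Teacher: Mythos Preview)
Your proposal is correct and follows essentially the same approach as the paper: rotate so that $\normal{\Omega}{X}=e_{n+1}$, express $\normal{\Omega}{Y}$ via the graph-of-$\beta_X$ formula, and rearrange the inequality $\Theta<\inner{\normal{\Omega}{X}}{\normal{\Omega}{Y}}=1/\sqrt{1+\lvert\nabla^n\beta_X(y)\rvert^2}$. The paper simply fixes the sign convention directly (writing $\normal{\Omega}{Y}=(-\nabla\beta_X(y),1)/\sqrt{1+\lvert\nabla^n\beta_X(y)\rvert^2}$) rather than carrying the $\pm$ and resolving it afterward, but the argument is otherwise identical.
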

\begin{proof}
    We take coordinates such that $\normal{\Omega}{X} = e_{n+1}$ and write $\partial \Omega^+ (X)$ as the graph of the $C^1$, concave function $\beta_X$. Then we obtain
    \begin{equation*}
        \Theta< \inner{\normal{\Omega}{X}}{\normal{\Omega}{Y}}= \inner{e_{n+1}}{ \frac{(-\nabla \beta_X( \proj^\Omega_X(Y) ) , 1)}{\sqrt{1+ \lvert \nabla^n \beta_X( \proj^\Omega_X(Y) ) \rvert^2}}} = \frac{1}{\sqrt{1+\lvert \nabla^n \beta_X (\proj^\Omega_X(Y)) \rvert^2}},
    \end{equation*}
and rearranging the terms yields the lemma.
\end{proof}
We conclude this section by presenting a counterexample to existence of a Monge solution when $\Omega$ is not $C^1$ (but is actually \emph{uniformly} convex), which shows Theorem~\ref{thm: nonstrictly convex body} is sharp. The following counterexample is based on similar considerations to the example~\cite[Example 3.13]{GangboMcCann00} between two isosceles triangles.
\begin{ex}[Counterexample for non-$C^1$ boundary]\label{ex: non C1}
Let $$\Omega:=B^{n+1}_{R+1}(-Re_{n+1})\cap B^{n+1}_{R+1}(Re_{n+1})$$ (which is clearly uniformly convex), for some $R>0$ sufficiently large, which forms a shape like a convex lens. Letting $\partial\Omega^\pm:=\{X\in\partial \Omega\mid \pm\inner{X}{e_{n+1}}\geq 0\}$, define the measure $\mu\ll\H^{n-1}_{\partial \Omega}$ with constant density $C-\delta$ on $\partial \Omega^+$ and $\delta$ on $\partial \Omega^-$, where $\delta>0$ is small and $C$ is normalized so $\mu$ is a probability measure. Now suppose $\mubar\ll\H^{n-1}_{\partial \Omega}$ has density bounded away from zero and infinity and $\gamma$ is the (unique by \cite[Theorem 2.6]{GangboMcCann00}) Kantorovich solution from $\mu$ to $\mubar$. Then we claim that when $R$ is sufficiently large, $\gamma=(\Id\times T)_\#\mu$ for some map $T: \partial \Omega\to \partial \Omega$ that is single-valued $\mu$-a.e. \emph{only} if $T(\partial \Omega^+)\subset \partial \Omega^+$ up to a set of $\mu$-measure zero.

To see this, suppose there is such a $\mu$-a.e.  single-valued map $T$, and let $X\in \partial \Omega^+$ be a point where both $\mathcal{N}_{\Omega}(X)$ and $T(X)$ are single-valued; the set of such $X$ is of full $\mu$-measure in $\partial \Omega^+$. Now if $R>0$ is large enough, then whenever $X^+\in \partial \Omega^+$ and $X^-\in \partial \Omega^-\setminus \partial \Omega^+$, and $q^\pm$ are outward normal vectors to $\Omega$ at $X^\pm$ respectively, we must have
\begin{align}\label{eqn: opposite side normals}
    \inner{q^+}{q^-}<0.
\end{align}
Fix such an $R>0$. Writing $\normal{\Omega}{X}$ for the unit outward normal to $\Omega$ at a point $X$ when it is unique, if $T(X)\not \in \partial \Omega^+$ we see $\mathcal{N}_{\Omega}(T(X))$ is single-valued and $\inner{\mathcal{N}_{\Omega}(X)}{\mathcal{N}_{\Omega}(T(X))}<0$.
Then by \cite[Proposition 3.4, Definition 3.5, Theorem 3.8]{GangboMcCann00}, we see there must be a point $T^+(X)\in \partial \Omega$ such that $T^+(X)-T(X)$ is a nonnegative multiple of $\mathcal{N}_{\Omega}(X)$, and $(X, T^+(X))\in \spt\gamma$, while $\inner{\mathcal{N}_{\Omega}(X)}{q}\geq 0$ for some vector $q$ which is an outward normal vector to $\Omega$ at $T^+(X)$. Thus by \eqref{eqn: opposite side normals}, we see $T^+(X)$ must be a point in $\partial \Omega^+$, in particular $T^+(X)\neq T(X)$. This would contradict that $\gamma$ is induced by a $\mu$-a.e. single valued map, unless $T^+(X)\in \partial \Omega^+$ for $\mu$-a.e. $X\in \partial \Omega^+$.

Now, define $\mubar_k$ to have density $C-(1+\frac{1}{k})\delta$ on $\partial \Omega^+$ and density $(1+\frac{1}{k})\delta$ on $\partial \Omega^-$. The proof of \cite[Lemma 5.4]{KitagawaWarren12} can be easily adapted to general convex bodies $\Omega$ instead of the sphere, hence if $\rho$ and $\rhobar_k$ are the densities of $\mu$ and $\mubar_k$ respectively, we will have
\begin{align*}
    0<\W^2_2(\mu, \mubar_k)&\leq C\sqrt{\Norm[L^\infty(\partial \Omega)]{\rho-\rhobar_k}}\leq \frac{C\sqrt{\delta}}{\sqrt{k}}\to 0
\end{align*}
as $k\to\infty$. However, since $\mubar_k(\partial \Omega^+)<\mu(\partial \Omega^+)$, no single-valued map $T$ pushing $\mu$ forward to $\mubar_k$ can satisfy $T(\partial \Omega^+)\subset \partial \Omega^+$, in particular no Kantorovich solution between $\mu$ and $\mubar_k$ for any $k$ can be induced by a single-valued map, for large $R$.
\end{ex}
\section{Localized properties for dual potential functions}\label{section: qqconv}
Recall we assume $\Omega$ is a $C^1$, compact, convex body containing the origin in its interior. In particular,  $\mathcal{N}_\Omega$ is uniformly continuous. Also  $\conerad$ will be a function satisfying \eqref{eqn: modulus < 1}. 

We will now prove some convexity properties of sections and $c$-subdifferentials associated to $c$-convex functions, when they are sufficiently localized.
\begin{defin}
    For any $X_0$, $\Xbar_0\in \partial\Omega$, $h>0$, and $c$-convex function $u$, we define the section
    \begin{align*}
    S^{u}_{h, X_0, \Xbar_0}:=\{X\in \partial\Omega\mid u(X)\leq -c(X, \Xbar_0)+c(X_0, \Xbar_0)+u(X_0)+h\}.
    \end{align*}
\end{defin}
\begin{rmk}\label{rmk: contact set is dual c-sub}
    It is well known that if $u$ is a $c$-convex function and $X_0\in \csubdiff[\Omega]{u^c}{\Xbar_0}$, then
    \begin{align*}
        \{\Xbar\in \partial \Omega\mid u^c(\Xbar)\leq -c(X_0, \Xbar)+c(X_0, \Xbar_0)+u^c(\Xbar_0)\}=\csubdiff[\Omega]{u}{X_0},
    \end{align*}
    i.e. sections of height $0$ for $u^c$ correspond to the $c$-subdifferentials of $u$. Indeed, 
    \begin{align*}
        &u^c(\Xbar)\leq -c(X_0, \Xbar)+c(X_0, \Xbar_0)+u^c(\Xbar_0)\\
        &\iff -u(X_0)-u^c(\Xbar)=-u(X_0)+c(X_0, \Xbar)-c(X_0, \Xbar_0)-u^c(\Xbar_0)=c(X_0, \Xbar)\\
        &\iff \Xbar\in \csubdiff[\Omega]{u}{X_0}
    \end{align*}
    where \cite[Proposition 2.6.4]{FigalliGlaudo21} is used in both the second and third lines.
\end{rmk}
\begin{defin}
    For $X_0 \in \partial \Omega$ and $\Xbar_0$, $\Xbar_1 \in \partial \Omega^+(X_0)$, the \emph{$c$-segment from $\Xbar_0$ to $\Xbar_1$ with respect to $X_0$} is the curve $[0, 1]\ni t\mapsto \Xbar_t\in \partial \Omega$ defined by %(after rotating coordinates to identify $\normal{\Omega}{X_0}$ with $e_{n+1}$)
    \begin{align*}
        \Xbar_t:=(1-t)\proj_{X_0}^\Omega(\Xbar_0)+t\proj_{X_0}^\Omega(\Xbar_1)+ \beta_{X_0}((1-t)\proj_{X_0}^\Omega(\Xbar_0)+t\proj_{X_0}^\Omega(\Xbar_1))\normal{\Omega}{X_0}.
    \end{align*}
    By convexity of $\Omega_{X_0}=\proj^\Omega_{X_0}\left(\partial \Omega^+(X_0)\right)$, $\Xbar_t$ is well-defined for all $t\in [0, 1]$.
\end{defin}
Note that $-\nabla_X^\Omega c(X_0, \Xbar_t)=(1-t)\proj_{X_0}^\Omega(\Xbar_0)+t\proj_{X_0}^\Omega(\Xbar_1)$, which is why we have opted to use the name ``$c$-segment'' despite the fact that $c$ does not satisfy the Twist condition (see \cite[P.234]{Villani09}). For our cost function we can show a version of the condition (QQConv) first introduced in \cite{GuillenKitagawa15}.
\begin{prop}\label{prop: qqconv holds}
Fix $X_0 \in \partial \Omega$ and let $X\in \partial \Omega$, $\Xbar_0$, $\Xbar_1 \in \partial \Omega^+(X_0)$. Let $\Xbar_t$ be the $c$-segment with respect to $X_0$ from $\Xbar_0$ to $\Xbar_1$. Then we have the inequality
\begin{equation}\label{eqn: QQconv}
-c(X,\Xbar_t) + c(X_0, \Xbar_t) + c(X, \Xbar_0) - c(X_0, \Xbar_0) \leq t \left( -c(X,\Xbar_1) + c(X_0, \Xbar_1) + c(X, \Xbar_0) - c(X_0, \Xbar_0)\right). 
\end{equation}
\end{prop}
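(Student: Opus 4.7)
The plan is to rewrite both sides of \eqref{eqn: QQconv} using the quadratic structure of $c$ and then reduce the whole inequality to an elementary fact about concave functions and the supporting hyperplane characterization of $\Omega$.

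First, I would expand $c(X,\bar X) = \frac{|X|^2 - 2\inner{X}{\bar X} + |\bar X|^2}{2}$ and observe the useful cancellation
\begin{equation*}
-c(X,\bar Y)+c(X_0,\bar Y)+c(X,\bar Z)-c(X_0,\bar Z)=\inner{X-X_0}{\bar Y-\bar Z},
\end{equation*}
valid for any points $X,X_0,\bar Y,\bar Z\in\R^{n+1}$. Applying this twice, with $\bar Z=\bar X_0$ and $\bar Y=\bar X_t$ or $\bar Y=\bar X_1$, the inequality \eqref{eqn: QQconv} becomes
\begin{equation*}
\inner{X-X_0}{\bar X_t-\bar X_0}\;\le\;t\inner{X-X_0}{\bar X_1-\bar X_0},
\end{equation*}
i.e.\ $\inner{X-X_0}{\bar X_t-\bar X_0-t(\bar X_1-\bar X_0)}\le 0$.

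Next I would compute the vector $\bar X_t-\bar X_0-t(\bar X_1-\bar X_0)$ explicitly from the definition of the $c$-segment. Writing $p_i:=\proj^\Omega_{X_0}(\bar X_i)$ for $i=0,1$, so that $\bar X_i=p_i+\beta_{X_0}(p_i)\normal{\Omega}{X_0}$, the horizontal components cancel and only the contribution in the $\normal{\Omega}{X_0}$-direction survives:
\begin{equation*}
\bar X_t-\bar X_0-t(\bar X_1-\bar X_0)=\bigl[\beta_{X_0}((1-t)p_0+tp_1)-(1-t)\beta_{X_0}(p_0)-t\beta_{X_0}(p_1)\bigr]\,\normal{\Omega}{X_0}.
\end{equation*}
The scalar in brackets is nonnegative because $\beta_{X_0}$ is concave on $\Omega_{X_0}$ (as established in the preliminaries), and the segment $(1-t)p_0+tp_1$ lies in $\Omega_{X_0}$ by convexity of $\Omega_{X_0}$.

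Finally, since $X\in\partial\Omega\subset\Omega$ and $\normal{\Omega}{X_0}$ is the outward unit normal at $X_0\in\partial\Omega$, the supporting hyperplane property gives $\inner{X-X_0}{\normal{\Omega}{X_0}}\le 0$. Multiplying the two nonpositive/nonnegative factors produces the desired inequality. The only place any real hypothesis is being used is the concavity of $\beta_{X_0}$, which in turn depends on $\bar X_0,\bar X_1\in\partial\Omega^+(X_0)$ so that the $c$-segment is well-defined; there is no genuine obstacle here beyond keeping the bookkeeping of terms straight.
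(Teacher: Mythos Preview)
Your argument is correct and in fact cleaner than the paper's own proof. The identity
\[
-c(X,\bar Y)+c(X_0,\bar Y)+c(X,\bar Z)-c(X_0,\bar Z)=\inner{X-X_0}{\bar Y-\bar Z}
\]
reduces the inequality to $\inner{X-X_0}{\Xbar_t-(1-t)\Xbar_0-t\Xbar_1}\le 0$, and the two observations you use---that $\Xbar_t-(1-t)\Xbar_0-t\Xbar_1$ is a nonnegative multiple of $\normal{\Omega}{X_0}$ by concavity of $\beta_{X_0}$, and that $\inner{X-X_0}{\normal{\Omega}{X_0}}\le 0$ by the supporting hyperplane property---settle it immediately.

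The paper arrives at the same conclusion but splits into two cases according to whether $X\in\partial\Omega^+(X_0)$ or not. In the first case it performs essentially your computation in coordinates, using $\beta_{X_0}(x)\le\beta_{X_0}(x_0)$; note this is nothing other than $\inner{X-X_0}{\normal{\Omega}{X_0}}\le 0$ written for $X\in\partial\Omega^+(X_0)$. For $X\notin\partial\Omega^+(X_0)$ the paper constructs an auxiliary point $X^+:=X+\lambda_+\normal{\Omega}{X_0}\in\overline{\partial\Omega^+(X_0)}$, proves that $X^+$ really lies in that closure, applies the first case to $X^+$, and then handles the leftover term $\lambda_+\inner{\normal{\Omega}{X_0}}{(1-t)\Xbar_0+t\Xbar_1-\Xbar_t}$ with another appeal to concavity. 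Your version makes this entire second case unnecessary: the supporting hyperplane inequality already holds for every $X\in\partial\Omega$ (indeed for every $X\in\Omega$), so there is no need to restrict to $\partial\Omega^+(X_0)$ or to introduce $X^+$. What the paper's longer route buys is nothing beyond what you already have; your argument is a strict simplification.
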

\begin{proof}
By a rotation of coordinates we can assume $ \mathcal{N}_{\Omega}(X_0) = e_{n+1}$, then $\partial \Omega^+ (X_0)$ can be written as the graph of the concave function $\beta_{X_0}$. Then, for some $x$, $\xbar_0$, $\xbar_1\in \Omega_{X_0}$, we have
\begin{equation*}
    X = (x,\beta_{X_0}(x)),\ \Xbar_i = (\xbar_i, \beta_{X_0}(\xbar_i)),
\end{equation*}
and the $c$-segment $\Xbar_t$ is given by
\begin{equation*}
    \Xbar_t = ((1-t)\xbar_0 + t\xbar_1, \beta_{X_0}((1-t)\xbar_0 + t\xbar_1)) =: (\xbar_t,\beta_{X_0}(\xbar_t)).
\end{equation*}
First suppose $X\in \partial \Omega^+(X_0)$,  
then writing $x_0:=\proj_{X_0}^\Omega(X_0)$,  we compute
\begin{align*}
& -c(X,\Xbar_t) + c(X_0, \Xbar_t) + c(X, \Xbar_0) - c(X_0, \Xbar_0) \\
& = \inner{x-x_0}{\xbar_t-\xbar_0} + (\beta_{X_0}(x)-\beta_{X_0}(x_0))(\beta_{X_0}(\xbar_t)-\beta_{X_0}(\xbar_0)) \\
& = t \inner{x-x_0}{\xbar_1-\xbar_0} + (\beta_{X_0}(x)-\beta_{X_0}(x_0))(\beta_{X_0}(\xbar_t)-\beta_{X_0}(\xbar_0)).
\end{align*}
By concavity of $\beta_{X_0}$, we have
\begin{equation*}
    \beta_{X_0}(\xbar_t) -\beta_{X_0}(\xbar_0) \geq t( \beta_{X_0}(\xbar_1) - \beta_{X_0}(\xbar_0)),
\end{equation*}
noting that $\beta_{X_0}(x) \leq \beta_{X_0}(x_0)$, we obtain 
\begin{align*}
    &t \inner{x-x_0}{\xbar_1-\xbar_0} + (\beta_{X_0}(x)-\beta_{X_0}(x_0))(\beta_{X_0}(\xbar_t)-\beta_{X_0}(\xbar_0))\\
   & \leq t( \inner{x-x_0}{\xbar_1-\xbar_0} + (\beta_{X_0}(x)-\beta_{X_0}(x_0))(\beta_{X_0}(\xbar_1)-\beta_{X_0}(\xbar_0)) ) \\
   & = t \left( -c(X,\Xbar_1) + c(X_0, \Xbar_1) + c(X, \Xbar_0) - c(X_0, \Xbar_0)\right),
\end{align*}
giving the desired inequality in this case.

Now, suppose $X\in \partial \Omega\setminus\partial\Omega^+(X_0)$, and define
\begin{align*}
    \lambda_+:=\sup\{\lambda\geq 0\mid X+\lambda\normal{\Omega}{X_0}\in \Omega\},\qquad X^+:=X+\lambda_+\normal{\Omega}{X_0},
\end{align*}
since $\Omega$ is compact we see $\lambda_+<\infty$ and $X^+\in \partial \Omega$. We claim that $X^+\in \overline{\partial \Omega^+(X_0)}$. 
First, rotate coordinates so that $\normal{\Omega}{X^+}=e_{n+1}$ and $\proj_{X^+}(\normal{\Omega}{X_0})$ is in the positive $e_1$ direction; letting $x^+:=\proj_{X^+}(X^+)$ we can then write a portion of $\partial \Omega^+(X^+)$ as the graph of $\beta_{X^+}$ over $B_{\frac{\conerad(\frac{1}{2})}{2}}(x^+)$. Now, suppose that $\lambda_+=0$, then $X=X^+$ hence $\inner{\normal{\Omega}{X^+}}{\normal{\Omega}{X_0}}\leq 0$. However, if this inequality is strict, that implies the ray starting at $X=X^+$ in the direction of $\normal{\Omega}{X_0}$ has a nontrivial segment lying under the graph of $\beta_{X^+}$. This segment then must lie in $\Omega$, which contradicts the definition of $\lambda_+$, thus we must have $\inner{\normal{\Omega}{X^+}}{\normal{\Omega}{X_0}}=0$. On the other hand, if $\lambda_+>0$, note that
\begin{align*}
    0&\geq \inner{\normal{\Omega}{X^+}}{X-X^+}=-\lambda_+\inner{\normal{\Omega}{X^+}}{\normal{\Omega}{X_0}}.
\end{align*}
If this inequality is strict the claim is clear, hence we may assume in any case that $\inner{\normal{\Omega}{X^+}}{\normal{\Omega}{X_0}}=0$; in particular, this implies that in our choice of coordinates $\normal{\Omega}{X_0}=e_1$. 
Now, writing $X_t:=(x^++te_1, \beta_{X^+}(x^++te_1))$, by concavity we have for all $t\in (0, \conerad)$,
\begin{align*}
    0&=\inner{-\nabla^n\beta_{X^+}(x^+)}{e_1}=-\left.\frac{d}{ds}\beta_{X^+}(x^++se_1)\right \vert_{s=0}\\
    &\leq -\left.\frac{d}{ds}\beta_{X^+}(x^++se_1)\right \vert_{s=t}= \inner{-\nabla^n\beta_{X^+}(x^++te_1)}{e_1}\\
    &=\inner{\normal{\Omega}{X_t}}{\normal{\Omega}{X_0}}\sqrt{1+\abs{\nabla^n\beta_{X^+}(x^++te_1)}^2}.
\end{align*}
However, if $\inner{-\nabla^n\beta_{X^+}(x^++te_1)}{e_1}=0$ the graph of $\beta_{X^+}$ would contain the nontrivial line segment $[X^+, X^++t\normal{\Omega}{X_0}]$ again contradicting the definition of $\lambda_+$, hence the above implies $\inner{\normal{\Omega}{X_t}}{\normal{\Omega}{X_0}}>0$ for all $t\in (0, \conerad)$. In particular, $X^+\in \overline{\partial \Omega^+(X_0)}$, and the claim is proved.

Now since $-c(X^+, \Xbar)=-c(X, \Xbar)-\lambda_+\inner{\normal{\Omega}{X_0}}{X-\Xbar}-\frac{\lambda_+^2}{2}$ for any $\Xbar\in \partial \Omega$, the calculation in the first case above gives 
\begin{align*}
    &-c(X, \Xbar_t)+c(X_0, \Xbar_t)+c(X, \Xbar_0)-c(X_0, \Xbar_0)\\
    &=-c(X^+, \Xbar_t)+c(X_0, \Xbar_t)+c(X^+, \Xbar_0)-c(X_0, \Xbar_0)+\lambda_+\inner{\normal{\Omega}{X_0}}{\Xbar_0-\Xbar_t}\\
    &\leq t \left( -c(X^+,\Xbar_1) + c(X_0, \Xbar_1)+c(X^+, \Xbar_0) - c(X_0, \Xbar_0)\right)
    +\lambda_+\inner{\normal{\Omega}{X_0}}{\Xbar_0-\Xbar_t}\\
    &= t \left( -c(X,\Xbar_1) + c(X_0, \Xbar_1)+c(X, \Xbar_0) - c(X_0, \Xbar_0)\right)
    +\lambda_+\inner{\normal{\Omega}{X_0}}{(1-t)\Xbar_0+t\Xbar_1-\Xbar_t}.
\end{align*}
By our assumptions, $\Xbar_t\in \partial\Omega^+(X_0)$ for all $t\in [0, 1]$, hence we can calculate
\begin{align*}
    &\inner{\normal{\Omega}{X_0}}{(1-t)\Xbar_0+t\Xbar_1-\Xbar_t}\\
    &=(1-t)\beta_{X_0}(\proj^\Omega_{X_0}(\Xbar_0))+t\beta_{X_0}(\proj^\Omega_{X_0}(\Xbar_1))-\beta_{X_0}((1-t)\proj^\Omega_{X_0}(\Xbar_0)+t\proj^\Omega_{X_0}(\Xbar_1))\leq 0
\end{align*}
by concavity of $\beta_{X_0}$, this yields the desired inequality in the general case.
\end{proof}
Using the above property, we can obtain convexity of sections in certain coordinates, as in \cite[Corollary 2.21]{GuillenKitagawa15}.
\begin{cor}\label{cor: csubdiff c-convex}
\begin{enumerate}
Suppose $u$ is a $c$-convex function.
    \item For any $h>0$ and $\hat X$, $\Xbar_0\in \partial \Omega$, if  $X_0$, $X_1\in S^{u}_{h, \hat X, \Xbar_0}\cap \partial\Omega^+(\Xbar_0)$, and $X_t$ is the $c$-segment with respect to $\Xbar_0$ from $X_0$ to $X_1$, then $X_t\in S^{u}_{h, \hat X, \Xbar_0}$ for all $t\in [0, 1]$.
    \item If $\Xbar_0$, $\Xbar_1\in \csubdiff[\Omega]{u}{X_0}\cap\partial \Omega^+(X_0)$ for some $X_0$, and $\Xbar_t$ is the $c$-segment from $\Xbar_0$ to $\Xbar_1$ with respect to $X_0$, then $\Xbar_t\in \csubdiff[\Omega]{u}{X_0}$. 
\end{enumerate}
\end{cor}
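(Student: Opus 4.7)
The plan is to prove part (1) directly from the (QQConv)-type inequality of Proposition~\ref{prop: qqconv holds} combined with the variational characterization of $c$-convex functions, and then deduce part (2) by running the same argument with $u^c$ in place of $u$ and the roles of the two variables swapped.

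For part (1), I would use that since $u$ is $c$-convex,
\begin{equation*}
u(X) = \sup_{\Ybar \in \partial\Omega}\left(-c(X,\Ybar) - u^c(\Ybar)\right)
\end{equation*}
(this is $u = (u^c)^c$). Fix an arbitrary $\Ybar \in \partial\Omega$ and apply Proposition~\ref{prop: qqconv holds} with its fixed first point taken to be $\Xbar_0$, its two points of $\partial\Omega^+(\Xbar_0)$ taken to be $X_0, X_1$, and its free point taken to be $\Ybar$. Using the symmetry of $c$ to rewrite arguments, the conclusion of Proposition~\ref{prop: qqconv holds} can be rearranged into
\begin{equation*}
-c(X_t,\Ybar) + c(X_t,\Xbar_0) \leq (1-t)\left(-c(X_0,\Ybar) + c(X_0,\Xbar_0)\right) + t\left(-c(X_1,\Ybar) + c(X_1,\Xbar_0)\right).
\end{equation*}
Subtracting $u^c(\Ybar)$ from both sides and using the pointwise bound $-c(X_i,\Ybar) - u^c(\Ybar) \leq u(X_i)$ for $i = 0, 1$, followed by the hypothesis $X_0, X_1 \in S_{h,\hat X,\Xbar_0}$, the right-hand side is bounded above by $u(\hat X) + c(\hat X,\Xbar_0) + h$. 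Since $\Ybar$ was arbitrary, taking the supremum on the left yields $u(X_t) + c(X_t,\Xbar_0) \leq u(\hat X) + c(\hat X,\Xbar_0) + h$, which is exactly the assertion $X_t \in S_{h,\hat X,\Xbar_0}$.

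For part (2), note that $u^c$ is itself $c$-convex (since $(u^c)^c = u$ when $u$ is $c$-convex), and by Remark~\ref{rmk: contact set is dual c-sub}, $\csubdiff[\Omega]{u}{X_0}$ coincides with $\{\Xbar : u^c(\Xbar) + c(X_0,\Xbar) \leq u^c(\Xbar_0) + c(X_0,\Xbar_0)\}$ for any $\Xbar_0 \in \csubdiff[\Omega]{u}{X_0}$, which is the natural analogue of a zero-height section of $u^c$ with base point $X_0$ (the variable running over the second argument of $c$). Because $c$ is symmetric and the hypothesis $\Xbar_0, \Xbar_1 \in \partial\Omega^+(X_0)$ precisely matches the setup of (1) with the roles of the two variables interchanged, the same chain of inequalities (with $u^c$, $X_0$, and $\Xbar_i$ in place of $u$, $\Xbar_0$, and $X_i$) shows that $\Xbar_t$ lies in this set, i.e., $\Xbar_t \in \csubdiff[\Omega]{u}{X_0}$.

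The main obstacle is purely bookkeeping: Proposition~\ref{prop: qqconv holds} is stated with a fixed first point through which the $c$-segment is parametrized, whereas the section in (1) and the $c$-subdifferential in (2) have the fixed point playing the opposite role. Carefully tracking these role assignments and using the symmetry of $c(X,\Xbar) = |X-\Xbar|^2/2$ is what permits the (QQConv) inequality to be put into the form adapted to the sections under consideration; once this is done, the conclusion is the standard principle that (QQConv)-style inequalities imply convexity of sections along $c$-segments.
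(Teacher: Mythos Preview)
Your argument is correct and follows essentially the same route as the paper: apply Proposition~\ref{prop: qqconv holds} with the roles of the two slots of $c$ swapped, combine with the $c$-convexity of $u$, and deduce (2) from (1) via Remark~\ref{rmk: contact set is dual c-sub}. The paper organizes the step slightly differently---it picks a single $\Xbar\in\csubdiff[\Omega]{u}{X_t}$ and uses the subdifferential inequality $u(X_0)\geq -c(X_0,\Xbar)+c(X_t,\Xbar)+u(X_t)$ directly rather than taking a supremum over all $\Ybar\in\partial\Omega$---which incidentally sidesteps a small wrinkle in your version: since the $c$-transform in Definition~\ref{def: c-convex} is a supremum over $B^{n+1}_{\outrad}(0)$ rather than over $\partial\Omega$, the identity $u(X_t)=\sup_{\Ybar\in\partial\Omega}\bigl(-c(X_t,\Ybar)-u^c(\Ybar)\bigr)$ is not literally the relation $u=(u^c)^c$ and would need a word of justification (or simply the replacement of the supremum by that single $\Xbar$).
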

\begin{proof}
By Remark~\ref{rmk: contact set is dual c-sub}, we see that (2) follows from (1) applied to $S^{u^c}_{0, X_0, \Xbar_0}$, where the roles of variables with and without bars above them are reversed.

To show (1), let $\Xbar\in \csubdiff[\Omega]{u}{X_t}$ and write $m_0(X):=-c(X, \Xbar_0)+c(\hat X, \Xbar_0)+u(\hat X)+h$. Up to relabeling, we may assume that
\begin{align*}
    -c(X_1, \Xbar)+c(X_1, \Xbar_0)\leq -c(X_0, \Xbar)+c(X_0, \Xbar_0).
\end{align*}
Then \eqref{eqn: QQconv} from Proposition~\ref{prop: qqconv holds} (with the roles of the variables $X$ and $\Xbar$ reversed) yields
\begin{align*}
    u(X_t)-m_0(X_t)&=-c(X_t, \Xbar)+c(X_t, \Xbar_0)+c(X_t, \Xbar)+u(X_t)-c(\hat X, \Xbar_0)-u(\hat X)-h\\
    &\leq -c(X_0, \Xbar)+c(X_0, \Xbar_0)+c(X_t, \Xbar)+u(X_t)-c(\hat X, \Xbar_0)-u(\hat X)-h\\
    &\leq u(X_0)-m_0(X_0)\leq 0.
\end{align*}
\end{proof}
\begin{defin}
    If $U$ is a real valued function defined on a neighborhood of some point $x_0\in \R^n$, the \emph{local subdifferential} of $U$ at $x_0$ is defined by
    \begin{align*}
        \locsubdiff{U}{x_0}:=\{p\in \R^n\mid U(x_0+v)\geq U(x_0)+\inner{p}{v}+o(\lvert v\rvert),\ v\to 0\}.
    \end{align*}
\end{defin}
We are now able to show a ``local-to-global'' property of supporting $c$-functions as in \cite[Corollary 2.20]{GuillenKitagawa15}. Before doing so, we introduce a bit of notation which will also be useful in Section~\ref{section: aleksandrov}.

\begin{defin}
    The map $-\nabla_X^\Omega c(X_0, \cdot) = \proj^\Omega_{X_0}(\cdot - X_0)$ is injective on $\partial \Omega^+(X_0)$. Identifying $\proj^\Omega_{X_0}(\partial \Omega^+(X_0)-X_0)$ with a subset of $T_{X_0}\partial \Omega$, we define the \emph{$c$-exponential map}  $\cExp{X_0}{\cdot} : \proj^\Omega_{X_0}(\partial \Omega^+(X_0)-X_0) \to \partial \Omega^+(X_0)$ with respect to $X_0$ as the inverse function of $\proj^\Omega_{X_0}(\cdot - X_0) : \partial \Omega^+ (X_0) \to T_{X_0}\partial \Omega$.

\end{defin}

\begin{rmk}\label{rmk: cexp lip estimate}
Note that after rotating coordinates so that $\normal{\Omega}{X_0}=e_{n+1}$, we can write
    \begin{align*}
        \cExp{X_0}{p}=(\proj^\Omega_{X_0}(X_0)+p, \beta_{X_0}(\proj^\Omega_{X_0}(X_0)+p)).
    \end{align*}
    Since $\partial \Omega$ is $C^1$, $\beta_{X_0}$ is differentiable, and so is $\cExp{X_0}{\cdot}$. Taking a derivative, we obtain (viewing $D \cExp{X_0}{p}$ for $p\in\proj^\Omega_{X_0}(\partial\Omega^+(X_0)-X_0)$ as a map $T_{X_0} \partial \Omega\cong T_x (T_{X_0} \partial \Omega)\to \R^{n+1}$) for any $v\in \R^n$,
    \begin{equation}
    \label{eqn: derivative cexp}
        D \cExp{X_0}{p}v = \left.\frac{d}{dt}\cExp{X_0}{p+tv}\right\vert_{t=0}=(v, \inner{\nabla^n\beta_{X_0}(\proj^\Omega_{X_0}(X_0)+p)}{v}),
    \end{equation}
    which we note belongs to $T_{\cExp{X_0}{p}}\partial \Omega$.
\end{rmk}
For $X\in \partial \Omega$, $Y \in \partial \Omega^+(X)$, we define $[Y]_X := \proj^\Omega_X (Y-X)$ and we use a similar notation for sets: if $A \subset \partial \Omega^+(X)$, then $[A]_X := \proj^\Omega_X(A-X)$.
\begin{cor}\label{cor: local to global}
Suppose $u$ is a $c$-convex function such that $\csubdiff[\Omega]{u}{X_0}\subset \partial \Omega^+(X_0)$ for some $X_0\in \partial\Omega$, and define $U: [\partial\Omega^+(X_0)]_{X_0}\to \R$ by 
    \begin{align*}
        U(x):=u(\cExp{X_0}{x})
    \end{align*}
    (after identifying $T_{X_0}\partial\Omega$ with $\R^n$). Then $\locsubdiff{U}{0}\subset [\partial\Omega^+(X_0)]_{X_0}$, and $\cExp{X_0}{\locsubdiff{U}{0}}\subset \csubdiff[\Omega]{u}{X_0}$. 
\end{cor}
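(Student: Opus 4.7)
The plan is to identify $\locsubdiff{U}{0}$ as a subset of $K:=[\csubdiff[\Omega]{u}{X_0}]_{X_0}$; both conclusions then follow, since $\cExp{X_0}{\cdot}$ is the inverse of $[\cdot]_{X_0}$ on $\partial\Omega^+(X_0)$ and $\csubdiff[\Omega]{u}{X_0}\subset\partial\Omega^+(X_0)$ by assumption, so every $p\in K$ equals $[\Xbar]_{X_0}$ for a unique $\Xbar\in\csubdiff[\Omega]{u}{X_0}$ with $\cExp{X_0}{p}=\Xbar$.

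First I would verify that $K$ is a compact convex subset of $[\partial\Omega^+(X_0)]_{X_0}$. Containment is immediate. Compactness follows because $\csubdiff[\Omega]{u}{X_0}$ is closed in the compact set $\partial\Omega$ (being cut out by continuous inequalities) and $[\cdot]_{X_0}$ is continuous on $\partial\Omega^+(X_0)$. For convexity, Corollary~\ref{cor: csubdiff c-convex}~(2) says that the $c$-segment with respect to $X_0$ between any $\Xbar_0,\Xbar_1\in\csubdiff[\Omega]{u}{X_0}$ lies in $\csubdiff[\Omega]{u}{X_0}$, and by the very definition of a $c$-segment it projects under $[\cdot]_{X_0}$ onto the ordinary straight segment $t\mapsto(1-t)[\Xbar_0]_{X_0}+t[\Xbar_1]_{X_0}$. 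Let $\sigma_K$ denote its support function.

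The next key input is the uniform expansion
\begin{align*}
-c(\cExp{X_0}{v},\Xbar)+c(X_0,\Xbar)=\inner{[\Xbar]_{X_0}}{v}+o(\abs{v})\quad\text{as } v\to 0,
\end{align*}
uniformly in $\Xbar\in\partial\Omega$. Rotating and translating so that $X_0=0$ and $\normal{\Omega}{X_0}=e_{n+1}$, a direct computation gives
\begin{align*}
-c(\cExp{X_0}{v},\Xbar)+c(0,\Xbar)=\inner{[\Xbar]_{X_0}}{v}+\beta_{X_0}(v)\Xbar^{n+1}-\frac{\abs{v}^2+\beta_{X_0}(v)^2}{2}.
\end{align*}
Applying Lemma~\ref{lem: gradient beta est} with $Y=X=X_0$ and letting $\Theta\nearrow 1$ yields $\nabla^n\beta_{X_0}(0)=0$, hence $\beta_{X_0}(v)=o(\abs{v})$; since $\abs{\Xbar^{n+1}}\leq\diam(\Omega)$, the remainder is $o(\abs{v})$ uniformly in $\Xbar$.

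Given this, for $p\in\locsubdiff{U}{0}$ and each small $v$, select $\Xbar_v\in\csubdiff[\Omega]{u}{\cExp{X_0}{v}}$ (nonempty by compactness). Applying the $c$-subdifferential inequality with $X_0$ as test point,
\begin{align*}
U(v)-U(0)\leq -c(\cExp{X_0}{v},\Xbar_v)+c(X_0,\Xbar_v)=\inner{[\Xbar_v]_{X_0}}{v}+o(\abs{v}).
\end{align*}
Outer semicontinuity of $X\mapsto\csubdiff[\Omega]{u}{X}$, obtained by passing to the limit in its defining inequality using the Lipschitz continuity of $u$ from Remark~\ref{rmk: subdifferentials}, together with compactness, gives $\dist(\Xbar_v,\csubdiff[\Omega]{u}{X_0})\to 0$ as $v\to 0$, and hence $\inner{[\Xbar_v]_{X_0}}{v}\leq\sigma_K(v)+o(\abs{v})$. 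Combining with the inequality $\inner{p}{v}+o(\abs{v})\leq U(v)-U(0)$ from the definition of local subgradient, taking $v=t\hat v$ and letting $t\searrow 0$ yields $\inner{p}{\hat v}\leq\sigma_K(\hat v)$ for every unit $\hat v$, so $p\in K$. The main technical point is the uniformity of the expansion, which rests only on $C^1$-regularity of $\beta_{X_0}$---all that is available since $\partial\Omega$ is merely $C^1$; convexity of $K$, a direct payoff of the (QQConv)-type property in Corollary~\ref{cor: csubdiff c-convex}, is what enables the support-function characterization to pin down $p$ exactly.
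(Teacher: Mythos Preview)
Your proof is correct and takes a genuinely different route from the paper's. The paper argues via the structure of $\locsubdiff{U}{0}$ as the closed convex hull of the set $\locdiff{U}{0}$ of reachable gradients: it first treats points of tangential differentiability (where $\locsubdiff{U}{0}$ is a singleton already equal to $[\Xbar_0]_{X_0}$ for some $\Xbar_0\in\csubdiff[\Omega]{u}{X_0}$), then for general $X_0$ takes sequences $x_k\to 0$ of differentiability points, identifies each $\lim_k\nabla^n U(x_k)$ with $[\Xbar_\infty]_{X_0}$ for some $\Xbar_\infty\in\csubdiff[\Omega]{u}{X_0}$, and finally invokes convexity of $K$ (from Corollary~\ref{cor: csubdiff c-convex}) to pass to the convex hull. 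Your argument bypasses the reachable-gradients decomposition entirely: you bound $U(v)-U(0)$ from above directly by the $c$-subdifferential inequality at the \emph{nearby} point $\cExp{X_0}{v}$, and then the outer semicontinuity of $X\mapsto\csubdiff[\Omega]{u}{X}$ plus the uniform first-order expansion of the cost converts this into a support-function bound for $K$, yielding $p\in K$ at once. Both proofs rely on the convexity of $K$ coming from the (QQConv) property; yours is arguably more elementary in that it avoids the Clarke-type characterization of $\locsubdiff{U}{0}$, at the price of needing the uniformity of the cost expansion in $\Xbar$ (which you obtain cleanly from $\beta_{X_0}(v)=o(\lvert v\rvert)$ and the boundedness of $\partial\Omega$). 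One minor remark: the appeal to Lemma~\ref{lem: gradient beta est} to get $\nabla^n\beta_{X_0}(0)=0$ is a detour---once you have rotated so that $\normal{\Omega}{X_0}=e_{n+1}$, this is immediate from the formula $\normal{\Omega}{X_0}=(-\nabla^n\beta_{X_0}(0),1)/\sqrt{1+\lvert\nabla^n\beta_{X_0}(0)\rvert^2}$.
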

\begin{proof}
%Let us write $x_0:=\proj^\Omega_{X_0}X_0$. 
First suppose $u$ is tangentially differentiable at $X_0$, then $U$ is differentiable at $0$; hence $\{\nabla^\Omega u(X_0)\}=\locsubdiff{U}{0}$, where here we identify $T_{X_0}\partial\Omega$ with $\R^n$ by an abuse of notation. 
Also, since $u$ is $c$-convex, we see there exists $\Xbar_0\in \csubdiff[\Omega]{u}{X_0}\subset \partial \Omega^+(X_0)$ such that $[\Xbar_0]_{X_0}=\nabla^\Omega u(X_0)$, in particular $\nabla^\Omega u(X_0)\in [\partial\Omega^+(X_0)]_{X_0}$ and $\cExp{X_0}{\nabla^\Omega u(X_0)}=\Xbar_0\in \csubdiff[\Omega]{u}{X_0}$ so the corollary is proved in this case.

In the general case, we claim that the set of exposed points of $\locsubdiff{U}{0}$ is contained in  the set 
    \begin{align*}
    \locdiff{U}{0}:=\{p\in \R^n\mid \exists x_k\to 0,\ p=\lim_{k\to \infty}\nabla^n U(x_k)\}.
    \end{align*}
    Indeed, suppose that $p_0\in \R^n$ is an exposed point of $\locsubdiff{U}{0}$, this implies there exists a unit vector $v_0\in \R^n$ such that for any $p\in \locsubdiff{U}{0}\setminus \{p_0\}$,
    \begin{align}\label{eqn: supporting at exposed}
        \inner{p-p_0}{v_0}<0.
    \end{align}
    Since $u$ is $c$-convex, $U$ is Lipschitz in a neighborhood of $0$ (recall Remark~\ref{rmk: subdifferentials}), in particular differentiable on a dense subset. Thus by considering a sequence of cones with shrinking openings whose vertices are at $0$ and axial directions are $v_0$, we can find a sequence $\{x_k\}_{k=1}^\infty\subset \R^n\setminus\{0\}$ such that $x_k\to 0$, $x_k/\abs{x_k}\to v_0$ as $k\to\infty$, and $U$ is differentiable at each $x_k$. Since $U$ is Lipschitz in a neighborhood of $0$, we see $\{\nabla^n U(x_k)\}_{k=1}^\infty$ is bounded, hence we may pass to a subsequence to assume $\nabla^n U(x_k)\to p_\infty$ for some $p_\infty\in \R^n$. By the $c$-convexity of $u$ we find for each $k$ there exists some $\Xbar_k\in \csubdiff{u}{\cExp{X_0}{x_k}}$, then since $U$ is differentiable at $x_k$ we can see that $\nabla^n U(x_k)=-\nabla^n_x c(\cExp{X_0}{x}, \Xbar_k)\vert_{x=x_k}$. Now by compactness of $\partial\Omega$, we may pass to another subsequence to assume the $\Xbar_k$ converge to some $\Xbar_\infty\in \partial \Omega$, which we can see belongs to $\csubdiff{u}{X_0}$. In turn, this implies that $-\nabla^n_x c(\cExp{X_0}{x}, \Xbar_\infty)\vert_{x=0}\in \locsubdiff{U}{0}$; since $c\in C^1(\partial\Omega\times\partial\Omega)$ we see $-\nabla^n_x c(\cExp{X_0}{x}, \Xbar_\infty)\vert_{x=0}=p_\infty$, thus
    \begin{align}\label{eqn: limit in local subdiff}
        p_\infty\in \locsubdiff{U}{0}.
    \end{align}
    By the mean value theorem, for each $k$ we can find some $t_k\in [0, 1]$ such that
    \begin{align*}
        -c(\cExp{X_0}{0}, \Xbar_k)+c(\cExp{X_0}{x_k}, \Xbar_k)=\inner{\nabla^n_x c(\cExp{X_0}{x}, \Xbar_k)\vert_{x=t_kx_k}}{x_k},
    \end{align*}
    using this we can calculate,
    \begin{align}
        U(x_k)
        &\geq U(0)+\inner{x_k}{p_0}+o(\abs{x_k})
        =u(\cExp{X_0}{0})+\inner{x_k}{p_0}+o(\abs{x_k})\notag\\
        &\geq -c(\cExp{X_0}{0}, \Xbar_k)+c(\cExp{X_0}{x_k}, \Xbar_k)+u(\cExp{X_0}{x_k})+\inner{x_k}{p_0}+o(\abs{x_k})\notag\\
        &=\inner{x_k}{p_0+\nabla^n_x c(\cExp{X_0}{x}, \Xbar_k)\vert_{x=t_kx_k}}+U(x_k)+o(\abs{x_k})\text{ as }k\to\infty.\label{eqn: local subdiff ineq}
    \end{align}
    Again using that $c\in C^1(\partial\Omega\times\partial\Omega)$, we have
    \begin{align*}
        &\abs{p_\infty+\nabla^n_x c(\cExp{X_0}{x}, \Xbar_k)\vert_{x=t_kx_k}}
        \leq \abs{p_\infty-\nabla^n_x U(x_k)}
        +\abs{\nabla^n_x U(x_k)+\nabla^n_x c(\cExp{X_0}{x}, \Xbar_k)\vert_{x=x_k}}\\
        &+\abs{-\nabla^n_x c(\cExp{X_0}{x}, \Xbar_k)\vert_{x=x_k}+\nabla^n_x c(\cExp{X_0}{x}, \Xbar_k)\vert_{x=t_kx_k}}\to 0,
    \end{align*}
    thus dividing~\eqref{eqn: local subdiff ineq} by $\abs{x_k}$ and taking $k\to \infty$ yields
    \begin{align*}
        0\leq \inner{p_\infty-p_0}{v_0}.
    \end{align*}
    Combining with~\eqref{eqn: limit in local subdiff} and~\eqref{eqn: supporting at exposed}, this implies $p_0=p_\infty$, and the claim is proved.
    
    Hence the closed, convex set $\locsubdiff{U}{0}$ is the closed convex hull of $\locdiff{U}{0}$. Now suppose $x_k\to 0$ where each $x_k$ is a point of differentiability of $U$, and that $\lim_{k\to\infty}\nabla^nU(x_k)=p$ for some $p\in\R^n$.  Then $X_k:=\cExp{X_0}{x_k}$ is a point of tangential differentiability of $u$ for each $k$, thus by the argument above there exist  $\Xbar_k\in \csubdiff[\Omega]{u}{X_k}$ with $\nabla ^\Omega u(X_k)=[\Xbar_k]_{X_k}\in [\partial\Omega^+(X_k)]_{X_k}$. Passing to a subsequence we may assume $\Xbar_k\to \Xbar_\infty$ for some $\Xbar_\infty\in \partial \Omega$, which belongs to $\csubdiff[\Omega]{u}{X_0}$ by continuity of $u$ and $c$. We now claim that 
    \begin{align*}
        [\Xbar_\infty]_{X_0}=p.
    \end{align*}
    Indeed, for any fixed index $1\leq i\leq n$, we have that $(e_i, \partial_i\beta_{X_0}(\proj^\Omega_{X_0}(X_0)+x_k))\in T_{X_k}\partial \Omega$ and
    \begin{align*}
        \partial_iU(x_k)&=\left.\frac{d}{dt}u(\cExp{X_0}{x_k+te_i})\right\vert_{t=0}\\
&=\inner{\nabla^\Omega u(X_k)}{(e_i, \partial_i\beta_{X_0}(\proj^\Omega_{X_0}(X_0)+x_k))}\\
        &=\inner{[\Xbar_k]_{X_k}}{(e_i, \partial_i\beta_{X_0}(\proj^\Omega_{X_0}(X_0)+x_k))}.
    \end{align*}
    Taking a limit on both sides above yields
    \begin{align*}
        \inner{p}{e_i}&=\lim_{k\to\infty}\partial_iU(x_k)\\
        &=\lim_{k\to\infty}\inner{[\Xbar_k]_{X_k}}{(e_i, \partial_i\beta_{X_0}(\proj^\Omega_{X_0}(X_0)+x_k))}
        =\inner{[\Xbar_\infty]_{X_0}}{(e_i, 0)},
    \end{align*}
    proving the claim. Since $\Xbar_k\in \partial \Omega^+(X_k)$ for each $k$, we have $\Xbar_\infty\in \partial \Omega^+(X_0)$ hence the above shows $\locdiff{U}{0}\subset [\partial\Omega^+(X_0)]_{X_0}$. Since $[\partial\Omega^+(X_0)]_{X_0}$ is convex, this implies that $\locsubdiff{U}{0}\subset[\partial\Omega^+(X_0)]_{X_0}$, in particular $\Xbar_0:=\cExp{X_0}{p}$ is well-defined for any $p\in \locsubdiff{U}{0}$. 
    Now we see the above yields
    \begin{align*}
        \locdiff{U}{0}\subset [\csubdiff[\Omega]{u}{X_0}]_{X_0},
    \end{align*}
    then since $\csubdiff[\Omega]{u}{X_0}\subset \partial \Omega^+(X_0)$, Corollary~\ref{cor: csubdiff c-convex} (2) implies $[\csubdiff[\Omega]{u}{X_0}]_{X_0}$ is convex, hence we obtain
    \begin{align*}
        [\Xbar_0]_{X_0}\in \locsubdiff{U}{0}\subset [\csubdiff[\Omega]{u}{X_0}]_{X_0}.
    \end{align*}
    Finally since $\Xbar_0\in \partial \Omega^+(X_0)$ and $\csubdiff[\Omega]{u}{X_0}\subset \partial \Omega^+(X_0)$, this implies $\Xbar_0\in\csubdiff[\Omega]{u}{X_0}$.
\end{proof}

\section{Localized Aleksandrov estimates}\label{section: aleksandrov}
In this section, we prove analogues of the classical upper and lower Aleksandrov estimates in Monge-Amp{\`e}re theory. Since the proofs rely on Proposition~\ref{prop: qqconv holds}, they will only apply to suitably localized sections. We start with the lower Aleksandrov estimate whose proof bears similarities to that of \cite[Lemma 3.7]{GuillenKitagawa15}. Below, dilations of convex sets in Euclidean space are with respect to their center of mass. Throughout this section, we fix
\begin{align*}
    \Xbar_0&\in\partial \Omega, \qquad m_0(X):=-c(X, \Xbar_0)+\tilde h \text{ for some }\tilde h>0,\\
    S :&=\{ X \in \partial \Omega \mid u(X) \leq m_0(X) \}.
\end{align*}

\begin{rmk}\label{rmk: section equivalence}
    Note that for any $h>0$, by taking $\tilde h=c(X_0, \bar X_0)+u(X_0)+h$ we find the section $S$ defined above is equal to $S^u_{h, X_0, \bar X_0}$.
\end{rmk}

\begin{thm}\label{thm: lower aleksandrov}
Let $A\subset S$ be such that $\proj^\Omega_{\Xbar_0}(A)$ is convex and $X_{cm}\in \partial\Omega^+(\Xbar_0)$ be such that $\proj^\Omega_{\Xbar_0}(X_{cm})$ is the center of mass of $\proj^\Omega_{\Xbar_0}(A)$. Also assume $2\proj^\Omega_{\Xbar_0}(A)\subset \proj^\Omega_{\Xbar_0}(S)$, where the dilation of $\proj^\Omega_{\Xbar_0}(A)$ is with respect to $\proj^\Omega_{\Xbar_0}(X_{cm})$. 
Fix $\Theta\in (0, 1)$ and suppose that 
\begin{align}
    S&\subset\partial\Omega^+_{\Theta}(\Xbar_0),\label{eqn: lower est section in same side}\\
    S&\subset B^{n+1}_{\frac{\conerad(\frac{1}{2})}{4}}(X_{cm}),\label{eqn: lower est section in center ball}\\
    \csubdiff[\Omega]{u}{S}&\subset \partial \Omega^+(X_{cm}).\label{eqn: lower est csub in same side}
\end{align}
Then 
\begin{align*}
    \sup_{S}(m_0-u)^n\geq \frac{\Theta}{4}\mathcal{H}^n(A)\mathcal{H}^n(\csubdiff[\Omega]{u}{A}).
\end{align*}
\end{thm}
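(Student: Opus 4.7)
The plan is to reduce Theorem~\ref{thm: lower aleksandrov} to a classical Euclidean Aleksandrov-type bound via tangent-plane coordinates at $\Xbar_0$. Translate and rotate so that $\Xbar_0 = 0$ and $\normal{\Omega}{\Xbar_0} = e_{n+1}$, so that $\Omega \subset \{x_{n+1} \leq 0\}$ and $\partial \Omega^+(\Xbar_0)$ is the graph of the concave function $\beta_{\Xbar_0}: \Omega_{\Xbar_0} \to \R$, with $\beta_{\Xbar_0} \leq 0$, $\beta_{\Xbar_0}(0) = 0$, and $\nabla^n \beta_{\Xbar_0}(0) = 0$. Set $\pi := \proj^\Omega_{\Xbar_0}$, $\hat S := \pi(S)$, $\hat A := \pi(A)$. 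Corollary~\ref{cor: csubdiff c-convex}(1) (after rewriting $S$ as a section $S_{h, \hat X, \Xbar_0}$ for some $\hat X \in S$ with $h > 0$) combined with \eqref{eqn: lower est section in same side} gives that $\hat S$ is convex; $\hat A$ is convex with $\hat A \subset \frac{1}{2}\hat S$ around $\pi(X_{cm})$ by hypothesis. The Jacobian $|\langle \normal{\Omega}{\cdot}, e_{n+1}\rangle|$ of $\pi$ lies in $(\Theta, 1]$ on $S$ by \eqref{eqn: lower est section in same side}, yielding $\Theta \mathcal{H}^n(A) \leq \mathcal{L}^n(\hat A) \leq \mathcal{H}^n(A)$.

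For each $\Xbar \in \csubdiff[\Omega]{u}{A}$, pick $X_\Xbar \in A$ with $\Xbar \in \csubdiff[\Omega]{u}{X_\Xbar}$, and form the $c$-support $m_\Xbar(Y) := -c(Y, \Xbar) + c(X_\Xbar, \Xbar) + u(X_\Xbar) \leq u(Y)$. Since $c$ is quadratic and $\Xbar_0 = 0$, the function $\ell_\Xbar := m_0 - m_\Xbar$ is affine on $\R^{n+1}$ with $\nabla \ell_\Xbar = -\Xbar$; it satisfies $\ell_\Xbar \geq 0$ on $S$ and $\ell_\Xbar(X_\Xbar) = (m_0 - u)(X_\Xbar) \leq M := \sup_S (m_0 - u)$. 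Restricting to the graph of $\beta_{\Xbar_0}$ gives
\begin{align*}
    \hat \ell_\Xbar(x) := \ell_\Xbar((x, \beta_{\Xbar_0}(x))) = -\langle x, \pi(\Xbar)\rangle - \beta_{\Xbar_0}(x) \langle e_{n+1}, \Xbar\rangle + \text{const}_\Xbar,
\end{align*}
which is concave on $\hat S$: since $\Xbar \in \Omega \subset \{x_{n+1} \leq 0\}$, the coefficient $-\langle e_{n+1}, \Xbar\rangle \geq 0$ scales the concave $\beta_{\Xbar_0}(x)$ by a nonnegative factor. Moreover $\hat \ell_\Xbar \geq 0$ on $\hat S$ (as $\ell_\Xbar \geq 0$ on $S = \pi^{-1}(\hat S) \cap \partial \Omega^+(\Xbar_0)$) and $\hat \ell_\Xbar(\hat x_\Xbar) \leq M$ at $\hat x_\Xbar := \pi(X_\Xbar) \in \hat A$.

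With these concave nonnegative supports in hand, run the classical Aleksandrov argument. For any supergradient $\hat p_\Xbar \in \partial^+ \hat\ell_\Xbar(\hat x_\Xbar)$, concavity and nonnegativity give $\langle \hat p_\Xbar, \hat x_\Xbar - y\rangle \leq M$ for all $y \in \hat S$; combined with the centering $\hat A \subset \frac{1}{2}\hat S$ around $\pi(X_{cm})$ (which furnishes a point $2\hat x_\Xbar - \pi(X_{cm}) \in \hat S$ in a controlled direction from each $\hat x_\Xbar$), a standard support-function manipulation yields a bound of the form $\mathcal{L}^n(\mathcal{P}) \cdot \mathcal{L}^n(\hat A) \lesssim M^n$ for the slope set $\mathcal{P} := \{\hat p_\Xbar : \Xbar \in \csubdiff[\Omega]{u}{A}\}$, with the explicit constant matching $\Theta/4$ after the domain-side Jacobian is folded in. The map $\Xbar \mapsto \hat p_\Xbar = -\pi(\Xbar) - \langle e_{n+1}, \Xbar\rangle \nabla^n \beta_{\Xbar_0}(\hat x_\Xbar)$ is essentially injective, and its Jacobian (via the smallness of $|\nabla^n \beta_{\Xbar_0}|$ near the origin, guaranteed by Lemma~\ref{lem: gradient beta est} together with the localization assumptions \eqref{eqn: lower est section in center ball}--\eqref{eqn: lower est csub in same side}) is comparable to the surface-measure Jacobian of $\Xbar \mapsto \pi(\Xbar)$, giving $\mathcal{H}^n(\csubdiff[\Omega]{u}{A}) \lesssim \mathcal{L}^n(\mathcal{P})$. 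Chaining the inequalities yields the conclusion.

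The main obstacle is the last identification: translating Lebesgue measure on $\mathcal{P} \subset \R^n$ into Hausdorff measure on $\csubdiff[\Omega]{u}{A} \subset \partial \Omega$ through the slope map $\Xbar \mapsto \hat p_\Xbar$. The perturbation term $-\langle e_{n+1}, \Xbar\rangle \nabla^n \beta_{\Xbar_0}(\hat x_\Xbar)$ — an artifact of parameterizing the curved boundary by the tangent plane at $\Xbar_0$ rather than at $\Xbar$ or $X_{cm}$ — must not distort the measure comparison. This is exactly where the hypotheses \eqref{eqn: lower est section in center ball} and \eqref{eqn: lower est csub in same side} enter: they keep $\hat x_\Xbar$ close to the origin, where $|\nabla^n\beta_{\Xbar_0}|$ is controlled, so that the slope map becomes a near-isometry modulo the natural graph Jacobian.
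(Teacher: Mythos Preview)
Your reduction is on the right track up to the dual-volume step, but the final measure comparison has a genuine gap that cannot be closed by the smallness of $\lvert\nabla^n\beta_{\Xbar_0}\rvert$ alone. The slope map you write down,
\[
\Xbar\ \longmapsto\ \hat p_\Xbar = -\pi(\Xbar) - \langle e_{n+1},\Xbar\rangle\,\nabla^n\beta_{\Xbar_0}(\hat x_\Xbar),
\]
is not a function of $\Xbar$: the perturbation depends on $\hat x_\Xbar = \pi(X_\Xbar)$, which in turn depends on your \emph{choice} of $X_\Xbar\in A$ with $\Xbar\in\csubdiff[\Omega]{u}{X_\Xbar}$. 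There is no canonical such choice, any measurable selection will be discontinuous in $\Xbar$, and so there is no Jacobian to speak of. Smallness of the perturbation only gives $\mathcal P\subset -\pi(\csubdiff[\Omega]{u}{A})+B^n_\epsilon$, which bounds $\mathcal L^n(\mathcal P)$ from \emph{above}---the wrong direction for your chain. Nor is injectivity salvageable: since $\hat x_\Xbar$ ranges freely over $\hat A$ as $\Xbar$ varies, for distinct $\Xbar,\Xbar'$ with $\pi(\Xbar)\neq\pi(\Xbar')$ one can have $\hat p_\Xbar=\hat p_{\Xbar'}$ once the perturbations align, collapsing volume. A secondary issue is your choice of chart: the hypotheses place $\csubdiff[\Omega]{u}{A}$ in $\partial\Omega^+(X_{cm})$, \emph{not} in $\partial\Omega^+(\Xbar_0)$, so even the unperturbed comparison $\mathcal H^n(\csubdiff[\Omega]{u}{A})\sim\mathcal L^n(\pi(\csubdiff[\Omega]{u}{A}))$ is not available in your coordinates.

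The paper's proof avoids both problems by invoking the (QQConv) inequality of Proposition~\ref{prop: qqconv holds} rather than relying on raw concavity of $\hat\ell_\Xbar$. First, applying (QQConv) twice along $c$-segments in the $X$-variable (this is Lemma~\ref{lem: bound on c function difference}) yields the uniform bound
\[
-c(X,\Ybar)+c(X_{cm},\Ybar)+c(X,\Xbar_0)-c(X_{cm},\Xbar_0)\ \leq\ M
\]
for \emph{every} $X\in A$ and every $\Ybar\in\csubdiff[\Omega]{u}{A}$, with the crucial feature that the base point $X_{cm}$ is fixed independently of $\Ybar$. Second, one switches to coordinates at $X_{cm}$---where both $A$ and $\csubdiff[\Omega]{u}{A}$ are graphical by \eqref{eqn: lower est section in center ball} and \eqref{eqn: lower est csub in same side}---and differentiates (QQConv) along the $c$-segment from $\Xbar_0$ to $\Ybar$ at $t=0^+$. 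This produces $\langle\Phi_0(z)-x_{cm},\,\ybar-\xbar_0\rangle\leq M$, where
\[
\Phi_0(z)=z+\bigl(\beta_{X_{cm}}(z)-\beta_{X_{cm}}(x_{cm})\bigr)\nabla^n\beta_{X_{cm}}(\xbar_0)
\]
is a map of the \emph{domain} variable $z$ alone, with perturbation direction $\nabla^n\beta_{X_{cm}}(\xbar_0)$ \emph{fixed} (independent of both $z$ and $\Ybar$). This decoupling is what makes $\Phi_0$ a genuine change of variables with explicitly computable $\det D\Phi_0\geq\Theta$, after which \cite[Lemma~3.9]{GuillenKitagawa15} closes the argument. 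The missing ingredient in your approach is precisely this use of (QQConv) to freeze the perturbation direction; concavity of $\beta_{\Xbar_0}$ alone does not supply it.
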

The following is the key lemma for the estimate.
\begin{lem}\label{lem: bound on c function difference}
Under the same hypotheses and notation as Theorem~\ref{thm: lower aleksandrov}, if $\Ybar\in \csubdiff[\Omega]{u}{Y}$ for some $Y\in A$, then for any $X\in A$,
\begin{align*}
    -c(X, \Ybar)+c(X_{cm}, \Ybar)-(-c(X, \Xbar_0)+c(X_{cm}, \Xbar_0))\leq \sup_S(m_0-u).
\end{align*}
\end{lem}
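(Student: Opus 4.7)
\medskip\noindent\textbf{Proof proposal.} The plan is to combine the $c$-support inequality at $Y$ (coming from $\Ybar\in\csubdiff[\Omega]{u}{Y}$) with the hypothesis that $X_{cm},X\in S$, which forces $u\leq m_0$ at both points. First I would introduce the affine lower support $\ell_Y(Z):=-c(Z,\Ybar)+c(Y,\Ybar)+u(Y)$, satisfying $\ell_Y\leq u$ on $\partial\Omega$ with equality at $Z=Y$, and the affine function $g_Y:=m_0-\ell_Y$. A direct algebraic check shows that the lemma's LHS equals $g_Y(X_{cm})-g_Y(X)$; moreover $g_Y$ is affine on $\R^{n+1}$ with gradient $\Xbar_0-\Ybar$, and $g_Y\geq 0$ on $S$ by the $c$-support inequality.

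The main computational step is to apply the $c$-support inequality at $Z=X$ and use $u(X)\leq m_0(X)$ (valid since $X\in A\subset S$) to rearrange into
\begin{equation*}
c(X,\Ybar)-c(X_{cm},\Ybar)\;\geq\;\ell_Y(X_{cm})-m_0(X).
\end{equation*}
Combining this with the identity $c(X,\Xbar_0)-c(X_{cm},\Xbar_0)=m_0(X_{cm})-m_0(X)$ and substituting into the expansion of the lemma's LHS yields the clean intermediate inequality $g_Y(X_{cm})-g_Y(X)\leq g_Y(X_{cm})$, so the proof reduces to showing $g_Y(X_{cm})\leq\sup_S(m_0-u)$.

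In the special case $Y=X_{cm}$ this is immediate, because $\ell_{X_{cm}}(X_{cm})=u(X_{cm})$ collapses $g_{X_{cm}}(X_{cm})$ to $m_0(X_{cm})-u(X_{cm})$, which is bounded by $\sup_S(m_0-u)$ since $X_{cm}\in S$. For general $Y\in A$, the affinity of $g_Y$ writes $g_Y(X_{cm})=g_Y(Y)+\inner{X_{cm}-Y}{\Xbar_0-\Ybar}$, and since $g_Y(Y)=m_0(Y)-u(Y)\leq\sup_S(m_0-u)$ it suffices to bound $\inner{X_{cm}-Y}{\Xbar_0-\Ybar}\leq 0$. I would obtain this via $c$-cyclical monotonicity, pairing $(Y,\Ybar)$ with $(X_{cm},\Xbar_{cm})$ for some $\Xbar_{cm}\in\csubdiff[\Omega]{u}{X_{cm}}$ and using the hypothesis $\csubdiff[\Omega]{u}{S}\subset\partial\Omega^+(X_{cm})$ to guarantee both $\Ybar$ and $\Xbar_{cm}$ lie in $\partial\Omega^+(X_{cm})$.

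The main obstacle I anticipate is achieving the sharp constant: a straightforward $c$-cyclical-monotonicity argument only yields $\inner{X_{cm}-Y}{\Ybar-\Xbar_{cm}}\geq 0$, and converting to the $\Xbar_0$ direction naturally picks up a slack proportional to $m_0(X_{cm})-u(X_{cm})$, a priori weakening the final bound to $2\sup_S(m_0-u)$. To absorb this slack and land exactly at $\sup_S(m_0-u)$, I anticipate applying Proposition~\ref{prop: qqconv holds} along the $c$-segment from $\Xbar_0$ to $\Ybar$ with respect to $X_{cm}$ (whose endpoints both lie in $\partial\Omega^+(X_{cm})$ by \eqref{eqn: lower est section in same side} and \eqref{eqn: lower est csub in same side}), and exploiting the concavity of $\beta_{X_{cm}}$ together with the center-of-mass characterization of $X_{cm}$ to interpolate between the $Y$- and $X_{cm}$-slacks.
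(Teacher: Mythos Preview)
Your reduction to bounding $g_Y(X_{cm})\leq\sup_S(m_0-u)$ cannot be completed: this inequality is \emph{false} in general. A concrete instance already occurs on a flat piece of boundary (so $\beta\equiv 0$ and everything reduces to $\R^1$): take $\tilde u(x)=x^4/4$, $\Xbar_0=0$, $\tilde h=1/4$, so $S=[-1,1]$, $A=[-\tfrac12,\tfrac12]$, $X_{cm}=0$, and $\sup_S(m_0-u)=1/4$. For $Y=1/2$ one has $\Ybar=1/8$ and
\[
\inner{X_{cm}-Y}{\Xbar_0-\Ybar}=(-\tfrac12)(-\tfrac18)=\tfrac{1}{16}>0,\qquad g_Y(X_{cm})=\tfrac{19}{64}>\tfrac{16}{64}=\sup_S(m_0-u).
\]
So the sign claim you aim for fails, and the vaguely sketched fix via monotonicity or a $c$-segment from $\Xbar_0$ to $\Ybar$ cannot repair it: there is no reason $\Xbar_0$ is tied to $\csubdiff[\Omega]{u}{X_{cm}}$, and the center-of-mass property of $X_{cm}$ plays no role here.

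The underlying reason your approach stalls is that the first step throws away too much: you only use $X\in S$ (hence $g_Y(X)\geq 0$), never the crucial hypothesis $\proj^\Omega_{\Xbar_0}(A)\subset\tfrac12\proj^\Omega_{\Xbar_0}(S)$. The paper's proof exploits this half-dilation directly. It applies \eqref{eqn: QQconv} (with the $X$ and $\Xbar$ roles swapped) along the $c$-segment with respect to $\Xbar_0$ from $X_{cm}$ through $X$ to a boundary point $Z_x\in\partial S$, where $u(Z_x)=m_0(Z_x)$; since $X\in A$, the parameter satisfies $t_x\leq\tfrac12$, which yields
\[
\text{LHS}\leq \tfrac12\,g_Y(X_{cm}).
\]
A second application of \eqref{eqn: QQconv}, this time along the $c$-segment through $Y$ to a boundary point $Z_y$, gives a self-improving inequality $g_Y(X_{cm})\leq\tfrac12 g_Y(X_{cm})+\sup_S(m_0-u)$, i.e.\ $g_Y(X_{cm})\leq 2\sup_S(m_0-u)$. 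Combining the two halves yields the lemma. The factor $1/2$ gained in each step is exactly what your approach loses by replacing the half-dilation with the bare inclusion $X\in S$.
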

\begin{proof}
Let us rotate coordinates so that $\normal{\Omega}{\Xbar_0}=e_{n+1}$, write $\beta$ for $\beta_{\Xbar_0}$, let $\ybar:=\proj^\Omega_{\Xbar_0}(\Ybar)$,  $y:=\proj^\Omega_{\Xbar_0}(Y)$, $x_{cm}:=\proj^\Omega_{\Xbar_0}(X_{cm})$, and fix $X=(x, \beta(x))\in A$. Then there exists a point $z_x\in \partial^n (\proj^\Omega_{\Xbar_0}(S))$ where $\partial^n$ is the boundary of a set with respect to the relative topology of $\R^n$, such that $x$ lies on the segment $[x_{cm}, z_x]$. Similarly, there is $z_y\in \partial^n (\proj^\Omega_{\Xbar_0}(S))$ such that $y$ lies on the segment $[x_{cm}, z_y]$; by hypothesis $x=(1-t_x)x_{cm}+t_xz_x$ and $y=(1-t_y)x_{cm}+t_yz_y$ for some $t_x$, $t_y\in [0, \frac{1}{2}]$. Let us write $Z_x:=(z_x, \beta(z_x))$, $Z_y:=(z_y, \beta(z_y))$. By \eqref{eqn: lower est section in same side}, $S\subset\partial\Omega^+(\Xbar_0)$, so we may apply Proposition~\ref{prop: qqconv holds} (with the roles of the $X$ and $\Xbar$ variables switched) to calculate,
\begin{align}
    &-c(X, \Ybar)+c(X_{cm}, \Ybar)-(-c(X, \Xbar_0)+c(X_{cm}, \Xbar_0))\notag\\
    &\leq t_x(-c(Z_x, \Ybar)+c(X_{cm}, \Ybar)-(-c(Z_x, \Xbar_0)+c(X_{cm}, \Xbar_0)))\notag\\
    &=t_x(-c(Z_x, \Ybar)+c(X_{cm}, \Ybar)-(m_0(Z_x)-m_0(X_{cm})))\notag\\
    &\leq t_x(-c(Z_x, \Ybar)+c(X_{cm}, \Ybar)-u(Z_x)+m_0(X_{cm}))\notag\\
    &\leq t_x(-c(Y, \Ybar)+c(X_{cm}, \Ybar)-u(Y)+m_0(X_{cm}))
    \leq \frac{1}{2}(-c(Y, \Ybar)+c(X_{cm}, \Ybar)-u(Y)+m_0(X_{cm})),\label{eqn: first QQConv bound}
\end{align}
where we have used that $-c(X_{cm}, \Ybar)+c(Y, \Ybar)+u(Y)\leq u(X_{cm})\leq m_0(X_{cm})$ and $t_x\leq 1/2$ for the last inequality. Continuing and using Proposition~\ref{prop: qqconv holds} again,
\begin{align*}
    &-c(Y, \Ybar)+c(X_{cm}, \Ybar)-u(Y)+m_0(X_{cm})\notag\\
    &\leq t_y(-c(Z_y, \Ybar)+c(X_{cm}, \Ybar)-(-c(Z_y, \Xbar_0)+c(X_{cm}, \Xbar_0)))+m_0(Y)-m_0(X_{cm})-u(Y)+m_0(X_{cm})\notag\\
    &\leq t_y(-c(Z_y, \Ybar)+c(X_{cm}, \Ybar)-u(Z_y)+m_0(X_{cm}))+\sup_S(m_0-u)\notag\\
    &\leq t_y(-c(Y, \Ybar)+c(X_{cm}, \Ybar)-u(Y)+m_0(X_{cm}))+\sup_S(m_0-u)\notag\\
    &\leq \frac{1}{2}(-c(Y, \Ybar)+c(X_{cm}, \Ybar)-u(Y)+m_0(X_{cm}))+\sup_S(m_0-u),
\end{align*}
thus rearranging and combining with \eqref{eqn: first QQConv bound} yields
\begin{align*}
    &-c(X, \Ybar)+c(X_{cm}, \Ybar)-(-c(X, \Xbar_0)+c(X_{cm}, \Xbar_0))\\
    &\leq \frac{1}{2}(-c(Y, \Ybar)+c(X_{cm}, \Ybar)-u(Y)+m_0(X_{cm}))\leq \sup_S(m_0-u).
\end{align*}
\end{proof}
We now prove the lower Aleksandrov estimate.
\begin{proof}[Proof of Theorem~\ref{thm: lower aleksandrov}]
Make a rotation so that $\normal{\Omega}{X_{cm}}=e_{n+1}$ and write $x_{cm}:=\proj^\Omega_{X_{cm}}(X_{cm})$. Also let $\Ybar\in \csubdiff[\Omega]{u}{Y}$ for some $Y\in A$; by~\eqref{eqn: lower est csub in same side} we have $\Ybar\in \partial\Omega^+(X_{cm})$, while~\eqref{eqn: lower est section in same side} implies $X_{cm}\in \partial\Omega^+_\Theta(\Xbar_0)$, hence $\Xbar_0\in \partial\Omega^+(X_{cm})$, and by \eqref{eqn: lower est section in center ball} we have $A\subset \partial\Omega^+(X_{cm})$. Then we can write $\xbar_0:=\proj^\Omega_{X_{cm}}(\Xbar_0)$, $\ybar:=\proj^\Omega_{X_{cm}}(\Ybar)$, and $\ybar_t:=(1-t)\xbar_0+t\ybar$ for $t\in [0, 1]$. Fix $X:=(x, \beta_{X_{cm}}(x))\in A$, then we may apply Proposition~\ref{prop: qqconv holds} along $\Ybar_t:=(\ybar_t, \beta_{X_{cm}}(\ybar_t))$: dividing \eqref{eqn: QQconv} by $t$ and taking $t\to 0^+$, then using Lemma~\ref{lem: bound on c function difference} yields
\begin{align*}
    \sup_S(m_0-u)&\geq -c(X, \Ybar)+c(X_{cm}, \Ybar)-(-c(X, \Xbar_0)+c(X_{cm}, \Xbar_0))\\
    &\geq \left.\frac{d}{dt}(-c(X, \Ybar_t)+c(X_{cm}, \Ybar_t))\right\vert_{t=0^+}\\
    &=\inner{x+(\beta_{X_{cm}}(x)-\beta_{X_{cm}}(x_{cm}))\nabla^n\beta_{X_{cm}}(\xbar_0)-x_{cm}}{\ybar-\xbar_0}.
\end{align*}
In the notation of \cite[Definition 3.2]{GuillenKitagawa15}, this implies that
\begin{align*}
    \proj^\Omega_{X_{cm}}(\csubdiff[\Omega]{u}{A})\subset \left(\Phi_0(\proj^\Omega_{X_{cm}}(A))\right)^*_{x_{cm}, \xbar_0, \sup_S(m_0-u)}
\end{align*}
where 
\begin{align*}
    \Phi_0(z):=z+(\beta_{X_{cm}}(z)-\beta_{X_{cm}}(x_{cm}))\nabla^n\beta_{X_{cm}}(\xbar_0),\qquad z\in \proj^\Omega_{X_{cm}}(A).
\end{align*}
Then by \cite[Lemma 3.9]{GuillenKitagawa15}, we obtain
\begin{align}\label{eqn: transformed lower estimate}
    \mathcal{H}^n(\Phi_0(\proj^\Omega_{X_{cm}}(A)))\mathcal{H}^n(\proj^\Omega_{X_{cm}}(\csubdiff[\Omega]{u}{A}))\leq C_n\sup_S(m_0-u)^n,
\end{align}
for some $C_n>0$ depending only on $n$.
We claim that $\Phi_0$ is injective. To show this, suppose $\Phi_0(z_0)=\Phi_0(z_1)$, $z_0$, $z_1\in \Omega_{X_{cm}}$. Then 
\begin{align}\label{eqn: Phi_0 injectivity}
    z_0-z_1=(\beta_{X_{cm}}(z_1)-\beta_{X_{cm}}(z_0))\nabla^n\beta_{X_{cm}}(\xbar_0),
\end{align} hence if $\beta_{X_{cm}}(z_1)=\beta_{X_{cm}}(z_0)$ we obtain $z_1=z_0$. Suppose by contradiction that $\beta_{X_{cm}}(z_1)>\beta_{X_{cm}}(z_0)$, and write $Z_i:=(z_i, \beta_{X_{cm}}(z_i))$ for $i=0$, $1$. Then since $A\subset S\subset \partial \Omega^+_{\Theta}(\Xbar_0)$ by \eqref{eqn: lower est section in same side}, we have
\begin{align*}
    0<\inner{\normal{\Omega}{Z_0}}{\normal{\Omega}{\Xbar_0}}=\frac{\inner{\nabla^n\beta_{X_{cm}}(z_0)}{\nabla^n\beta_{X_{cm}}(\xbar_0)}+1}{\sqrt{1+\abs{\nabla^n\beta_{X_{cm}}(z_0)}^2}\sqrt{1+\abs{\nabla^n\beta_{X_{cm}}(\xbar_0)}^2}}
\end{align*}
However by \eqref{eqn: Phi_0 injectivity} and concavity of $\beta_{X_{cm}}$, we see
\begin{align*}
   (\beta_{X_{cm}}(z_1)-\beta_{X_{cm}}(z_0))\inner{\nabla^n\beta_{X_{cm}}(z_0) }{\nabla^n\beta_{X_{cm}}(\xbar_0)}&=\inner{\nabla^n\beta_{X_{cm}}(z_0) }{z_0-z_1}\leq \beta_{X_{cm}}(z_0)-\beta_{X_{cm}}(z_1)
\end{align*}
which upon dividing by $\beta_{X_{cm}}(z_1)-\beta_{X_{cm}}(z_0)$ yields a contradiction. Thus $\beta_{X_{cm}}(z_1)=\beta_{X_{cm}}(z_0)$ and $\Phi_0$ is injective. By Sylvester's determinant theorem we have
\begin{align*}
    \det D\Phi_0(z)=1+\inner{\nabla^n\beta_{X_{cm}}(z) }{\nabla^n\beta_{X_{cm}}(\xbar_0)}\geq \inner{\normal{\Omega}{(z, \beta_{X_{cm}}(z))}}{\normal{\Omega}{\Xbar_0}}>\Theta
\end{align*}
for all $z\in \proj^\Omega_{X_{cm}}(A)$. Using Lemma~\ref{lem: gradient beta est} we can see that
\begin{align*}
A\subset B^{n+1}_{\frac{\conerad(1/2)}{4}}(X_{cm})\cap \partial \Omega\subset \partial\Omega^+_{1/2}(X_{cm}).  
\end{align*}
Thus we may apply  \eqref{eqn: lower est section in center ball} combined with  Lemma~\ref{lem: gradient beta est} to obtain
\begin{align*}
    \mathcal{H}^n(A)&=\int_{\proj^\Omega_{X_{cm}}(A)}\sqrt{1+\abs{\nabla^n\beta_{X_{cm}}(x)}^2}dx\\
    &\leq\int_{\proj^\Omega_{X_{cm}}(A)}\sqrt{1+\sqrt{3}^2}dx= 2\mathcal{H}^n(\proj^\Omega_{X_{cm}}(A)),
\end{align*}
and using \eqref{eqn: lower est csub in same side} the same inequality with $A$ replaced by $\csubdiff[\Omega]{u}{A}$ holds. Hence combining the change of variables formula with \eqref{eqn: transformed lower estimate} gives
\begin{align*}
  C_n\sup_S(m_0-u)^n&\geq \Theta \mathcal{H}^n(\proj^\Omega_{X_{cm}}(A))\mathcal{H}^n(\proj^\Omega_{X_{cm}}(\csubdiff[\Omega]{u}{A}))\\
   &\geq \frac{\Theta}{4} \mathcal{H}^n(A)\mathcal{H}^n(\csubdiff[\Omega]{u}{A}),
\end{align*}
finishing the proof of the theorem.
\end{proof}
The localized version of the upper Aleksandrov estimate \cite[Theorem 2.1]{GuillenKitagawa17} will require some more preliminary lemmas.

Before we state our estimate, we fix some notation. Recall that for $X\in \partial \Omega$, $Y \in \partial \Omega^+(X)$, we denote $[Y]_X := \proj^\Omega_X (Y-X)$ and for $A\subset \partial \Omega^+(X)$ we write $[A]_X := \proj^\Omega_X(A-X)$. Also, if $A$ is a convex set (in any dimension) and $w$ is a unit vector, we define $\Pi^w_A$ to be the hyperplane which supports $A$ at a point at which $w$ is an outward normal vector. Also we define $l(A, w)$ to be the length of the longest segment contained in $A$ which is parallel to $w$, i.e.
    \begin{equation*}
        l(A, w) := \sup\{ \lvert y_1 - y_2 \rvert \mid y_1, y_2 \in A,\ y_1 - y_2 = \lvert y_1 - y_2\rvert w \}.
    \end{equation*}
\begin{thm}\label{thm: upper aleksandrov}
Assume that
\begin{align}
 \csubdiff[\Omega]{u}{X}&\subset B^{n+1}_{\frac{\conerad(\frac{35}{36})}{8}}(X),\quad\forall X\in \partial\Omega,\label{eqn: csub in same side}\\
 S&\subset B^{n+1}_{\frac{\conerad(\frac{35}{36})}{8}}(\Xbar_0).\label{eqn: section in same side as slope}
\end{align}
Then for any unit length $w\in \R^n$ and $X_0\in S$ such that $m_0(X_0)>u(X_0)$ we have the following inequality: 
\begin{equation}
    C \H^n(\csubdiff[\Omega]{u}{S}) \H^n(S) \geq \frac{l([S]_{\Xbar_0},w)}{d\left( [X_0]_{\Xbar_0}, \Pi^w_{[S]_{\Xbar_0}}\right)}(m_0(X_0)-u(X_0))^n
\end{equation}
for some constant $C$ that depends on $\Omega$.
\end{thm}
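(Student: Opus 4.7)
The plan is to pull the problem back via the $c$-exponential $\cExp{\Xbar_0}{\cdot}$ to the tangent plane at $\Xbar_0$, where the $c$-convex inequalities from $u$ become ordinary convexity statements in Euclidean coordinates, and then to apply a direction-dependent Aleksandrov maximum principle in the spirit of \cite{GuillenKitagawa17}. After a rotation and translation we may assume $\mathcal{N}_\Omega(\Xbar_0)=e_{n+1}$ and $\Xbar_0=0\in\R^{n+1}$, so the tangent plane at $\Xbar_0$ is $\{z_{n+1}=0\}$, $\cExp{\Xbar_0}{y}=(y,\beta(y))$ with $\beta:=\beta_{\Xbar_0}$ concave and $\beta(0)=0$, and $\Omega\subset\{z_{n+1}\leq 0\}$ by convexity. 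By \eqref{eqn: section in same side as slope} and Lemma~\ref{lem: interior cone}, $S\subset\partial\Omega^+_{35/36}(\Xbar_0)$, so Corollary~\ref{cor: csubdiff c-convex}(1) makes $[S]_{\Xbar_0}$ convex. Set $V(y):=u(\cExp{\Xbar_0}{y})+c(\cExp{\Xbar_0}{y},\Xbar_0)-\tilde h$; then $V\leq 0$ on $[S]_{\Xbar_0}$, $V=0$ on $\partial[S]_{\Xbar_0}$ by continuity of $u$, and $V(y_0)=u(X_0)-m_0(X_0)<0$ at $y_0:=[X_0]_{\Xbar_0}$.

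The first key step is to prove that $V$ is convex on $[S]_{\Xbar_0}$. For every $y_1\in[S]_{\Xbar_0}$ and $\Xbar_1=(\xbar_1,\bar z_1)\in\csubdiff[\Omega]{u}{\cExp{\Xbar_0}{y_1}}$, the $c$-convexity of $u$ gives
\begin{align*}
V(y)-V(y_1)\geq \phi_{\Xbar_1}(y)-\phi_{\Xbar_1}(y_1),\quad \phi_{\Xbar_1}(y):=c(\cExp{\Xbar_0}{y},\Xbar_0)-c(\cExp{\Xbar_0}{y},\Xbar_1),
\end{align*}
and a direct expansion of the Euclidean quadratic cost in the current coordinates yields $\phi_{\Xbar_1}(y)=\inner{y}{\xbar_1}+\bar z_1\,\beta(y)+\mathrm{const}$. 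Since $\Xbar_1\in\Omega$ lies below $\{z_{n+1}=0\}$, $\bar z_1\leq 0$, and $\beta$ is concave, so $\phi_{\Xbar_1}$ is convex in $y$. Writing $V$ as the supremum of such convex supporting functions, each touching $V$ at its base point, shows that $V$ is convex on $[S]_{\Xbar_0}$.

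To obtain the estimate involving only $\csubdiff[\Omega]{u}{X_0}$ at the single point $X_0$ (rather than over the full section), I would construct the $c$-cone-below
\begin{align*}
\tilde K(X) := u(X_0)+\sup_{\Xbar\in\csubdiff[\Omega]{u}{X_0}}\brackets{c(X_0,\Xbar)-c(X,\Xbar)},
\end{align*}
which is $c$-convex with $\tilde K(X_0)=u(X_0)$ and $\tilde K\leq u$ because each summand is a $c$-support of $u$ through $X_0$. A short calculation using $c(X,\Xbar)=|X-\Xbar|^2/2$ rewrites $\tilde K(X)=u(X_0)+H_0(X-X_0)-|X-X_0|^2/2$, where $H_0$ is the support function on $\R^{n+1}$ of $\csubdiff[\Omega]{u}{X_0}-X_0$. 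Pulling back via $\cExp{\Xbar_0}{\cdot}$ gives a convex function $\tilde V(y):=\tilde K(\cExp{\Xbar_0}{y})+c(\cExp{\Xbar_0}{y},\Xbar_0)-\tilde h$ satisfying $\tilde V\leq V$, $\tilde V(y_0)=V(y_0)$, and $\tilde V\leq 0$ on $\partial[S]_{\Xbar_0}$. Applying the classical direction-dependent Aleksandrov maximum principle to $\tilde V$ on $[S]_{\Xbar_0}$ (after the standard restriction to the convex sublevel set $\{\tilde V<0\}$ so the boundary values genuinely vanish) yields
\begin{align*}
|V(y_0)|^n\leq C\,\frac{d(y_0,\Pi^w_{[S]_{\Xbar_0}})}{l([S]_{\Xbar_0},w)}\,\H^n([S]_{\Xbar_0})\,\H^n(\partial\tilde V([S]_{\Xbar_0})).
\end{align*}
Since $H_0$ is 1-homogeneous its Euclidean subdifferential concentrates at $0$, and a direct check shows $\partial\tilde V([S]_{\Xbar_0})$ agrees (up to null sets) with the image of $\csubdiff[\Omega]{u}{X_0}$ under the shear map $\Psi_{y_0}:\xbar\mapsto\xbar+\beta(\xbar)\,\nabla\beta(y_0)$ on $\R^n$. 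Sylvester's identity gives $\det D\Psi_{y_0}(\xbar)=1+\inner{\nabla\beta(y_0)}{\nabla\beta(\xbar)}$, uniformly bounded away from $0$ and $\infty$ by Lemma~\ref{lem: gradient beta est} applied with $\Theta=35/36$ (combining \eqref{eqn: csub in same side}, \eqref{eqn: section in same side as slope}, the triangle inequality and Lemma~\ref{lem: interior cone} to place $\csubdiff[\Omega]{u}{X_0}\subset\partial\Omega^+_{\Theta'}(\Xbar_0)$ for some $\Theta'>0$). Combined with Lemma~\ref{lem: unif size nbhd lip beta} to pass between $\H^n$ on $\partial\Omega$ and Lebesgue measure on its $\proj^\Omega_{\Xbar_0}$-image, the chain of inequalities produces the theorem.

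The step I expect to be the main obstacle is the \emph{Monge-Amp\`ere mass concentration} claim, namely that the Euclidean subdifferential of the curved cone-below $\tilde V$ over the entire section $[S]_{\Xbar_0}$ is, up to null sets, already realized at $y_0$: because $H_0$ is built from a subset of the curved manifold $\partial\Omega$ rather than a convex body in $\R^{n+1}$, the usual polyhedral-cone argument needs to be refined, relying on the convexity of $[\csubdiff[\Omega]{u}{X_0}]_{X_0}$ from Corollary~\ref{cor: csubdiff c-convex}(2) transferred to the $\Xbar_0$-based projection through the $C^1$-closeness of the two $c$-exponentials, which the $\partial\Omega^+_{35/36}$ constraint in the hypotheses makes quantitative.
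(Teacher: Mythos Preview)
Your approach diverges from the paper's in a fundamental way, and the step you correctly flag as the ``main obstacle'' is a genuine gap rather than a technical nuisance.

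First, a remark on the target. The theorem as stated carries $\H^n(\csubdiff[\Omega]{u}{X_0})$, but the paper's own proof multiplies Lemma~\ref{lem: csubdiff cone est} (which bounds $\H^n(\csubdiff[\Omega]{u}{S})$ from below) against Lemma~\ref{lem: |S| lower est}; what is actually established is the estimate with $\csubdiff[\Omega]{u}{S}$ in place of $\csubdiff[\Omega]{u}{X_0}$. This is also the version needed for the engulfing/strict-convexity machinery of \cite[Theorem 5.7]{GuillenKitagawa15} invoked in Section~\ref{section: main proof}. You have read the statement literally and are attempting to prove the single-point version, which is strictly stronger and is false already for the ordinary Monge--Amp\`ere equation: take a convex function that is nearly affine near $X_0$ but bends sharply elsewhere in $S$.

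Second, even setting the target aside, your concentration claim does not survive the pullback. The identity $\tilde K(X)=u(X_0)+H_0(X-X_0)-\tfrac12|X-X_0|^2$ is correct in $\R^{n+1}$, and the subdifferential of $H_0$ as a function on $\R^{n+1}$ does concentrate at the origin. But $\tilde V$ is the restriction of this to the curved graph $X=(y,\beta(y))$, and restriction to a nonflat hypersurface destroys $1$-homogeneity: $\tilde V(y)$ contains the term $H_0\bigl((y-y_0,\ \beta(y)-\beta(y_0))\bigr)$, whose argument is not linear in $y-y_0$. Consequently $\tilde V$ has nontrivial Monge--Amp\`ere mass away from $y_0$, and there is no reason for $\partial\tilde V([S]_{\Xbar_0})$ to be controlled by any shear of $\csubdiff[\Omega]{u}{X_0}$ alone. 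The $C^1$-closeness of the two $c$-exponentials under the $\partial\Omega^+_{35/36}$ constraint gives Jacobian bounds, not the zeroth-order rigidity you would need here.

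The paper proceeds differently. It builds the $c$-cone $K_{S,X_0}$ \emph{from above} (supremum of $c$-affine functions through $(X_0,u(X_0))$ that stay below $m_0$ on $S$), proves $\csubdiff[\Omega]{K_{S,X_0}}{X_0}\cap\bigcap_{X\in S}\partial\Omega^+(X)\subset\csubdiff[\Omega]{u}{S}$ (Lemma~\ref{lem: csubdiff cone in csubdiff u }), and then for each direction $w_i$ constructs by hand an element $\Xbar_{w_i}^*\in\csubdiff[\Omega]{K_{S,X_0}}{X_0}$ via the intermediate value theorem along a $c$-segment (Lemma~\ref{lem: csubdiff in small section est}), with quantitative length and direction control. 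The convex hull of these $[\Xbar_{w_i}^*]_{X_0}$ then lower-bounds $\H^n(\csubdiff[\Omega]{u}{S})$. No Euclidean Aleksandrov estimate is invoked; the argument stays on the $c$-side throughout, which is what lets it tolerate a merely $C^1$ boundary.
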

We will follow the idea of \cite[Lemma 5.5]{GuillenKitagawa17}. In the proof, we use generalized (Clarke) subdifferentials and a ``$c$-cone'' to control the set $\csubdiff[\Omega]{u}{S}$. We first give the definition of a $c$-cone and some lemmas before proving Theorem~\ref{thm: upper aleksandrov}.

\begin{defin}\label{def:c-cone}
Fix $X_0\in S$. We define the \emph{$c$-cone with base $S$ and vertex $X_0$} by the function
    \begin{equation*}
        K_{S, X_0}(X) := \sup\{ -c(X,\Xbar)+c(X_0,\Xbar) + u(X_0) \mid -c(\cdot,\Xbar)+c(X_0,\Xbar)+u(X_0) \leq m_0 \textrm{ on } S \}.
    \end{equation*}
\end{defin}

\begin{rmk}\label{rmk: csubdiff of c cone}
A $c$-cone is a $c$-convex function as it is defined as a supremum of $-c$. Then Corollary~\ref{cor: csubdiff c-convex} implies that $[\csubdiff[\Omega]{K_{S, X_0}}{X_0} \cap \partial \Omega^+(X_0)]_{X_0}$ is convex. Also, it is easy to see that if $\Xbar$ satisfies $-c(\cdot,\Xbar)+c(X_0,\Xbar)+u(X_0) \leq m_0$ on $S$, then $-c(X,\Xbar)+c(X_0,\Xbar)+u(X_0) \leq K_{S, X_0}(X) $ for any $X \in \partial \Omega$ by definition of $K_{S, X_0}$, i.e. $\Xbar \in \csubdiff[\Omega]{K_{S, X_0}}{X_0}$. Note that we did not have to check $\Xbar \in \partial \Omega^+(X_0)$ in contrast to Corollary~\ref{cor: csubdiff c-convex}. 
\end{rmk}

\begin{lem}\label{lem: csubdiff cone in csubdiff u }
Assume $u$ is such that $\csubdiff[\Omega]{u}{X}\subset \partial \Omega^+(X)$ for every $X\in \partial \Omega$, and fix $X_0\in S$ with $m_0(X_0)>u(X_0)$. Then we have
    \begin{equation*}
        \csubdiff[\Omega]{K_{S, X_0}}{X_0} \cap \bigcap_{X \in S} \partial \Omega^+(X) \subset \csubdiff[\Omega]{u}{S}.
    \end{equation*}
\end{lem}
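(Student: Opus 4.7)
The plan is a sliding argument on the supporting $c$-affine function associated to $\Xbar$, combined with a $c$-segment analysis to localize the contact point.

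First I would observe that $K_{S,X_0}(X_0)=u(X_0)$: the admissible family in the supremum defining $K_{S,X_0}$ is nonempty because $\Xbar_0$ is itself admissible (the hypothesis $m_0(X_0)>u(X_0)$ forces the corresponding $c$-affine function to sit strictly below $m_0$ on $S$), while every admissible function equals $u(X_0)$ at $X=X_0$. By Remark~\ref{rmk: csubdiff of c cone}, the hypothesis $\Xbar\in\csubdiff[\Omega]{K_{S,X_0}}{X_0}$ is then equivalent to admissibility of the $c$-affine function $V(X):=-c(X,\Xbar)+c(X_0,\Xbar)+u(X_0)$, i.e.\ to $V\leq m_0$ on $S$ together with $V(X_0)=u(X_0)$.

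Next I would slide $V$ by a constant: set $\alpha_*:=\min_{X\in\partial\Omega}(u-V)$, attained by compactness, with $\alpha_*\leq (u-V)(X_0)=0$. Any minimizer $X^*$ then satisfies $V+\alpha_*\leq u$ on $\partial\Omega$ with equality at $X^*$, which is precisely $\Xbar\in\csubdiff[\Omega]{u}{X^*}$, so the remaining task is to produce some minimizer lying in $S$. The case $\alpha_*=0$ gives $X^*=X_0\in S$ immediately, so from here I assume $\alpha_*<0$ and argue by contradiction that every minimizer lies outside $S$. Pick a minimizer $X^*\notin S$. From $\Xbar\in\bigcap_{X\in S}\partial\Omega^+(X)$ and the symmetry of $\partial\Omega^+$ we obtain $X_0\in\partial\Omega^+(\Xbar)$, while $\Xbar\in\csubdiff[\Omega]{u}{X^*}\subset\partial\Omega^+(X^*)$ gives $X^*\in\partial\Omega^+(\Xbar)$. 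Consequently the $c$-segment $Z_t$ with respect to $\Xbar$ from $X_0$ to $X^*$ is well-defined. Applying Proposition~\ref{prop: qqconv holds} with primal and dual roles swapped (valid by the symmetry of $c$) and with $\Xbar_0$ as secondary parameter yields
\begin{equation*}
m_0(Z_t)-V(Z_t)\leq (1-t)\bigl(m_0(X_0)-u(X_0)\bigr)+t\bigl(m_0(X^*)-V(X^*)\bigr),
\end{equation*}
an affine-in-$t$ bound that is strictly positive at $t=0$ and strictly negative at $t=1$.

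The hardest step, which I anticipate as the main obstacle, is turning this bound together with the sliding inequality $V+\alpha_*\leq u$ and the section bound $V\leq m_0$ on $S$ into the existence of a contact point of $V+\alpha_*$ and $u$ that actually lies in $S$. My plan is to set $\tau:=\sup\{t\in[0,1]:Z_t\in S\}$, note $u(Z_\tau)=m_0(Z_\tau)$ by continuity and $V(Z_\tau)\leq m_0(Z_\tau)$ from admissibility of $V$ on $S$, then use the QQConv inequality at $t=\tau$ to force $V(Z_\tau)\geq m_0(Z_\tau)$ and hence equality. Combined with $V+\alpha_*\leq u$, this equality should yield $V(Z_\tau)+\alpha_*=u(Z_\tau)$, making $Z_\tau$ a minimizer in $S$ and contradicting the standing assumption. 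Carefully arranging this balance---in particular, ruling out that the $c$-segment leaves $S$ before the QQConv bound changes sign---is where the quadratic structure of the cost and the geometric hypotheses on $\Xbar$ enter in a crucial way.
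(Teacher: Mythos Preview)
Your argument has a genuine gap, and it is not the one you flag as the ``hardest step''. Suppose you succeed in forcing $V(Z_\tau)\geq m_0(Z_\tau)$ via the QQConv bound. Combined with admissibility ($V\leq m_0$ on $S$) and $u(Z_\tau)=m_0(Z_\tau)$, you obtain $V(Z_\tau)=m_0(Z_\tau)=u(Z_\tau)$, hence $(u-V)(Z_\tau)=0$. But you are in the case $\alpha_*<0$, so $Z_\tau$ is \emph{not} a global minimizer of $u-V$ over $\partial\Omega$, and the intended contradiction (``a minimizer lies in $S$'') does not follow. The difficulty is structural: global sliding over $\partial\Omega$ gives a genuine $c$-support, but the minimizer need not lie in $S$; sliding only over $S$ keeps the contact point in $S$, but the resulting inequality $V-h\leq u$ holds only on $S$, which is not enough for $\Xbar\in\csubdiff[\Omega]{u}{\cdot}$. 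Your $c$-segment argument does not bridge this gap.

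The paper resolves this by sliding over $S$ only: set $h:=\sup_{S}(V-u)\geq 0$, pick $Y\in S$ realizing the supremum, and then, instead of asking for a global support, observe that $V-h$ touches $u$ from below \emph{locally} at $Y$. If $m_0(Y)>u(Y)$ then $Y$ lies in the relative interior of $S$, so in a chart around $Y$ one gets $[\Xbar]_Y\in\locsubdiff{(u\circ\exp^c_Y)}{0}$, and the local-to-global Corollary~\ref{cor: local to global} (which uses the hypothesis $\csubdiff[\Omega]{u}{X}\subset\partial\Omega^+(X)$) upgrades this to $\Xbar\in\csubdiff[\Omega]{u}{Y}$. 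If instead $m_0(Y)=u(Y)$, then $0\leq h=V(Y)-m_0(Y)\leq K_{S,X_0}(Y)-m_0(Y)\leq 0$, so $h=0$ and one runs the same local-to-global argument at $X_0$ (which is interior to $S$ since $m_0(X_0)>u(X_0)$). The key tool you are missing is precisely Corollary~\ref{cor: local to global}; once you invoke it, the $c$-segment analysis becomes unnecessary.
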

\begin{proof}
Let $\Xbar \in \csubdiff[\Omega]{K_{S, X_0}}{X_0} \cap \bigcap_{X \in S} \partial \Omega^+(X)$, and $m(X) := -c(X, \Xbar) +c(X_0, \Xbar) +u(X_0)$. We define
    \begin{equation*}
        h := \sup_{X \in S}( m(X) - u(X) ) \geq 0,
    \end{equation*}
    then $m - h \leq u$ on $S$. Moreover by compactness of $S$, there exists a $Y \in S$ such that
\begin{equation*}
    h = m(Y)- u(Y),
\end{equation*}
note we also have $\Xbar \in \partial \Omega^+(Y)$. Suppose $m_0(Y) > u(Y)$,  then $m_0>u$ in some small relatively open neighborhood of $Y$ in $\partial \Omega$. By Remark~\ref{rmk: uni-sized nbhd in proj},  the function $m(\cdot, \beta_Y(\cdot))-h-u(\cdot, \beta_Y(\cdot))$ is well-defined in some ball around $\proj^\Omega_Y(Y)$ which belongs to the relative interior of $\proj^\Omega_Y(S)$ by the above, thus we see the function has a local maximum there. Thus we can see that $[\Xbar]_{Y}=\nabla^\Omega m(Y)\in \locsubdiff{u(\cdot, \beta_Y(\cdot))}{\proj^\Omega_Y(Y)}=\locsubdiff{(u\circ\exp^c_Y)}{0}$. Since $\csubdiff[\Omega]{u}{Y} \subset\partial \Omega^+(Y)$ by assumption, we can then apply Corollary~\ref{cor: local to global} to conclude that $\Xbar \in \csubdiff[\Omega]{u}{Y} \subset \csubdiff[\Omega]{u}{S}$.

Otherwise, we have $m_0(Y) = u(Y)$ and
\begin{equation*}
    0\leq h = m(Y)-u(Y) = m(Y) - m_0(Y)\leq K_{S, X_0}(Y)-m_0(Y) \leq 0.
\end{equation*}
Therefore $h=0$, and in particular we have
\begin{align*}
    m(X)& \leq u(X) \textrm{ on } S, \qquad
    m(X_0) = u(X_0).
\end{align*}
Again since $\csubdiff[\Omega]{u}{X_0}\subset \partial \Omega^+(X_0)$ and $m_0(X_0)>u(X_0)$, we can apply the same argument as we did above with $Y$ to obtain that $\Xbar \in \csubdiff[\Omega]{u}{X_0} \subset \csubdiff[\Omega]{u}{S}$. 
\end{proof}
Next we recall a well-known fact about local subdifferentials, whose proof is provided for completeness.
\begin{lem}
\label{lem: Lip bounds subdiff}
Let $U$ be an $\R$-valued function that is Lipschitz on some neighborhood of a point $x\in \R^n$, with Lipschitz constant $L_U$. If $p\in\locsubdiff{U}{x}$, then $\lvert p\rvert \leq L_U$.
\end{lem}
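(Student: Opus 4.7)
The plan is to directly unpack the definition of the local subdifferential against the Lipschitz bound. By definition, $p\in\locsubdiff{U}{x}$ means that
\begin{align*}
U(x+v) \geq U(x) + \inner{p}{v} + o(\lvert v\rvert)\quad \text{as } v\to 0,
\end{align*}
while on a neighborhood $B^n_r(x)$ the Lipschitz hypothesis gives $U(x+v)-U(x)\leq L_U\lvert v\rvert$ for all $\lvert v\rvert<r$. Combining these two bounds yields $\inner{p}{v}+o(\lvert v\rvert)\leq L_U\lvert v\rvert$ for all sufficiently small $v$.

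The conclusion is trivial if $p=0$, so assume $p\neq 0$ and choose the test direction $v=tp/\lvert p\rvert$ with $t>0$ small. Then $\inner{p}{v}=t\lvert p\rvert$ and $\lvert v\rvert=t$, so after dividing by $t$ and letting $t\searrow 0^+$ the little-o term vanishes and we are left with $\lvert p\rvert\leq L_U$.

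There is no real obstacle here; the statement is standard and the argument is just a one-line substitution of the worst-case direction $v$ into the subgradient inequality. The only mildly subtle point to mention is that we need $x+v$ to lie in the Lipschitz neighborhood, which is automatic for all $t$ sufficiently small, so the limit as $t\searrow 0^+$ is legitimate.
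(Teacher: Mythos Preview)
Your proof is correct and follows essentially the same approach as the paper: combine the Lipschitz upper bound with the subdifferential lower bound, then substitute $v=t\,p/\lvert p\rvert$ and let $t\searrow 0$. The paper's argument is virtually identical, just written slightly more compactly.
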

\begin{proof}
    Without loss, assume $p\neq 0$. Since $p\in\locsubdiff{U}{x}$, we have
    \begin{equation*}
        L_U\geq \frac{U(x+v) - U(x)}{\lvert v\rvert} \geq \inner{p}{\frac{v}{\lvert v\rvert}} + \frac{o(\lvert v\rvert)}{\lvert v\rvert},\qquad v\to 0,
    \end{equation*}
    therefore letting $v= \epsilon \frac{p}{\lvert p\rvert}$ and taking $\epsilon \to 0$, we obtain $\lvert p\rvert \leq L_U$.
\end{proof}
The next lemma, which is the analogue of \cite[Lemma 5.5]{GuillenKitagawa17}, associates to each unit vector $w\in T_{\Xbar_0}\partial\Omega$ a point $\Xbar_w^*\in \partial\Omega$ belonging to $\csubdiff[\Omega]{K_{S, X_0}}{X_0}$ and such that $\proj^\Omega_{X_0}(\Xbar_w^*-\Xbar_0)$ retains information about both the original direction of $w$, and $\distop(x_0,\Pi^w_{[S]_{\Xbar_0}})$.
\begin{lem}\label{lem: csubdiff in small section est}
Suppose $u$ satisfies \eqref{eqn: csub in same side}, and \eqref{eqn: section in same side as slope}. Then, for any unit length $w \in T_{\Xbar_0}\partial\Omega$, and $X_0\in S$ with $m_0(X_0)>u(X_0)$, there exists $\Xbar_w^* \in \partial \Omega^+(X_0)$ such that 
\begin{align}
    \Xbar_w^* &\in \csubdiff[\Omega]{K_{S, X_0}}{X_0}\cap\bigcap_{X\in S}B^{n+1}_{\conerad(\frac{1}{2})}(X),\label{eqn: good slope in ball}\\
    \lvert [\Xbar_w^*]_{X_0} - [\Xbar_0]_{X_0} \rvert &\geq \frac{\conerad(\frac{1}{2})^2(m_0(X_0)-u(X_0))}{(\conerad(\frac{1}{2})^2+16\diam(\Omega)^2)\distop(x_0,\Pi^w_{[S]_{\Xbar_0}})},\label{eqn: csubdiff length est}\\
    \inner{[\Xbar_w^*]_{X_0} - [\Xbar_0]_{X_0}}{\proj^\Omega_{X_0}(w)} &> \frac{\conerad(\frac{1}{2})}{3\sqrt{\conerad(\frac{1}{2})^2+16\diam(\Omega)^2}} \lvert [\Xbar_w^*]_{X_0} - [\Xbar_0]_{X_0} \rvert,\label{eqn: csubdiff direction est}
\end{align} 
where the meaning of the last expression above is that we identify $T_{\Xbar_0}\partial \Omega$ with an $n$-dimensional subspace of $\R^{n+1}$, and $w$ with the corresponding vector in $\R^{n+1}$.
%%\begin{equation*}
%%h^n \leq C \diam(S)^{n-1}\dist{X_0}{\partial S}\mathcal{H}^n \partial_c u(S)
%%\end{equation*}
\end{lem}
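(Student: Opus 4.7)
The approach mirrors the construction in \cite[Lemma 5.5]{GuillenKitagawa17}: we will tilt a $c$-affine support of $m_0$ at $\Xbar_0$ in a direction related to $w$ until it first touches $m_0$ somewhere on $S$. Since $X_0 \in S \subset B^{n+1}_{\conerad(35/36)/8}(\Xbar_0)$ by \eqref{eqn: section in same side as slope}, Lemma~\ref{lem: interior cone} yields $\inner{\normal{\Omega}{X_0}}{\normal{\Omega}{\Xbar_0}} > 35/36$. Since $\inner{w}{\normal{\Omega}{\Xbar_0}} = 0$, a direct decomposition gives $\abs{\proj^\Omega_{X_0}(w)}^2 \geq 17/18$, so we may define the unit vector $v := \proj^\Omega_{X_0}(w)/\abs{\proj^\Omega_{X_0}(w)} \in T_{X_0}\partial\Omega$.

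For $t \geq 0$ within the domain of the $c$-exponential map, define $\Xbar_t := \cExp{X_0}{[\Xbar_0]_{X_0} + tv}$, so that $\Xbar_t = \Xbar_0$ at $t=0$ and $[\Xbar_t]_{X_0} - [\Xbar_0]_{X_0} = tv$ by construction. Let
\[
t^* := \sup\curly{t \geq 0 \mid \Xbar_s \in \partial\Omega^+(X_0) \text{ and } -c(\cdot, \Xbar_s)+c(X_0,\Xbar_s)+u(X_0)\leq m_0 \text{ on } S,\ \forall s\in[0,t]}.
\]
At $t=0$ the function on the left equals $m_0-(m_0(X_0)-u(X_0))$, strictly less than $m_0$ by hypothesis; continuity and compactness of $S$ then give $t^*>0$. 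Applying Lemma~\ref{lem: unif size nbhd lip beta} with $\Theta = 1/2$ (valid since $\abs{[\Xbar_0]_{X_0}} \leq \conerad(35/36)/8 < \conerad(1/2)/4$) provides a Lipschitz control on $\beta_{X_0}$, from which one obtains $t^* < \infty$ together with a contact point $Y^*\in S$ at $t=t^*$. Expanding $-c(Y^*, \Xbar_{t^*}) + c(X_0, \Xbar_{t^*}) + u(X_0) = -c(Y^*, \Xbar_0) + \tilde h$ via the quadratic form of $c$ then produces the key identity
\[
\inner{Y^* - X_0}{\Xbar_{t^*} - \Xbar_0} = m_0(X_0) - u(X_0).
\]

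We set $\Xbar_w^* := \Xbar_{t^*}$. Conclusion \eqref{eqn: good slope in ball} follows from Remark~\ref{rmk: csubdiff of c cone} combined with the upper bound on $t^*$ and the estimate $\abs{\Xbar_{t^*}-\Xbar_0}\leq t^*\sqrt{1+L_{1/2}^2}$ (where $L_{1/2}:=4\diam(\Omega)/\conerad(1/2)$ from Lemma~\ref{lem: unif size nbhd lip beta}) via the triangle inequality applied to any $X\in S\subset B^{n+1}_{\conerad(35/36)/8}(\Xbar_0)$. Since $[\Xbar_w^*]_{X_0}-[\Xbar_0]_{X_0}=t^*v$, the direction estimate \eqref{eqn: csubdiff direction est} is immediate with ratio $\abs{\proj^\Omega_{X_0}(w)}\geq \sqrt{17/18}$, comfortably exceeding the stated right-hand side. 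For the length estimate \eqref{eqn: csubdiff length est}, working in coordinates where $\normal{\Omega}{X_0}=e_{n+1}$ and writing both $Y^*$ and $\Xbar_{t^*}$ as graphs over $\beta_{X_0}$, the key identity expands as $t^*\inner{[Y^*]_{X_0}}{v}$ plus a product of two $\beta_{X_0}$-differences, each controlled by $L_{1/2}$ times the respective horizontal displacement. The main obstacle will then be to convert this to a bound of the form $(1+L_{1/2}^2)\,t^*\,d([X_0]_{\Xbar_0},\Pi^w_{[S]_{\Xbar_0}})$: this requires a careful change of basis between $T_{X_0}\partial\Omega$ and $T_{\Xbar_0}\partial\Omega$ to relate $\inner{[Y^*]_{X_0}}{v}$ to $\inner{[Y^*]_{\Xbar_0}-[X_0]_{\Xbar_0}}{w}$, the latter being bounded by $d([X_0]_{\Xbar_0},\Pi^w_{[S]_{\Xbar_0}})$ since $[Y^*]_{\Xbar_0}\in[S]_{\Xbar_0}$ and $\Pi^w_{[S]_{\Xbar_0}}$ is supporting. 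The numerical prefactor $\conerad(1/2)^2/(\conerad(1/2)^2+16\diam(\Omega)^2)$ then emerges via the identity $1+L_{1/2}^2=(\conerad(1/2)^2+16\diam(\Omega)^2)/\conerad(1/2)^2$.
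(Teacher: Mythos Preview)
Your construction differs from the paper's in a way that creates a real gap in the length estimate \eqref{eqn: csubdiff length est}. You slide along the $c$-segment $\Xbar_t=\cExp{X_0}{[\Xbar_0]_{X_0}+tv}$ based at $X_0$ and let $Y^*\in S$ be the first contact point. Expanding the key identity in $X_0$-coordinates gives
\[
m_0(X_0)-u(X_0)=t^*\inner{Y^*-X_0}{v}+\bigl(\beta_{X_0}(\xbar_0+t^*v)-\beta_{X_0}(\xbar_0)\bigr)\inner{Y^*-X_0}{\normal{\Omega}{X_0}},
\]
and both the change of basis relating $\inner{Y^*-X_0}{v}$ to $\inner{[Y^*]_{\Xbar_0}-[X_0]_{\Xbar_0}}{w}$ and the height cross-term introduce errors of size $C\,t^*\,\lvert[Y^*]_{X_0}\rvert$, not $C\,t^*\,d([X_0]_{\Xbar_0},\Pi^w_{[S]_{\Xbar_0}})$. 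Since $Y^*$ is merely the maximizer of $X\mapsto\inner{X-X_0}{\Xbar_{t^*}-\Xbar_0}$ over $S$, and $S$ may be arbitrarily eccentric, $\lvert[Y^*]_{X_0}\rvert$ is not controlled by $d([X_0]_{\Xbar_0},\Pi^w_{[S]_{\Xbar_0}})$. The ``careful change of basis'' you allude to cannot absorb this; the error is genuinely of the wrong order. For the same reason, you have no a priori upper bound on $t^*$, so \eqref{eqn: good slope in ball} is also unjustified, and it is not even clear that $t^*$ corresponds to a contact point rather than to $\Xbar_t$ exiting $\partial\Omega^+(X_0)$.

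The paper avoids this by a different construction: it first goes to the boundary point $X_w\in\partial S$ where $w$ is the outward normal to $[S]_{\Xbar_0}$, uses the Clarke subdifferential there together with Corollary~\ref{cor: local to global} to produce $\Xbar_w\in\csubdiff[\Omega]{u}{X_w}$, and then takes $\Xbar_w^*$ on the $c$-segment from $\Xbar_0$ to $\Xbar_w$ \emph{with respect to $X_w$} (not $X_0$), chosen so that the associated $c$-affine function passes through $(X_0,u(X_0))$. This pins the contact at $X_w$; the length comparison is then done against a point $X_1$ lying on the ray $x_0+\R_+ w$ and on the boundary of a related convex set $[S_w]$, so that $\lvert x_1-x_0\rvert\le d(x_0,\Pi^w_{[S]_{\Xbar_0}})$ by construction. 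The endpoint control $\Xbar_0,\Xbar_w\in B^{n+1}_{\conerad(35/36)/8}(X_w)$ from \eqref{eqn: csub in same side}--\eqref{eqn: section in same side as slope} then gives \eqref{eqn: good slope in ball} for free by convexity of $\lvert\cdot-X_w\rvert^2$ along the $c$-segment. Your direction estimate \eqref{eqn: csubdiff direction est} would indeed be much cleaner than the paper's (since $[\Xbar_w^*]_{X_0}-[\Xbar_0]_{X_0}=t^*v$ is exactly parallel to $\proj^\Omega_{X_0}(w)$), but the length estimate is where the argument is doing work, and there the location of the contact point is essential.
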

\begin{proof}
By \eqref{eqn: section in same side as slope} we have $S\subset \partial\Omega^+(\Xbar_0)$, hence $\exp^c_{\Xbar_0}$ is well-defined on $[S]_{\Xbar_0}$ (throughout the proof we will freely use the fact that $\conerad$ is taken to be a decreasing function). Fix a unit vector $w \in  T_{\Xbar_0}\partial\Omega$ and take $x_w \in \partial^n [S]_{\Xbar_0}$ such that $w \in \normal{[S]_{\Xbar_0}}{x_w}$. Let us write $X_w := \cExp{\Xbar_0}{x_w}$ and also define $U_0(x) := u(\cExp{\Xbar_0}{x})-m_0(\cExp{\Xbar_0}{x})$. 
 Noting that $\cExp{\Xbar_0}{\cdot}$ is differentiable on $[\partial \Omega^+(\Xbar_0)]_{\Xbar_0}$, the proof of the second part of Corollary~\ref{cor: local to global} and \cite[Theorem 2.5.1]{clarke90} implies that the local subdifferential and generalized (Clarke) subdifferential of $U_0$ at a point are the same. Therefore, by \cite[Corollary 1 to Theorem 2.4.7,  and Proposition 2.4.4]{clarke90}, we obtain that for some $\lambda>0$, 
\begin{equation*}
\lambda w \in \locsubdiff{U_0}{x_w},
\end{equation*}
i.e. we have the inequality
\begin{equation}
\label{eqn: subdiff of U at x_w}
    U_0(x_w + v) \geq U_0(x_w) + \inner{\lambda w}{v} + o(\lvert v\rvert),\qquad v\to 0.
\end{equation}
Now define $v: [\partial \Omega^+(X_w)]_{X_w}\to [\partial \Omega^+(\Xbar_0)]_{\Xbar_0}-x_w$ by 
\begin{align*}
v(p) :&= [\cExp{X_w}{p}]_{\Xbar_0}-x_w
=\proj^\Omega_{\Xbar_0}(p+\proj^\Omega_{X_w}(X_w)+\beta_{X_w}(p+\proj^\Omega_{X_w}(X_w))\normal{\Omega}{X_w}-\Xbar_0)-x_w\\
&=p+\proj^\Omega_{X_w}(X_w)+\beta_{X_w}(p+\proj^\Omega_{X_w}(X_w))\normal{\Omega}{X_w}-\Xbar_0\\
&-\inner{p+\proj^\Omega_{X_w}(X_w)+\beta_{X_w}(p+\proj^\Omega_{X_w}(X_w))\normal{\Omega}{X_w}-\Xbar_0}{\normal{\Omega}{\Xbar_0}}\normal{\Omega}{\Xbar_0}-x_w,   
\end{align*}
which we can see is differentiable in $p$. Let  $p\in T_0(T_{X_w}\partial\Omega)\cong T_{X_w}\partial\Omega$ with $p\neq 0$, then we calculate using $\nabla^n \beta_{X_w}(\proj_{X_w}^\Omega(X_w))=0$,
\begin{align*}
    v(p)&=v(0)+Dv(0)p+o(\lvert p\rvert)
    =\left.\frac{d}{dt}v(tp)\right\vert_{t=0}+o(\lvert p\rvert)\\
    &=p+\inner{\nabla^n\beta_{X_w}(\proj^\Omega_{X_w}(X_w))}{p}\normal{\Omega}{X_w}\\
    &\quad-\inner{p+\inner{\nabla^n\beta_{X_w}(\proj^\Omega_{X_w}(X_w))}{p}\normal{\Omega}{X_w}}{\normal{\Omega}{\Xbar_0}}\normal{\Omega}{\Xbar_0}+o(\lvert p\rvert)\\
    &=p-\inner{p}{\normal{\Omega}{\Xbar_0}}\normal{\Omega}{\Xbar_0}+o(\lvert p\rvert)
\end{align*}
and combined with \eqref{eqn: derivative cexp} we have
\begin{align}
    U_0(x_w + v(p)) &\geq U_0(x_w) + \inner{\lambda w}{v(p)} + o(\lvert p\rvert),\qquad p\to 0\notag\\
    &=U_0(x_w) + \inner{\lambda w}{p-\inner{p}{\normal{\Omega}{\Xbar_0}}\normal{\Omega}{\Xbar_0} } + o(\lvert p\rvert),\qquad p\to 0\notag\\
    &=U_0(x_w) + \inner{\lambda \proj^\Omega_{X_w}(w)}{p} + o(\lvert p\rvert),\qquad p\to 0.\label{eqn: subdiff of U circ v}
    % &=U_0(x_w) + \inner{\lambda w}{-{D^\Omega}^2_{\Xbar X} c(X_w, \Xbar_0) p } + o(\lvert p\rvert),\qquad p\to 0.\label{eqn: subdiff of U circ v}
\end{align}
Moreover we compute for $p$ close to $0$
\begin{align*}
    &m_0(\cExp{\Xbar_0}{x_w+v(p)})-m_0(\cExp{\Xbar_0}{x_w})  = -c(\cExp{\Xbar_0}{x_w+v(p)},\Xbar_0)+c(\cExp{\Xbar_0}{x_w},\Xbar_0) \\
    & = -c(\cExp{X_w}{p},\Xbar_0)+c(X_w,\Xbar_0)
    =\inner{\cExp{X_w}{p}-X_w}{\Xbar_0}+\frac{\lvert X_w\rvert^2-\lvert\cExp{X_w}{p}\rvert^2}{2}\\
    &=\inner{p+(\beta_{X_w}(\proj_{X_w}^\Omega(X_w)+p)-\beta_{X_w}(\proj_{X_w}^\Omega(X_w)))\normal{\Omega}{X_w}}{\Xbar_0}-\inner{p}{\proj_{X_w}^\Omega(X_w)}\\
    &+\frac{\beta_{X_w}(\proj_{X_w}^\Omega(X_w))^2-\beta_{X_w}(\proj_{X_w}^\Omega(X_w)+p)^2-\lvert p\rvert^2}{2},
\end{align*}
hence we obtain that 
\begin{equation*}
    m_0(\cExp{\Xbar_0}{x_w+v(p)})-m_0(\cExp{\Xbar_0}{x_w}) = \inner{p}{[\Xbar_0]_{X_w}} + o(\lvert p\rvert),\qquad p\to 0.
\end{equation*}
Therefore, from \eqref{eqn: subdiff of U circ v}, we obtain
\begin{equation*}
    u(\cExp{X_w}{p}) \geq u(\cExp{X_w}{0}) + \inner{[\Xbar_0]_{X_w}-\lambda  \proj^\Omega_{X_w}(w)}{p}+o(\lvert p\rvert),
\end{equation*}
%where $*$ denotes the transpose, 
in particular we have
\begin{equation*}
    [\Xbar_0]_{X_w}-\lambda \proj^\Omega_{X_w}(w) \in \locsubdiff{(u\circ \exp^c_{X_w})}{0}.
\end{equation*}
By Corollary~\ref{cor: local to global}, this means 
$[\Xbar_0]_{X_w} - \lambda %{D^\Omega}^2_{\Xbar X} c(X_w, \Xbar_0)^* 
\proj^\Omega_{X_w}(w)\in [\partial\Omega^+(X_w)]_{X_w}$, hence we can define
\begin{equation*}
   \Xbar_w := \cExp{X_w}{ [\Xbar_0]_{X_w} - \lambda %{D^\Omega}^2_{\Xbar X} c(X_w, \Xbar_0)^* 
   \proj^\Omega_{X_w}(w) } \in \partial \Omega^+(X_w),
\end{equation*}
and $\Xbar_w \in \csubdiff[\Omega]{u}{X_w}$. Thus 
\begin{equation*}
    u(X) \geq -c(X,\Xbar_w ) + c(X_w, \Xbar_w) +u(X_w),\  \forall X \in \partial \Omega.
\end{equation*}
On the other hand, by definition of $S$ and since $m_0(X_w) = u(X_w)$, we have
\begin{equation*}\label{eqn: m_0 another formula}
    u(X_0) \leq m_0(X_0) = -c(X_0, \Xbar_0) + c(X_w, \Xbar_0) +u(X_w).
\end{equation*}
Since $X_w\in S$ and $\Xbar_w\in \csubdiff[\Omega]{u}{X_w}$, by \eqref{eqn: section in same side as slope} and \eqref{eqn: csub in same side} we have $X_w\in \partial\Omega^+(\Xbar_0)\cap \partial\Omega^+(\Xbar_w)$, hence we can define a $c$-segment with respect to $X_w$ that connects $\Xbar_0$ and $\Xbar_w$, let $\Xbar_w(t)$ be this $c$-segment. By the intermediate value theorem, there exists some $t^* \in [0,1]$ such that 
\begin{equation}\label{eqn: def Xbarw*}
    u(X_0) = -c(X_0, \Xbar_w(t^*)) + c(X_w, \Xbar_w(t^*)) +u(X_w),
\end{equation}
denote $\Xbar_w^* := \Xbar_w(t^*)$. Then for $X\in S$ , using that $u(X_w)=m_0(X_w)$, Proposition~\ref{prop: qqconv holds},   \eqref{eqn: def Xbarw*}, and that $\Xbar_w\in \csubdiff[\Omega]{u}{X_w}$,  we compute
\begin{align*}
    -c(X, \Xbar_w^*)+c(X_0, \Xbar_w^*)
    &\leq -c(X, \Xbar_0)+c(X_w, \Xbar_0)-c(X_w, \Xbar_w^*)+c(X_0, \Xbar_w^*)\\
    &+t^*(-c(X, \Xbar_w)+c(X_w, \Xbar_w)+c(X, \Xbar_0)-c(X_w, \Xbar_0))\\
    &=-c(X, \Xbar_0)+c(X_w, \Xbar_0)+u(X_w)-u(X_0)\\
    &+t^*(-c(X, \Xbar_w)+c(X_w, \Xbar_w)+u(X_w)-m_0(X))\\
    &\leq -c(X, \Xbar_0)+c(X_w, \Xbar_0)+u(X_w)-u(X_0)+t^*(u(X)-m_0(X))\\
    &\leq -c(X, \Xbar_0)+c(X_w, \Xbar_0)+m_0(X_w)-u(X_0)=m_0(X)-u(X_0).
\end{align*}
Thus from Remark~\ref{rmk: csubdiff of c cone} we find that $\Xbar_w^* \in \csubdiff[\Omega]{K_{S, X_0}}{X_0}$.

Now we can see
\begin{align*}
   \lvert \Xbar_w(t)-X_w\rvert^2&=\lvert  [\Xbar_0]_{X_w}-t\lambda %{D^\Omega}^2_{X\Xbar}c(X_w,\Xbar_0)^*
   \proj^\Omega_{X_w}(w)\rvert^2\\
   &+\left(\beta_{X_w}(\proj_{X_w}(X_w)+[\Xbar_0]_{X_w}-t\lambda %{D^\Omega}^2_{X\Xbar}c(X_w,\Xbar_0)^*
   \proj^\Omega_{X_w}(w))-\beta_{X_w}(\proj_{X_w}(X_w))\right)^2
\end{align*}
is the sum of a quadratic function and the square of a negative, concave function, hence is convex in $t$. In particular, since 
    $\Xbar_w^* = \cExp{X_w}{[\Xbar_0]_{X_w}-t^*\lambda %{D^\Omega}^2_{X\Xbar}c(X_w,\Xbar_0)^*
    \proj^\Omega_{X_w}(w)}$, 
we obtain
\begin{align*}
    \lvert \Xbar_w^*-X_w\rvert\leq\max( \lvert \Xbar_w-X_w\rvert, \lvert \Xbar_0-X_w\rvert)<\frac{\conerad(\frac{1}{2})}{4}
\end{align*}
by \eqref{eqn: csub in same side} and \eqref{eqn: section in same side as slope}. 
Thus for any $Y \in S$, using \eqref{eqn: section in same side as slope} we have
\begin{align*}
    \lvert \Xbar_w^* - Y \rvert \leq \lvert \Xbar_w^* - X_w\rvert+\lvert X_w-\Xbar_0 \rvert + \lvert \Xbar_0 - Y \rvert  < \conerad(\frac{1}{2}),
\end{align*}
hence we obtain \eqref{eqn: good slope in ball}.

Now, we estimate the size of $[\Xbar_w^*]_{X_0} - [\Xbar_0]_{X_0}=\proj^\Omega_{X_0}(\Xbar_w^*-\Xbar_0)$. 
Let 
\begin{align*}
    x_0 :&= [X_0]_{\Xbar_0},\qquad
    m_w(X) := -c(X,\Xbar_w^*) + c(X_w, \Xbar_w^*) +u(X_w),\\
    [S_w] :&= \{ x \in [\partial\Omega^+(\Xbar_0)]_{\Xbar_0} \mid m_0(\cExp{\Xbar_0}{x}) \geq m_w(\cExp{\Xbar_0}{x})\}.
\end{align*}
Note that $x_0 \in [S]_{\Xbar_0} \subset [S_w] \neq \emptyset$. Since $m_w$ is a $c$-convex function, by Corollary \ref{cor: csubdiff c-convex} (1) we have that $[S_w]$ is convex.

Since $m_0(X_w)=u(X_w)=m_w(X_w)$ by construction, we have $x_w\in \partial^n[S_w]$. Then by taking the derivative of $m_0(\cExp{\Xbar_0}{x}) - m_w(\cExp{\Xbar_0}{x})$ at $x = x_w$, we observe that $w\in\normal{[S_w]}{x_w}$ and  $\Pi^w_{[S]_{\Xbar_0}}$ is an $(n-1)$-dimensional supporting plane of $[S_w]$ at $x_w$. Then there exists a point $x_1 \in \{ x_0+tw \mid t\geq 0 \} \cap \partial^n [S_w]$ such that $ \inner{x_1-x_w}{w} \leq 0$. First suppose $x_1\in [\partial\Omega^+(\Xbar_0)]_{\Xbar_0}$. Thanks to \eqref{eqn: section in same side as slope}, we have $\lvert x_0-\proj^\Omega_{\Xbar_0}(\Xbar_0)\lvert \leq \vert X_0-\Xbar_0\rvert<\frac{\conerad(\frac{1}{2})}{4}$ 
hence letting $X_1 := \cExp{\Xbar_0}{x_1}$ and using Lemma~\ref{lem: unif size nbhd lip beta} we compute  
\begin{align*}
    -c(X_1, \Xbar_w^*) + c(X_0,\Xbar_w^*) + c(X_1,\Xbar_0) - c(X_0, \Xbar_0) 
   & =  \inner{\cExp{\Xbar_0}{x_1}-\cExp{\Xbar_0}{x_0}}{\Xbar_w^* - \Xbar_0} \\
   & \leq  \sqrt{\lvert x_1 -x_0\rvert^2+(\beta_{\Xbar_0}(x_1)-\beta_{\Xbar_0}(x_0))^2} \lvert \Xbar_w^* - \Xbar_0 \rvert \\
   & \leq  \frac{\sqrt{\conerad(\frac{1}{2})^2+16\diam(\Omega)^2}}{\conerad(\frac{1}{2})} \lvert x_1 -x_0\rvert\lvert \Xbar_w^* - \Xbar_0 \rvert\\
   &\leq  \frac{\sqrt{\conerad(\frac{1}{2})^2+16\diam(\Omega)^2}}{\conerad(\frac{1}{2})}\lvert \Xbar_w^* - \Xbar_0 \rvert \distop(x_0,\Pi^w_{[S]_{\Xbar_0}}).
\end{align*}
On the other hand, since $x_1 \in \partial^n [S_w]$ and $m_w\leq u$ everywhere we obtain
\begin{align*}
    -c(X_1, \Xbar_w^*) + c(X_0,\Xbar_w^*) + c(X_1,\Xbar_0) - c(X_0, \Xbar_0) &= m_w(X_1)-m_w(X_0)-m_0(X_1)+m_0(X_0)\\
    &=m_0(X_0)-m_w(X_0)\geq m_0(X_0)-u(X_0).
\end{align*}
Also by \eqref{eqn: section in same side as slope} we have $\lvert \proj^\Omega_{X_0}(\Xbar_0)-\proj^\Omega_{X_0}(X_0)\lvert <\frac{\conerad(\frac{1}{2})}{4}$, thus by Lemma~\ref{lem: unif size nbhd lip beta} we have
\begin{align}
    \lvert \Xbar_w^* - \Xbar_0 \rvert&= \sqrt{\lvert \proj^\Omega_{X_0}(\Xbar_w^*)-\proj^\Omega_{X_0}(\Xbar_0)\rvert^2+(\beta_{X_0}(\proj^\Omega_{X_0}(\Xbar_w^*))-\beta_{X_0}(\proj^\Omega_{X_0}(\Xbar_0)))^2}  \notag\\
   & \leq  \frac{\sqrt{\conerad(\frac{1}{2})^2+16\diam(\Omega)^2}}{\conerad(\frac{1}{2})} \lvert \proj^\Omega_{X_0}(\Xbar_w^*) -\proj^\Omega_{X_0}(\Xbar_0)\rvert\notag\\
   &=\frac{\sqrt{\conerad(\frac{1}{2})^2+16\diam(\Omega)^2}}{\conerad(\frac{1}{2})}\lvert [\Xbar_w^*]_{X_0}-[\Xbar_0]_{X_0}\rvert.\label{eqn: support difference bound}
\end{align}
Therefore, combining the above yields \eqref{eqn: csubdiff length est}:
\begin{equation*}
   \lvert [\Xbar_w^*]_{X_0} - [\Xbar_0 ]_{X_0} \rvert \geq \frac{\conerad(\frac{1}{2})\lvert \Xbar_w^*  - \Xbar_0 \rvert}{\sqrt{\conerad(\frac{1}{2})^2+16\diam(\Omega)^2}} \geq \frac{\conerad(\frac{1}{2})^2(m_0(X_0)-u(X_0))}{(\conerad(\frac{1}{2})^2+16\diam(\Omega)^2)\distop(x_0,\Pi^w_{[S]_{\Xbar_0}})}.
\end{equation*}
If $x_1=x_0+t_0w\in \partial^n[\partial\Omega^+(\Xbar_0)]_{\Xbar_0}$, take a sequence $t_k\nearrow t_0$, since $x_0$ is in the interior of $[\partial\Omega^+(\Xbar_0)]_{\Xbar_0}$ we have $x_0+t_kw\in [\partial\Omega^+(\Xbar_0)]_{\Xbar_0}$. We can then follow the above proof with this sequence of points replacing $x_1$ and take a limit to obtain \eqref{eqn: csubdiff length est}.

Now we estimate the direction of $[\Xbar_w^*]_{X_0}-[\Xbar_0]_{X_0}$. Here, recall we identify the tangent plane $ T_{\Xbar_0}\partial \Omega$ with an $n$-dimensional subspace of $\R^{n+1}$ and consider $w$ as living in $\R^{n+1}$. Let $\xbar_w(t) := [\Xbar_0]_{X_w}-t\lambda %{D^\Omega}^2_{X\Xbar}c(X_w,\Xbar_0)
\proj^\Omega_{X_w}(w)=[\Xbar_w(t)]_{X_w}$, then we can compute
\begin{align*}
    &\inner{[\Xbar_w^*]_{X_0} - [\Xbar_0]_{X_0}}{\proj^\Omega_{X_0}(w)} \\
    &= \inner{\Xbar_w^* - \Xbar_0}{\proj^\Omega_{X_0}(w)} 
     = \inner{\cExp{X_w}{\xbar_w(t^*)}-\cExp{X_w}{\xbar_w(0)}}{\proj^\Omega_{X_0}(w)} \\
    & = \lambda \int_0^{t^*} \inner {D \cExp{X_w}{\xbar_w(t)} 
    \proj^\Omega_{X_w}(w)
    }{\proj^\Omega_{X_0}(w)} dt\\
     & = \lambda \int_0^{t^*} \inner {\proj^\Omega_{X_w}(w)+\inner{\nabla^n\beta_{X_w}(\proj^\Omega_{X_w}(X_w)+\xbar_w(t))}{\proj^\Omega_{X_w}(w)}\normal{\Omega}{X_w}}{\proj^\Omega_{X_0}(w)} dt,
\end{align*}
where we have used  
\eqref{eqn: derivative cexp} to obtain the final line. Now we find
\begin{align*}
    & \lambda \inner {\inner{\nabla^n\beta_{X_w}(\proj^\Omega_{X_w}(X_w)+\xbar_w(t))}{\proj^\Omega_{X_w}(w)}\normal{\Omega}{X_w}}{\proj^\Omega_{X_0}(w)}\\
     & \geq - \lvert \inner{\nabla^n\beta_{X_w}(\proj^\Omega_{X_w}(X_w)+\xbar_w(t))}{\lambda\proj^\Omega_{X_w}(w)}\rvert \\
     & = -\lvert \frac{d}{dt}\beta_{X_w}(\proj^\Omega_{X_w}(X_w)+\xbar_w(t))\rvert\\
     &\geq -\max\left(\lvert \left.\frac{d}{dt}\beta_{X_w}(\proj^\Omega_{X_w}(X_w)+\xbar_w(t))\right\vert_{t=0}\rvert, \lvert \left.\frac{d}{dt}\beta_{X_w}(\proj^\Omega_{X_w}(X_w)+\xbar_w(t))\right\vert_{t=1}\rvert\right),
\end{align*}
where to obtain the fourth line we use that $t\mapsto \beta_{X_w}(\proj^\Omega_{X_w}(X_w)+\xbar_w(t))$ is  a concave function, hence has decreasing derivative. Continuing the calculation, by \eqref{eqn: csub in same side} and \eqref{eqn: section in same side as slope}, we have $\Xbar_w$, $\Xbar_0\in B^{n+1}_{\conerad(\frac{35}{36})}(X_w)$ 
hence by Lemma~\ref{lem: gradient beta est},
\begin{align*}
    &\max\left(\lvert \left.\frac{d}{dt}\beta_{X_w}(\proj^\Omega_{X_w}(X_w)+\xbar_w(t))\right\vert_{t=0}\rvert, \lvert \left.\frac{d}{dt}\beta_{X_w}(\proj^\Omega_{X_w}(X_w)+\xbar_w(t))\right\vert_{t=1}\rvert\right)\\
    &\leq \lambda\max\left(\lvert \nabla^n\beta_{X_w}(\proj^\Omega_{X_w}(X_w)+\xbar_w(0))\rvert, \lvert \nabla^n\beta_{X_w}(\proj^\Omega_{X_w}(X_w)+\xbar_w(1))\rvert\right)\\
    &\leq \frac{\lambda\sqrt{1-\left(\frac{35}{36}\right)^2}}{\frac{35}{36}}
    %=\lambda\sqrt{1-\frac{35}{36}}\frac{\sqrt{1+\frac{35}{36}}}{\frac{35}{36}}
    \leq \frac{\lambda}{3},
\end{align*}
thus 
\begin{align}\label{eqn: inner product estimate}
    \inner{[\Xbar_w^*]_{X_0} - [\Xbar_0]_{X_0}}{\proj^\Omega_{X_0}(w)}\geq t^*\lambda\left(\inner {\proj^\Omega_{X_w}(w)}{\proj^\Omega_{X_0}(w)}-\frac{1}{3}\right).
\end{align}
Now we calculate 
\begin{align*}
    &\inner {\proj^\Omega_{X_w}(w)}{\proj^\Omega_{X_0}(w)}=\inner{w-\inner{w}{\normal{\Omega}{X_w}}\normal{\Omega}{X_w}}{w-\inner{w}{\normal{\Omega}{X_0}}\normal{\Omega}{X_0}}\\
    &=1-\inner{w}{\normal{\Omega}{X_w}}^2-\inner{w}{\normal{\Omega}{X_0}}^2-\inner{w}{\normal{\Omega}{X_w}}\inner{w}{\normal{\Omega}{X_0}}\inner{\normal{\Omega}{X_w}}{\normal{\Omega}{X_0}}.
\end{align*}
Since $w\in T_{\Xbar_0}\partial \Omega$ and is unit length, we have $\lvert w-\normal{\Omega}{\Xbar_0} \rvert = \sqrt{2}$, also by \eqref{eqn: section in same side as slope} and the definition of $\conerad$, we have $\lvert \normal{\Omega}{X_w}-\normal{\Omega}{\Xbar_0}\rvert\leq \sqrt{2-2\cdot\frac{35}{36}}$. Hence we obtain
\begin{equation*}
    \lvert w-\normal{\Omega}{X_w} \rvert \geq \lvert w-\normal{\Omega}{\Xbar_0} \rvert - \lvert \normal{\Omega}{\Xbar_0} - \normal{\Omega}{X_w} \rvert > \sqrt2 - \sqrt{2-2\cdot\frac{35}{36}} .
\end{equation*}
so that
\begin{align*}
    \inner{w}{\normal{\Omega}{X_w}}<2\sqrt{1-\frac{35}{36}}-\left(1-\frac{35}{36}\right)\leq 2\sqrt{1-\frac{35}{36}}.
\end{align*}
Applying \eqref{eqn: section in same side as slope} to $X_0$ shows $\lvert X_0-\Xbar_0\rvert<\conerad(\frac{35}{36})$, thus the same calculation as above yields $\inner{w}{\normal{\Omega}{X_0}}<2\sqrt{1-\frac{35}{36}}$. Then we compute
\begin{align*}
    \inner {\proj^\Omega_{X_w}(w)}{\proj^\Omega_{X_0}(w)}\geq 1-12\left(1-\frac{35}{36}\right)=\frac{2}{3}
\end{align*}
and combining with \eqref{eqn: inner product estimate} we obtain
\begin{equation}\label{eqn: direction bdd below t*lambda}
    \inner{[\Xbar_w^*]_{X_0} - [\Xbar_0]_{X_0}}{\proj^\Omega_{X_0}(w)} >  \frac{t^*\lambda}{3}.
\end{equation}
On the other hand, again by \eqref{eqn: section in same side as slope} we have $\Xbar_0\in B^{n+1}_{\frac{\conerad(\frac{1}{2})}{4}}(X_w)$, hence by Lemma~\ref{lem: unif size nbhd lip beta} and a calculation analogous to \eqref{eqn: support difference bound} with $\beta_{X_w}$ replacing $\beta_{X_0}$, we obtain
\begin{align*}
    \lvert [\Xbar_w^*]_{X_0} - [\Xbar_0]_{X_0} \rvert & = \lvert \proj^\Omega_{X_0}(\Xbar_w^* - \Xbar_0) \rvert 
    \leq \lvert \Xbar_w^*-\Xbar_0\rvert\\
    &=\frac{\sqrt{\conerad(\frac{1}{2})^2+16\diam(\Omega)^2}}{\conerad(\frac{1}{2})} \lvert \proj^\Omega_{X_w}(\Xbar_w^*) -\proj^\Omega_{X_w}(\Xbar_0)\rvert\\
    & =\frac{\sqrt{\conerad(\frac{1}{2})^2+16\diam(\Omega)^2}}{\conerad(\frac{1}{2})} \lvert -t^*\lambda 
    \proj^\Omega_{X_w}(w)\rvert \\
    & \leq \frac{\sqrt{\conerad(\frac{1}{2})^2+16\diam(\Omega)^2}}{\conerad(\frac{1}{2})}t^*\lambda.
\end{align*}
Finally combining with \eqref{eqn: direction bdd below t*lambda} we obtain \eqref{eqn: csubdiff direction est}.
\end{proof}

Recall the projected section $[S]_{\Xbar_0}$ is convex by Corollary~\ref{cor: csubdiff c-convex}. Then there is an ellipsoid $\mathcal{E}\subset T_{\Xbar_0}\partial\Omega$ called the John ellipsoid such that
\begin{align}\label{eqn: john ellipsoid}
    \mathcal{E} \subset [S]_{\Xbar_0} \subset n \mathcal{E},
\end{align}
where the dilation is with respect to the center of mass of $\mathcal{E}$ (see \cite{deGuzman76}).

Fix a unit length $w_1 \in T_{\Xbar_0}\partial \Omega$. Take an orthonormal basis $\{ e_i \}_1^{n}$ which consist of directions parallel to the axial directions of $\mathcal{E}$ and are such that $\inner{w_1}{e_i} \geq 0$ for all $1\leq i \leq n$. Reordering, we may assume $\max_i \inner{w_1}{e_i} = \inner{w_1}{e_1} \geq \frac{1}{\sqrt{n}}$ and let $w_i = e_i$ for $2 \leq i \leq n$, then $\{ w_i \}_{i=1}^{n}$ forms a basis of $T_{\Xbar_0}\partial\Omega$. We now apply Lemma~\ref{lem: csubdiff in small section est} to this collection of vectors to estimate the $\H^n$-volume of $\csubdiff[\Omega]{u}{S}$ from below.
\begin{lem}\label{lem: csubdiff cone est}
   Suppose $u$ satisfies \eqref{eqn: csub in same side}, and \eqref{eqn: section in same side as slope}, and $X_0\in S$ is such that $m_0(X_0)>u(X_0)$. Then for the above choice of $\{ w_i \}_{i=1}^{n}$, there exists a constant $C_K>0$ that depends on $\Omega$ such that
    \begin{equation*}
        C_K \H^n(\csubdiff[\Omega]{u}{S}) \geq (m_0(X_0)-u(X_0))^{n} \prod_{i=1}^n \frac{1}{\distop([X_0]_{\Xbar_0}, \Pi^{w_i}_{[S]_{\Xbar_0}})}
    \end{equation*}
\end{lem}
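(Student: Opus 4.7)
The plan is to apply Lemma~\ref{lem: csubdiff in small section est} to each of the $n$ directions $w_1,\dots,w_n$, producing points $\Xbar^*_{w_i}\in \csubdiff[\Omega]{K_{S, X_0}}{X_0}\cap \partial\Omega^+(X_0)$, and then to build a simplex inside $\csubdiff[\Omega]{u}{S}$ whose vertices are (projections of) $\Xbar_0$ and the $\Xbar^*_{w_i}$. By Remark~\ref{rmk: csubdiff of c cone} the set $[\csubdiff[\Omega]{K_{S,X_0}}{X_0}\cap\partial\Omega^+(X_0)]_{X_0}$ is convex, so it contains the convex hull of $\{[\Xbar_0]_{X_0}\}\cup\{[\Xbar^*_{w_i}]_{X_0}\}_{i=1}^n$, provided the vectors $[\Xbar^*_{w_i}]_{X_0}-[\Xbar_0]_{X_0}$ are linearly independent (which will be checked via the direction estimate \eqref{eqn: csubdiff direction est}).

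To transfer this simplex into $\csubdiff[\Omega]{u}{S}$ using Lemma~\ref{lem: csubdiff cone in csubdiff u }, I need each $\Xbar^*_{w_i}$ to lie in $\bigcap_{X\in S}\partial\Omega^+(X)$. This is where \eqref{eqn: good slope in ball} is used: it gives $\Xbar^*_{w_i}\in B^{n+1}_{\conerad(1/2)}(X)$ for every $X\in S$, and Lemma~\ref{lem: interior cone} (with $\Theta=\tfrac12$) then places $\Xbar^*_{w_i}$ in $\partial\Omega^+(X)$ for every $X\in S$. Using the convexity of $[\csubdiff[\Omega]{K_{S,X_0}}{X_0}\cap\partial\Omega^+(X_0)]_{X_0}$ together with $\cExp{X_0}{\cdot}$ being a well-defined bi-Lipschitz parametrization of $\partial\Omega^+(X_0)$ near the origin (via Lemma~\ref{lem: unif size nbhd lip beta} on $\beta_{X_0}$), I can then estimate
\begin{align*}
\H^n(\csubdiff[\Omega]{u}{S})\;\geq\; C_\Omega\,\H^n\!\bigl(\ch\{[\Xbar_0]_{X_0},[\Xbar^*_{w_1}]_{X_0},\dots,[\Xbar^*_{w_n}]_{X_0}\}\bigr),
\end{align*}
with $C_\Omega$ depending only on $\Omega$.

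The simplex volume equals $\tfrac{1}{n!}\lvert\det M\rvert$ where $M$ has columns $v_i:=[\Xbar^*_{w_i}]_{X_0}-[\Xbar_0]_{X_0}$. To bound this determinant from below, I use \eqref{eqn: section in same side as slope} and uniform continuity of $\mathcal{N}_\Omega$ to see that $\normal{\Omega}{X_0}$ and $\normal{\Omega}{\Xbar_0}$ are close, so $\{\proj^\Omega_{X_0}(w_i)\}_{i=1}^n$ is within a controlled perturbation of an orthonormal basis of $T_{X_0}\partial\Omega$; its Gram determinant is therefore bounded below by a constant depending on $\Omega$. Combining this with the direction estimate \eqref{eqn: csubdiff direction est} (which ensures $\inner{v_i}{\proj^\Omega_{X_0}(w_i)}$ is comparable to $\lvert v_i\rvert$) via a Hadamard-type inequality applied to the matrix with columns $v_i/\lvert v_i\rvert$ and rows indexed by $\{\proj^\Omega_{X_0}(w_j)\}$, I obtain $\lvert\det M\rvert\geq c_\Omega\prod_i\lvert v_i\rvert$. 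Finally, the length estimate \eqref{eqn: csubdiff length est} gives
\begin{align*}
\prod_{i=1}^n\lvert v_i\rvert\;\geq\; c_\Omega'\,(m_0(X_0)-u(X_0))^n\prod_{i=1}^n\frac{1}{d([X_0]_{\Xbar_0},\Pi^{w_i}_{[S]_{\Xbar_0}})},
\end{align*}
which, after absorbing all constants into a single $C_K$ depending on $\Omega$, completes the proof.

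The main obstacle I expect is the determinant lower bound: the vectors $v_i$ each have good inner product with $\proj^\Omega_{X_0}(w_i)$ but nothing is directly said about their inner products with $\proj^\Omega_{X_0}(w_j)$ for $j\neq i$, so one must exploit that $\{\proj^\Omega_{X_0}(w_i)\}$ is nearly orthonormal to convert the per-direction control in \eqref{eqn: csubdiff direction est} into a genuine full-rank lower bound for $M$. Keeping the eventual constant dependent only on $\Omega$ (and in particular independent of $S$, $X_0$, and the choice of $\{w_i\}$) requires that every intermediate estimate is traced back to $\conerad$, $\diam(\Omega)$, and $\omega_\Omega$, which are all uniform quantities on the $C^1$ convex body $\Omega$.
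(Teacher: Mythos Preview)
Your approach is essentially the paper's approach: apply Lemma~\ref{lem: csubdiff in small section est} to each $w_i$, obtain points $\Xbar^*_{w_i}\in\csubdiff[\Omega]{K_{S,X_0}}{X_0}$, form the simplex $\ch\{[\Xbar_0]_{X_0},[\Xbar^*_{w_1}]_{X_0},\dots,[\Xbar^*_{w_n}]_{X_0}\}$ in $T_{X_0}\partial\Omega$, push it to $\csubdiff[\Omega]{u}{S}$ via Lemma~\ref{lem: csubdiff cone in csubdiff u }, and bound its volume from below using \eqref{eqn: csubdiff length est} and \eqref{eqn: csubdiff direction est}. The determinant discussion and the final product bound are handled just as in the paper (which is equally terse on that linear-algebra step).

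There is, however, one genuine gap. To invoke Lemma~\ref{lem: csubdiff cone in csubdiff u } on the \emph{entire} image $\cExp{X_0}{\ch\{\cdot\}}$ you must know that every point of that image, not only the $n+1$ vertices, lies in $\bigcap_{X\in S}\partial\Omega^+(X)$. You check this only at the vertices via \eqref{eqn: good slope in ball}, but $\bigcap_{X\in S}\partial\Omega^+(X)$ is not a priori ``convex'' in the $[\cdot]_{X_0}$ coordinates, so the simplex interior is not automatically covered. The paper fills this as follows: for $\vec\lambda$ in the standard simplex write $\Xbar(\vec\lambda):=\cExp{X_0}{\sum_i\lambda_i[\Xbar_i]_{X_0}}$; then
\[
\lvert \Xbar(\vec\lambda)-X_0\rvert^2
=\Bigl\lvert \sum_i\lambda_i[\Xbar_i]_{X_0}\Bigr\rvert^2
+\bigl(\beta_{X_0}(x_0+\textstyle\sum_i\lambda_i[\Xbar_i]_{X_0})-\beta_{X_0}(x_0)\bigr)^2
\]
is convex in $\vec\lambda$ (quadratic plus the square of a nonpositive concave function), hence is maximized at the vertices, giving $\lvert\Xbar(\vec\lambda)-X_0\rvert\leq \conerad(\tfrac12)$. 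Combining this with $\lvert Y-X_0\rvert<\conerad(\tfrac{35}{36})$ for $Y\in S$ (from \eqref{eqn: section in same side as slope}) and the triangle inequality for $\mathcal{N}_\Omega$ yields $\inner{\normal{\Omega}{\Xbar(\vec\lambda)}}{\normal{\Omega}{Y}}>0$ for all $Y\in S$, i.e.\ $\Xbar(\vec\lambda)\in\bigcap_{X\in S}\partial\Omega^+(X)$. With this extra step your argument goes through.
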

\begin{proof}
    Let $\Xbar_i = \Xbar_{w_i}^*$ be the points that we obtain by applying Lemma~\ref{lem: csubdiff in small section est} to $w_i$ for $0\leq i \leq n$ and denote $\xbar_i = \proj_{X_0}(\Xbar_i)$, $1 \leq i \leq n$. Since each $\Xbar_i\in \partial\Omega^+(X_0)$, by Corollary~\ref{cor: csubdiff c-convex} we obtain
    \begin{equation*}
        \ch\{ \xbar_i \mid i = 0, \cdots, n \} \subset [\csubdiff[\Omega]{K_{S, X_0}}{X_0}]_{X_0}.
    \end{equation*}
Now for $\vec{\lambda}:=(\lambda_0, \ldots, \lambda_n)$ with $\sum_{i=0}^n\lambda_i=1$ and $\lambda_i\geq 0$, write
\begin{align*}
    \Xbar(\vec\lambda):=\cExp{X_0}{\sum_{i=0}^n \lambda_i \xbar_i},
\end{align*}
which we see is well-defined by convexity of $\Omega_{X_0}$. Writing $x_0:=[X_0]_{X_0}$,
\begin{align*}
    \lvert\Xbar(\vec\lambda)-X_0\rvert^2=\lvert \sum_{i=0}^n \lambda_i \xbar_i\rvert^2+(\beta_{X_0}(x_0+\sum_{i=0}^n \lambda_i \xbar_i)-\beta_{X_0}(x_0))^2.
\end{align*}
Again the above is  a convex function in $\vec\lambda$ by concavity and negativity of $\beta_{X_0}-\beta_{X_0}(x_0)$, hence it attains its maximum at one of the extremal points of the unit simplex, yielding by \eqref{eqn: good slope in ball},
\begin{align*}
    \lvert\Xbar(\vec\lambda)-X_0\rvert^2\leq \max_{i=0, \ldots, n}\lvert\Xbar_i-X_0\rvert^2\leq \conerad(\frac{1}{2})^2,
\end{align*}
thus $\Xbar(\vec\lambda)\in \partial\Omega^+_{\frac{1}{2}}(X_0)$. For any $Y\in S$, by \eqref{eqn: section in same side as slope} we have $\lvert Y-X_0\rvert\leq \lvert Y-\Xbar_0\rvert+\lvert X_0-\Xbar_0\rvert<\conerad(\frac{35}{36})$ which implies $Y\in \partial\Omega^+_{\frac{35}{36}}(X_0)$, hence
\begin{align*}
    \lvert \normal{\Omega}{\Xbar(\vec\lambda)}-\normal{\Omega}{Y}\rvert^2&\leq (\lvert \normal{\Omega}{\Xbar(\vec\lambda)}-\normal{\Omega}{X_0}\rvert+\lvert \normal{\Omega}{X_0}-\normal{\Omega}{Y}\rvert)^2\\
    &\leq (1+\sqrt{2-\frac{35}{18}})^2%=(1+\frac{1}{3\sqrt{2}})^2
    =\frac{19+6\sqrt{2}}{18}
\end{align*}
thus
\begin{align*}
    \inner{\normal{\Omega}{\Xbar(\vec\lambda)}}{\normal{\Omega}{Y}}
    &\geq 1-\frac{19+6\sqrt{2}}{36}>0,
\end{align*}
meaning $\cExp{X_0}{\ch\{ x_i \mid i = 0, \cdots, n \}} \subset \partial \Omega^+(Y)$ for any $Y \in S$. Therefore, we can apply Lemma~\ref{lem: csubdiff cone in csubdiff u } and obtain $\cExp{X_0}{\ch\{ \xbar_i \mid i = 0, \cdots, n \}} \subset \csubdiff[\Omega]{u}{S}$, in particular, we have
    \begin{equation*}
        \H^n(\cExp{X_0}{\ch\{ \xbar_i \mid i = 0, \cdots, n \}}) \leq \H^n(\csubdiff[\Omega]{u}{S}).
    \end{equation*}
    With our choice of $\{ w_i \}_1^n$, \eqref{eqn: csubdiff direction est} from Lemma~\ref{lem: csubdiff in small section est} implies that $\{ x_i - x_0 \}_1^n$ spans a parallelepiped which is non-degenerate and has $\H^n$-volume comparable to $\prod_{i=1}^n \lvert \xbar_i - \xbar_0 \rvert$, with constant depending on $\Omega$. Then we use \eqref{eqn: csubdiff length est} to obtain the desired inequality.
\end{proof}
A standard argument, which we present here for completeness, gives a lower bound on the $\H^n$-volume of $S$.
\begin{lem}\label{lem: |S| lower est}
   Suppose $u$ satisfies \eqref{eqn: csub in same side}, and \eqref{eqn: section in same side as slope}, and let $\{ w_i \}_1^n$ be as in Lemma~\ref{lem: csubdiff cone est}. Then there exists a constant $C_0$ that depends only on $n$ such that
    \begin{equation*}
        C_0 \H^n(S) \geq l([S]_{\Xbar_0}, w_1) \prod_{i=2}^n d \left( \Pi^{w_i}_{[S]_{\Xbar_0}}, \Pi^{-w_i}_{[S]_{\Xbar_0}}\right).
    \end{equation*}
\end{lem}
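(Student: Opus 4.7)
The plan is to reduce the estimate on $\H^n(S)$ to a planar estimate on the convex projected set $[S]_{\Xbar_0}\subset T_{\Xbar_0}\partial\Omega$, and then to deduce the latter directly from the inclusions $\mathcal E\subset[S]_{\Xbar_0}\subset n\mathcal E$ defining the John ellipsoid.

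First, by \eqref{eqn: section in same side as slope}, $S\subset\partial\Omega^+(\Xbar_0)$, so after rotating coordinates so that $\normal{\Omega}{\Xbar_0}=e_{n+1}$ the set $S$ is precisely the graph of the concave function $\beta_{\Xbar_0}$ over $[S]_{\Xbar_0}$. The area formula then gives
\[
\H^n(S) \;=\; \int_{[S]_{\Xbar_0}} \sqrt{1+\lvert\nabla^n\beta_{\Xbar_0}(x)\rvert^2}\,dx \;\geq\; \H^n([S]_{\Xbar_0}).
\]
Moreover, the same $c$-segment argument as in Corollary~\ref{cor: csubdiff c-convex}(1) (applied to points of $S\subset\partial\Omega^+(\Xbar_0)$, whose $c$-segments with respect to $\Xbar_0$ project to straight Euclidean segments) shows $[S]_{\Xbar_0}$ is convex. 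It therefore suffices to prove $L_1\prod_{i\geq 2}L_i\leq C_0\H^n([S]_{\Xbar_0})$, where I write $L_1:=l([S]_{\Xbar_0},w_1)$ and $L_i:=d(\Pi^{w_i}_{[S]_{\Xbar_0}},\Pi^{-w_i}_{[S]_{\Xbar_0}})$ for $i\geq 2$.

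Next, the John ellipsoid has semi-axes $a_1,\ldots,a_n$ along $e_1,\ldots,e_n$ by the choice of basis preceding the statement. Since $w_i=e_i$ for $i\geq 2$, the width of $n\mathcal E$ in direction $w_i$ equals $2na_i$, so $L_i\leq 2na_i$. For $w_1$, by central symmetry the longest chord of an ellipsoid in any fixed direction passes through the center; a direct computation on $n\mathcal E$ shows that this chord has length $2\left(\sum_j(w_1\cdot e_j)^2/(na_j)^2\right)^{-1/2}$, which is at most $2n\sqrt n\,a_1$ because $\lvert w_1\cdot e_1\rvert\geq 1/\sqrt n$. Hence $L_1\leq 2n\sqrt n\,a_1$.

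Finally, multiplying these bounds and using $\H^n(\mathcal E)=\omega_n\prod_{i=1}^n a_i$ (with $\omega_n$ the volume of the Euclidean unit ball in $\R^n$) together with $\mathcal E\subset[S]_{\Xbar_0}$ yields
\[
L_1\prod_{i\geq 2}L_i \;\leq\; (2n)^n\sqrt n\,\prod_{i=1}^n a_i \;\leq\; \frac{(2n)^n\sqrt n}{\omega_n}\,\H^n([S]_{\Xbar_0}) \;\leq\; \frac{(2n)^n\sqrt n}{\omega_n}\,\H^n(S),
\]
which gives the lemma with $C_0=(2n)^n\sqrt n/\omega_n$ depending only on $n$. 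There is no substantive obstacle here; the only delicate step is estimating the longest chord of $n\mathcal E$ in the possibly off-axis direction $w_1$, which is handled by the alignment bound $\lvert w_1\cdot e_1\rvert\geq 1/\sqrt n$ built into the choice of basis.
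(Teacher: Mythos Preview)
Your proof is correct and follows essentially the same approach as the paper: reduce to the projected convex set via the area formula, use the John ellipsoid inclusion $\mathcal E\subset[S]_{\Xbar_0}\subset n\mathcal E$ together with $w_i=e_i$ for $i\geq 2$, and handle the off-axis direction $w_1$ via the alignment bound $\langle w_1,e_1\rangle\geq 1/\sqrt n$. The only cosmetic difference is that the paper bounds $l([S]_{\Xbar_0},w_1)$ by $\sqrt n$ times the width of $[S]_{\Xbar_0}$ in the $e_1$ direction (projecting the maximal chord onto $e_1$), whereas you compute the maximal chord of $n\mathcal E$ in direction $w_1$ directly from the ellipsoid formula; both routes yield the same factor of $\sqrt n$ and the same dimensional constant.
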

\begin{proof}
    By \eqref{eqn: section in same side as slope} we can calculate
    \begin{equation}\label{eqn: |S| bounded below}
    \H^n(S)\geq \int_{[S]_{\Xbar_0}}\sqrt{1+\lvert \nabla^n\beta_{\Xbar_0}\rvert^2}\geq \H^n([S]_{\Xbar_0})\geq \H^n(\mathcal{E})\geq C\prod_{i=1}^n d \left( \Pi^{e_i}_{[S]_{\Xbar_0}}, \Pi^{-e_i}_{[S]_{\Xbar_0}}\right)
    \end{equation}
    for some constant $C$ that only depends on $n$. Here we have used that $e_i$ are the axial directions of the John ellipsoid $\mathcal{E}$ of $[S]_{\Xbar_0}$. Let $[y_1,y_2]$ be the points in $[S]_{\Xbar_0}$ that achieve $l([S]_{\Xbar_0}, w_1)$. Then we compute
    \begin{equation}\label{eqn: dist in e1 bounded below}
        d \left( \Pi^{e_1}_{[S]_{\Xbar_0}}, \Pi^{-e_1}_{[S]_{\Xbar_0}}\right) \geq \inner{y_1 - y_2}{e_1} =|y_1 - y_2 | \inner{w_1}{e_1} \geq \frac{ l([S]_{\Xbar_0}, w_1)}{\sqrt{n}},
    \end{equation}
    and we combine \eqref{eqn: |S| bounded below} and \eqref{eqn: dist in e1 bounded below} to obtain
    \begin{equation*}
    \H^n(S) \geq \frac{C}{\sqrt{n}} l([S]_{\Xbar_0}, w_1) \prod_{i=2}^n d \left( \Pi^{w_i}_{[S]_{\Xbar_0}}, \Pi^{-w_i}_{[S]_{\Xbar_0}}\right).
    \end{equation*}
\end{proof}
Finally combining the lemmas above gives the upper Aleksandrov estimate.
\begin{proof}[Proof of Theorem~\ref{thm: upper aleksandrov}]
    Multiplying the inequalities from Lemma~\ref{lem: csubdiff cone est} and Lemma~\ref{lem: |S| lower est}, we obtain that for some constant $C>0$ that depends only on $\Omega$,
    \begin{align*}
        C \H^n(\csubdiff[\Omega]{u}{X_0})\H^n(S) & \geq \frac{l([S]_{\Xbar_0},w_1)}{d \left( [X_0]_{\Xbar_0}, \Pi^{w_1}_{[S]_{\Xbar_0}} \right) } \prod_{i=2}^n \frac{d\left(\Pi^{w_i}_{[S]_{\Xbar_0}}, \Pi^{-w_i}_{[S]_{\Xbar_0}} \right)}{d \left( [X_0]_{\Xbar_0} , \Pi^{w_i}_{[S]_{\Xbar_0}}\right)} (m_0(X_0)-u(X_0))^n\\
        & \geq \frac{l([S]_{\Xbar_0},w_1)}{d \left( [X_0]_{\Xbar_0}, \Pi^{w_1}_{[S]_{\Xbar_0}} \right) } (m_0(X_0)-u(X_0))^n.
    \end{align*}
\end{proof}

\section{Approximation}\label{section: approximation}
To prove Theorem~\ref{thm: nonstrictly convex body}, we must show that some dual potential for our transport problem satisfies the key conditions \eqref{eqn: lower est section in same side}, \eqref{eqn: lower est section in center ball}, and \eqref{eqn: lower est csub in same side} from Theorem~\ref{thm: lower aleksandrov}, and \eqref{eqn: csub in same side} and \eqref{eqn: section in same side as slope} from Theorem~\ref{thm: upper aleksandrov}, at least for sections of sufficiently small height. To do so, we will need to take a uniformly convex sequence of approximating convex bodies and apply the key estimate Proposition~\ref{prop: stay away} combined with Proposition~\ref{prop: nonsplitting} below. Again the key feature of Proposition~\ref{prop: stay away} is that the constants in the estimate depend only on $C^1$ quantities of the body $\Omega$, which will be stable under approximation in Hausdorff distance due to convexity of the domains.
\begin{prop}[Stay-away estimate with splitting]\label{prop: stay away}
 Suppose we have probability measures  $\mu=\rho d\mathcal{H}^n$ and $\mubar$,  with $\rho$ bounded away from zero, and let $(u, u^c)$ be a maximizing pair in the dual problem \eqref{eqn: kantorovich dual} between $\mu$ and $\mubar$. Also suppose $\Theta\in [\frac{1}{2}, 1)$ and let $\rho_0>0$ satisfy $\rho_0\leq \essinf_{\partial \Omega}\rho$. Then for any $\Xbar_0\in \csubdiff[\Omega]{u}{X_0}\cap \partial \Omega^+(X_0)$, it holds that
\begin{align*}
\abs{\proj^\Omega_{X_0}(X_0-\Xbar_0)}\leq C_{\rho_0, \Omega}\W_2(\mu, \mubar)^{\frac{2}{n+2}}
\end{align*}
where
\begin{align*}
C_{\rho_0, \Omega} 
:&= \left(\frac{2^{n-2}\rho_0 \mathcal{H}^{n-2}(\mathbb{S}^{n-2})}{n(n+1)}\right.\\
&\left.\cdot \min \left( \frac{\conerad(\Theta)^2}{48\diam(\Omega)^2+2\conerad(\Theta)^2}, \frac{\conerad(\Theta)}{8\diam(\Omega)}, \frac{1}{16} \right)^n  \int_0^{\frac{\pi}{2}} \cos^{n+2} \phi \sin^{n-2} \phi d \phi\right)^{-\frac{1}{n+1}}.
\end{align*}
%
%for some $C_{\rho_0, \Omega}>0$ depending only on $n$, $\diam(\Omega)$, $\rho_0$, and $\conerad$.
\end{prop}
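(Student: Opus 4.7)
The plan is to lower-bound $\W_2(\mu,\mubar)^2$ by integrating the transport cost $\frac{1}{2}\abs{Y-\Xbar}^2$ against the optimal Kantorovich coupling $\gamma^*$ over a cone-shaped region in $\partial\Omega$ near $X_0$ opening in the direction of $\Xbar_0$, and to use $c$-cyclic monotonicity with the distinguished pair $(X_0,\Xbar_0)$ to show that on this region the pointwise cost is comparable to $d^2$, where $d := \abs{\proj^\Omega_{X_0}(X_0-\Xbar_0)}$. The resulting bound $\W_2^2 \gtrsim \rho_0 d^{n+2}$ (up to the geometric and directional factors inside $C_{\rho_0,\Omega}$) then yields the conclusion after extracting roots.

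First I would rotate and translate so that $X_0=0$, $\normal{\Omega}{X_0}=e_{n+1}$, and $\proj^\Omega_{X_0}(\Xbar_0)=-de_1$. Comparing the supporting $c$-inequalities at $(X_0,\Xbar_0)$ and at any $(Y,\Xbar) \in \spt\gamma^* \subset \partial_c^\Omega u$, then expanding using $c(X,\Xbar) = \abs{X-\Xbar}^2/2$, produces the cyclic monotonicity
\[
\inner{Y-X_0}{\Xbar-\Xbar_0}\geq 0.
\]
I then parametrize a test region through the $c$-exponential map, taking $A := \cExp{X_0}{\{s\omega : s\in(0,r_0),\ \omega \in \S^{n-1}\cap T_{X_0}\partial\Omega,\ \inner{\omega}{-e_1}\geq\cos\phi\}}$ with $r_0$ proportional to $d$. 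The three thresholds appearing inside the $\min$ in $C_{\rho_0,\Omega}$ correspond respectively to ensuring $A \subset B^{n+1}_{\conerad(\Theta)/4}(X_0)\cap \partial\Omega^+(X_0)$ so that Lemmas~\ref{lem: interior cone} and \ref{lem: unif size nbhd lip beta} both apply, controlling the ratio of the Lipschitz error $L_\Theta\diam(\Omega)$ relative to $d$, and a universal geometric prefactor arising when optimizing over $r_0$.

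The main obstacle is the pointwise lower bound on $\abs{Y-\Xbar}$ for $(Y,\Xbar)\in\spt\gamma^*$ with $Y \in A$. Writing $\xbar:=\proj^\Omega_{X_0}(\Xbar)$ and $\xbar_0:=\proj^\Omega_{X_0}(\Xbar_0)=-de_1$, decomposing the monotonicity into tangential and normal components relative to $T_{X_0}\partial\Omega$ yields
\[
s\inner{\omega}{\xbar-\xbar_0}+\beta_{X_0}(s\omega)\bigl(\Xbar_{n+1}-(\Xbar_0)_{n+1}\bigr)\geq 0.
\]
Combined with $\abs{\beta_{X_0}(s\omega)}\leq L_\Theta s$ from Lemma~\ref{lem: unif size nbhd lip beta} and $\abs{\Xbar-\Xbar_0}\leq\diam(\Omega)$, this gives $\inner{\omega}{\xbar-\xbar_0}\geq -L_\Theta\diam(\Omega)$; the cone constraint $\omega_1 \leq -\cos\phi$ then forces $\xbar_1 \leq -d + O(L_\Theta\diam(\Omega)/\cos\phi)$, and projecting $Y-\Xbar$ onto $e_1$ yields $\abs{(Y-\Xbar)\cdot e_1}\geq d - s - L_\Theta\diam(\Omega)/\cos\phi$, which is $\gtrsim d\cos\phi$ under the threshold choice of $r_0$. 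A complementary split handles the case $\abs{\Xbar-\Xbar_0}$ very large, where the separation from $Y$ is supplied directly by $\abs{\Xbar-\Xbar_0}$ against the small $\abs{Y-X_0}$; this case split is presumably the ``splitting'' in the proposition's name.

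Finally, integrating in polar coordinates on $T_{X_0}\partial\Omega$ with area element $s^{n-1}\sin^{n-2}\phi\, ds\, d\phi\, d\sigma_{\S^{n-2}}$, using $\mu\geq\rho_0\, d\H^n$ and that the Jacobian of $\cExp{X_0}{\cdot}$ exceeds $1$ by concavity of $\beta_{X_0}$, we estimate
\[
\W_2^2 \geq \frac{\rho_0}{2}\int_{\text{cone}} \abs{Y-\Xbar}^2\, s^{n-1}\sin^{n-2}\phi\, d\phi\, ds\, d\sigma_{\S^{n-2}}.
\]
A refined directional form of the pointwise bound (each additional $\cos\phi$ factor encoding the loss as $\omega$ rotates away from $-e_1$) supplies the $\cos^{n+2}\phi$ weight, the spherical integration yields $\H^{n-2}(\S^{n-2})$, the $s$-integration contributes a power of $r_0$, and optimizing $r_0=\lambda d$ with $\lambda$ given by the $\min$ delivers the $\frac{1}{n(n+1)}$ prefactor. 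Rearranging the resulting polynomial inequality between $\W_2^2$ and $d$ yields the stated estimate with $C_{\rho_0,\Omega}$ as defined.
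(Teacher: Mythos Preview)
Your overall architecture matches the paper's: pick a region on $\partial\Omega$ near $X_0$ opening toward $\Xbar_0$, use monotonicity of $\partial_c^\Omega u$ to get a pointwise lower bound $\lvert Y-\Xbar\rvert \gtrsim d$ for $(Y,\Xbar)\in\spt\gamma$ with $Y$ in that region, and integrate in polar coordinates. However, your execution of the pointwise bound has a genuine gap.

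When you decompose the monotonicity $\inner{Y}{\Xbar-\Xbar_0}\geq 0$ into tangential and normal parts, you bound the normal contribution by $\lvert\beta_{X_0}(s\omega)\rvert\cdot\lvert\Xbar_{n+1}-(\Xbar_0)_{n+1}\rvert\leq L_\Theta s\cdot\diam(\Omega)$, obtaining $\inner{\omega}{\xbar-\xbar_0}\geq -L_\Theta\diam(\Omega)$. But $L_\Theta\diam(\Omega)=4\diam(\Omega)^2/\conerad(\Theta)$ is a fixed constant of $\Omega$, not proportional to $d$; for small $d$ your resulting lower bound on $\lvert Y-\Xbar\rvert$ is negative and gives nothing. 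Your proposed case split on $\lvert\Xbar-\Xbar_0\rvert$ does not rescue this: if you set the threshold $M\sim d$, then in the ``small'' case the error becomes $L_\Theta M\sim L_\Theta d$, which still swamps $d\cos\phi$ since $L_\Theta>1$; and in the ``large'' case you need $M$ to exceed $\lvert Y-\Xbar_0\rvert$, which is itself of order $\diam(\Omega)$ (not $d$) because only $\Xbar_0\in\partial\Omega^+(X_0)$ is assumed. The constants do not align.

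The paper avoids this entirely by \emph{not} splitting tangential from normal. It shows the test region (a graphical patch over the tangent-plane ball $B^n_{ca}(cae_1)$ with $c$ equal to the $\min$ in the statement) lies inside the ambient ball $B_a\subset\R^{n+1}$ with diameter $[X_0,\Xbar_0]$. Membership in $B_a$ is exactly the inequality $\lvert Y\rvert^2<\inner{Y}{\Xbar_0}$, so $Y$ lies strictly on the opposite side of the halfspace $\{Z:\inner{Z-\Xbar_0}{Y}\geq 0\}$ from $\Xbar$, and the orthogonal distance gives $\lvert Y-\Xbar\rvert\geq \inner{\Xbar_0}{Y/\lvert Y\rvert}-\lvert Y\rvert$. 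The point is that $\inner{\Xbar_0}{Y}=a x_1+\beta_{X_0}(ae_1)\beta_{X_0}(x)$, and both $\beta$-values are $\leq 0$, so the normal cross-term is \emph{nonnegative} and helps rather than hurts. This sign structure is exactly what your tangential/normal separation discards. The verification of $K_{a,c}\subset B_a$ is where the three thresholds in the $\min$ actually arise (via \eqref{eqn: K_a subset B_a}), and no case split on $\Xbar$ is needed. Incidentally, ``splitting'' in the title refers to multi-valuedness of the $c$-subdifferential (the Gangbo--McCann phenomenon), not to a case analysis in the proof.
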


\begin{proof}
Fix some $\Xbar_0\in \csubdiff[\Omega]{u}{X_0}\cap \partial \Omega^+(X_0)$. 
For ease of notation, let us write $\beta$ for $\beta_{X_0}$, then by a rotation and translation of coordinates we may assume that $\mathcal{N}_{\Omega}(X_0)=e_{n+1}$ to identify $\Pi^\Omega_{X_0}$ with $\mathbb{R}^n$, that $X_0$ is the origin so $\beta\leq\beta(0)=0$, and that $\Xbar_0 = (ae_1, \beta(ae_1))$ for some $a \geq 0$.

Now, defining
\begin{align*}
B_a :&= B^{n+1}_{\frac{\sqrt{a^2+\beta(ae_1)^2}}{2}}(\left(\frac{ae_1}{2}, \frac{\beta(ae_1)}{2}\right)),\\
K_{a, \kappa} :&= \{ (x, \beta(x))\mid x \in B^n_{\kappa a}(\kappa ae_1) \} \subset \partial \Omega
\end{align*}
we claim that
\begin{equation*}
K_{a, \kappa} \subset B_a
\end{equation*}
for some constant $0<\kappa<\frac{\conerad(\Theta)}{4\diam(\Omega)}$. By Remark~\ref{rmk: uni-sized nbhd in proj}, for such $\kappa$ we will have $B^n_{\kappa a}(\kappa ae_1)\subset \Omega_{X_0}$, hence $\beta$ is defined on all of $B^n_{\kappa a}(\kappa ae_1)$. Let $(x,\beta(x)) \in K_{a, \kappa}$, then we must verify 
\begin{equation*}
|x-\frac{ae_1}{2}|^2 + |\beta(x)-\frac{\beta(ae_1)}{2}|^2 < \frac{a^2}{4} + \frac{\beta(ae_1)^2}{4},
\end{equation*}
which is equivalent to 
\begin{equation*}
|x|^2 - a\inner{x}{e_1} + \beta(x)^2 - \beta(x)\beta(ae_1) <0.
\end{equation*}
Since $x \in B^n_{\kappa a}(\kappa ae_1)$ we have $|x-\kappa ae_1|^2< \kappa^2a^2$, and therefore the above  is true if 
\begin{align}\label{eqn: K_a subset B_a}
0&>|x|^2-a\inner{x}{e_1} + \beta(x)^2 -\beta(x)\beta(ae_1) +\kappa^2a^2 - |x-\kappa ae_1|^2\notag\\
&=a(2\kappa-1)\inner{x}{e_1}+\beta(x)(\beta(x)-\beta(ae_1)).
\end{align}
Let $x^\perp$ be such that $x = \inner{x}{e_1} e_1 + x^\perp$. As long as we take $\kappa \leq \min\left(\frac{\conerad(\Theta)}{8\diam(\Omega)}, \frac{1}{16}\right)$ it will hold that $B^n_{\kappa a}(\kappa ae_1) \subset B^n_{\frac{\conerad(\Theta)}{4}}(0)$ and $0\leq \inner{x}{e_1}\leq 2\kappa a<a$. By \eqref{eqn: Lip eqn beta} with $x_1=0$, $x_2=ae_1$,  we have
\begin{equation*}
\beta(ae_1) \geq -L_\Theta a,
\end{equation*}
then using \eqref{eqn: Lip eqn beta} this time with $x_1=x$, $x_2=\inner{x}{e_1}e_1$, along with concavity of $\beta$,
\begin{align*}
0&\geq\beta(x)  \geq \beta(\inner{x}{e_1}e_1) - L_\Theta|x^\perp| 
 \geq \frac{\inner{x}{e_1}}{a} \beta(ae_1) -L_\Theta|x^\perp| 
 \geq - L_\Theta\inner{x}{e_1}- L_\Theta|x^\perp|.
\end{align*}
Using the third inequality above, we also find
\begin{align*}
    \beta(x)-\beta(ae_1)&\geq \left(\frac{\inner{x}{e_1}}{a}-1\right)\beta(ae_1)-L_\Theta\abs{x^\perp}\geq -L_\Theta\abs{x^\perp},
\end{align*}
thus combining the above inequalities we see to obtain \eqref{eqn: K_a subset B_a} it is sufficient to show
\begin{equation*}
a(2\kappa-1)\inner{x}{e_1} + L_\Theta|x^\perp| \left( L_\Theta\inner{x}{e_1}+ L_\Theta|x^\perp| \right)<0.
\end{equation*}
Since $x \in B^n_{\kappa a}(\kappa ae_1)$, we have $|x^\perp|^2 < 2\kappa a\inner{x}{e_1}-\inner{x}{e_1}^2$ and $|x^\perp|<\kappa a$, therefore 
\begin{align*}
& a(2\kappa-1)\inner{x}{e_1} + L_\Theta|x^\perp| \left(L_\Theta\inner{x}{e_1} + L_\Theta|x^\perp| \right) 
 < a\inner{x}{e_1} (\kappa(3L_\Theta^2 +2)-1) - L_\Theta^2\inner{x}{e_1}^2.
\end{align*}
Since $\inner{x}{e_1}\geq 0$, the expression on the right is nonpositive for $\inner{x}{e_1} \geq \frac{a(\kappa(3L_\Theta^2+2)-1)}{L_\Theta^2}$, thus we obtain $K_{a, \kappa} \subset B_a$ by choosing $$\kappa := \min \left( \frac{1}{3L_\Theta^2+2}, \frac{\conerad(\Theta)}{8\diam(\Omega)}, \frac{1}{16} \right).$$

Since $\partial_c^\Omega u$ is closed by continuity of $c$, from \cite[Theorem 5.10 (iii)]{Villani09} we see that there exists at least one Kantorovich solution between $\mu$ and $\mubar$ satisfying $\spt\gamma\subset \partial_c^\Omega u$.  Let $(X,\Xbar) \in \spt\gamma$ with $X = (x,\beta(x)) \in K_{a, \kappa}$. By Remark~\ref{rmk: subdifferentials} we see that $\partial^\Omega_c u$ is a subset of the subdifferential of a convex function, hence by Rockafellar's theorem (see \cite[Theorem 24.8]{Rockafellar70}) it is monotone. In particular the point $\Xbar$ must lie in the half space $\{ \Ybar \mid \inner{\Ybar-(ae_1, \beta(ae_1))}{X} \geq 0 \}$, which does not contain $X$ in the interior (as $X \in B_a$), hence the distance from $X$ to $\Xbar$ is bounded from below by the orthogonal distance from $X$ to the boundary of this halfspace. Hence for such $(X, \Xbar)$ we have the estimate
\begin{align*}
|X-\Xbar| & \geq \inner{(ae_1, \beta(ae_1))}{ \frac{X}{|X|}}-|X| \\
& = \frac{1}{|X|} ( a\inner{x}{e_1} + \beta(ae_1) \beta(x)) - |X| \\
& \geq \frac{a \inner{x}{e_1} }{|X|} - |X| 
 \geq \frac{a}{2} \inner{\frac{x}{|x|}}{e_1} - 2|x|,
\end{align*}
 the last inequality is from the (nonsharp) bound $|X| = \sqrt{|x|^2 + \beta(x)^2} \leq \sqrt{|x|^2+ 3|x|^2} = 2|x|$, which follows from  Lemma~\ref{lem: interior cone} \eqref{eqn: cone inclusion holds} and Remark~\ref{rmk: uni-sized nbhd in proj}, noting that since $B^n_{\kappa a}(\kappa ae_1)\subset B^n_{\frac{\conerad(\Theta)}{4}}(0)\subset B^n_{\frac{\conerad(\frac{1}{2})}{2}}(0)$ we have $0\geq \beta(x)\geq -\sqrt{3}$. 
Also note since $x\in B^n_{\kappa a}(\kappa ae_1)$ and $\kappa<1/16$, we have
\begin{equation*}
\frac{a}{2}\inner{\frac{x}{|x|}}{e_1} -2|x|\geq 4\kappa a \inner{ \frac{x}{|x|}}{e_1}  - 2 |x|\geq 0.
\end{equation*}

Now we can calculate (using $\kappa<\frac{1}{16}$ here)
\begin{align*}
\W_2^2(\mu, \bar{\mu}) & \geq \frac{1}{2} \int_{\{(X, \Xbar) \in \mathrm{spt} \gamma \mid X \in K_{a, \kappa} \}} |X-\Xbar|^2 d \gamma(X, \Xbar) \\ 
& \geq \frac{\rho_0}{2} \int_{B^n_{\kappa a}(\kappa ae_1)} \left(\frac{a}{2} \langle \frac{x}{|x|}, e_1 \rangle - 2|x|\right)^2 \sqrt{1+|\nabla^n \beta (x)|^2} dx \\
& \geq \frac{\rho_0 \mathcal{H}^{n-2}( \mathbb{S}^{n-2} )}{2} \int_0^{\frac{\pi}{2}} \int_0^{{2\kappa a \cos \phi}} r^{n-1} (\frac{a}{2} \cos \phi - 2r)^2dr \sin^{n-2} \phi d \phi \\
& =  a^{n+2}\left( \frac{1}{4n} - \frac{2^2\kappa}{n+1} + \frac{2^4\kappa^2}{n+2}\right) 2^n   \rho_0 \mathcal{H}^{n-2}(\mathbb{S}^{n-2})\kappa^n \int_0^{\frac{\pi}{2}} \cos^{n+2} \phi \sin^{n-2} \phi d \phi\\
& \geq  a^{n+2}\left( \frac{1}{4n} - \frac{1}{4(n+1)}\right) 2^n  \rho_0 \mathcal{H}^{n-2}(\mathbb{S}^{n-2}) \kappa^n \int_0^{\frac{\pi}{2}} \cos^{n+2} \phi \sin^{n-2} \phi d \phi\\
& =  a^{n+2}\left( \frac{2^{n-2}  \rho_0 \mathcal{H}^{n-2}(\mathbb{S}^{n-2})}{n(n+1)}\right)  \kappa^n \int_0^{\frac{\pi}{2}} \cos^{n+2} \phi \sin^{n-2} \phi d \phi.
\end{align*}
Since $a=\abs{\proj^\Omega_{X_0}(X_0-\Xbar_0)}$ this finishes the proof.
\end{proof}

Let us write $d_{\partial \Omega}$ for the geodesic distance on $\partial \Omega$, which we recall is defined by $d_{\partial \Omega}(X_1, X_2):=\inf \int_0^1 \abs{\dot\sigma(t)}dt$, where the infimum is over $C^1$ curves $\sigma$ with $\sigma(0)=X_1$, $\sigma(1)=X_2$. Since $\partial \Omega$ is $C^1$ regular, there is a constant $C_{\partial \Omega}\geq 1$ such that 
\begin{align}\label{eqn: geodesic distance bound}
    \lvert X_1-X_2\rvert\leq d_{\partial \Omega}(X_1, X_2)\leq C_{\partial \Omega} \abs{X_1-X_2}
\end{align}
for all $X_1$, $X_2\in \partial \Omega$.

We now show that if the dual potential $u$ has sufficiently small Lipschitz constant, then the $c$-subdifferential image at a point $X$ must remain  close to $X$; in particular this will be applied to obtain \eqref{eqn: csub in same side}.%cannot contain points in $\partial\Omega^-(X)$.
\begin{prop}\label{prop: nonsplitting}
 Let $\Theta\in [0, 1)$ and $\delta>0$. Assume $u$ is a $c$-convex function with $d_{\partial \Omega}$-Lipschitz constant $L_u>0$  such that 
\begin{align}\label{eqn: grad bound}
    L_u<\frac{\delta\conerad(\Theta)}{2C_{\partial \Omega}}.%\min\left(\frac{\conerad}{2C_{\partial \Omega}}, \frac{\inrad\sqrt{3}}{2}\right).
\end{align} 
Then for any $X\in \partial \Omega$, we must have  $\csubdiff[\Omega]{u}{X}\subset B^{n+1}_{\delta\conerad(\Theta)}(X)$.%\partial\Omega^+_{\Theta}(X)$. %$\inner{\normal{\Omega}{X}}{\normal{\Omega}{\Xbar}}\geq\frac{1}{2}$.
\end{prop}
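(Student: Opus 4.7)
\medskip

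The plan is to directly exploit the defining inequality of the $c$-subdifferential by substituting into it a particularly convenient test point, namely $\Xbar$ itself, which is legal since $\csubdiff[\Omega]{u}{X} \subset \partial \Omega$ by Definition~\ref{def: local subdiff}. Doing so should collapse the argument to a one-line comparison between a lower bound coming from the cost and an upper bound coming from the Lipschitz hypothesis.

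More concretely, fix $X\in \partial \Omega$ and any $\Xbar\in \csubdiff[\Omega]{u}{X}$, and write $r:=\abs{X-\Xbar}$. Setting $Y=\Xbar$ in the defining inequality
\[
u(Y) \geq -c(Y,\Xbar) + c(X,\Xbar) + u(X),\qquad Y\in \partial\Omega,
\]
and using $c(\Xbar,\Xbar)=0$ and $c(X,\Xbar)=r^2/2$ yields the lower bound
\[
u(\Xbar)-u(X)\geq \frac{r^2}{2}.
\]
On the other hand, the $d_{\partial \Omega}$-Lipschitz hypothesis, together with the comparison between Euclidean and geodesic distance in \eqref{eqn: geodesic distance bound}, gives the upper bound
\[
u(\Xbar)-u(X)\leq L_u\, d_{\partial\Omega}(X,\Xbar)\leq L_u C_{\partial\Omega} r.
\]

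Combining these two inequalities (and handling the trivial case $r=0$ separately) forces $r\leq 2L_u C_{\partial \Omega}$, and the hypothesis \eqref{eqn: grad bound} immediately upgrades this to $r<\delta\conerad(\Theta)$, which is exactly the claim that $\Xbar \in B^{n+1}_{\delta\conerad(\Theta)}(X)$. There is really no obstacle here beyond spotting the right test point; the entire content of the proposition is that plugging $Y=\Xbar$ into the $c$-subdifferential inequality converts a global Lipschitz bound on $u$ into a local ``non-splitting'' bound on the $c$-subdifferential image, with the constant $2C_{\partial \Omega}$ in \eqref{eqn: grad bound} accounting precisely for the passage from the quadratic gain $r^2/2$ in the cost to the linear Lipschitz loss $L_u C_{\partial \Omega} r$.
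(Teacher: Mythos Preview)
Your proposal is correct and essentially identical to the paper's own proof: both plug $Y=\Xbar$ into the $c$-subdifferential inequality to get $u(\Xbar)-u(X)\geq \tfrac{1}{2}\abs{X-\Xbar}^2$, then combine with the Lipschitz bound $u(\Xbar)-u(X)\leq L_u C_{\partial\Omega}\abs{X-\Xbar}$ to conclude. The only cosmetic difference is that the paper frames the final step as a contradiction, while you argue directly.
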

\begin{proof}
Suppose by contradiction for some $\Xbar\in \csubdiff[\Omega]{u}{X}$,
\begin{align}\label{eqn: not in conerad ball}
    \abs{X-\Xbar}\geq \delta\conerad(\Theta).
\end{align}
Then,
\begin{align*}
    u(\Xbar)-u(X)&\geq -c(\Xbar, \Xbar)+c(X, \Xbar)=\frac{\abs{X-\Xbar}^2}{2}.
\end{align*}
At the same time we have
\begin{align*}
    u(\Xbar)-u(X)&\leq L_ud_{\partial \Omega}(X, \Xbar)\leq C_{\partial \Omega}L_u\abs{X-\Xbar},
\end{align*}
thus combining the above two inequalities and using \eqref{eqn: grad bound} gives
\begin{align*}
    \delta\conerad(\Theta)&> 2C_{\partial \Omega}L_u\geq\abs{X-\Xbar},
\end{align*}
a contradiction with \eqref{eqn: not in conerad ball}, thus no such $\Xbar$ can exist.
\end{proof}

For the remainder of the paper, we fix probability measures  $\mu=\rho d\mathcal{H}^n$, and $\mubar=\rhobar d\mathcal{H}^n$ with $\rho$ and $\rhobar$ bounded away from zero and infinity. We will apply Proposition~\ref{prop: stay away} to an approximating sequence to show there is a dual potential on $\Omega$ whose Lipschitz constant can be controlled by $\W_2(\mu, \mubar)$, which in turn allows us to apply Proposition~\ref{prop: nonsplitting}. First recall: 
\begin{defin}\label{def: hausdorff distance}
    If $\Omega$, $\Omega'\subset \R^{n+1}$, then the \emph{Hausdorff distance} between them is defined by
    \begin{align*}
        \hdist{\Omega}{\Omega'}:=\max\left(\sup_{X\in \Omega}\dist{X}{\Omega'},\ \sup_{X'\in \Omega'}\dist{X'} {\Omega}\right).
    \end{align*}
\end{defin}
By \cite[Main Theorem]{Schneider84}, there exists a sequence $\{\Omega_k\}_{k=1}^\infty$ of (smooth) uniformly convex bodies which converge in Hausdorff distance to $\Omega$ as $k\to\infty$. In particular, we may take uniform constants
\begin{align*}
    \inrad[k]=\inrad, \qquad\outrad[k]=\outrad,\qquad\forall k,
\end{align*}
where $\inrad$ and $\outrad$ are defined in~\eqref{eqn: uniform balls}. 
By \cite[Theorem 14]{Wills07}, the sequence $\{\partial\Omega_k\}_{k=1}^\infty$ also converges in Hausdorff distance to $\partial \Omega$.

Our estimates rely on the quantity $\conerad(\Theta)$ from Lemma~\ref{lem: interior cone}, a simple compactness argument shows this value can be taken to be uniform for $\Omega$ and the approximating sequence $\Omega_k$.
\begin{lem}\label{lem: hausdorff stable}
For any $\Theta\in (0, 1)$, there exists $\conerad(\Theta)>0$ depending only on $\Omega$ and $\Theta$ such that 
    \begin{equation}\label{eqn: conerad for all k}
        \max\left(\omega_{\Omega}(\conerad(\Theta)), \omega_{\Omega_k}(\conerad(\Theta))\right) < \sqrt{2-2\Theta},\qquad\forall k\in \mathbb{N}.
    \end{equation}
\end{lem}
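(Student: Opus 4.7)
The plan is to argue by contradiction, reducing to the two possible bad behaviors of a failing sequence: either a fixed $\Omega_k$ violates the estimate at small scales, or the violations occur along indices $k \to \infty$. In both cases we extract a subsequence that gives a contradiction via compactness and the $C^1$ assumption on $\Omega$.

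First I would suppose no such $\conerad(\Theta)$ works, so there is $r_j \searrow 0$ and, for each $j$, either a pair $X_1^j, X_2^j \in \partial\Omega$ or a pair $X_1^j, X_2^j \in \partial\Omega_{k_j}$ with $|X_1^j - X_2^j|<r_j$ and $|\mathcal{N}(X_1^j) - \mathcal{N}(X_2^j)| \geq \sqrt{2-2\Theta}$. Since $\Omega$ is compact and $C^1$, $\mathcal{N}_\Omega$ is uniformly continuous on $\partial\Omega$, so the first possibility can fail only for finitely many $j$. Hence (passing to a subsequence) we may assume all failures occur for pairs in some $\partial\Omega_{k_j}$.

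Next, split according to whether $k_j$ is bounded or not. If $\{k_j\}$ is bounded, pass to a further subsequence on which $k_j$ is a fixed value $k$. Each $\Omega_k$ is (smooth) uniformly convex, hence $\mathcal{N}_{\Omega_k}$ is (uniformly) continuous on the compact set $\partial\Omega_k$, and passing to yet another subsequence so that $X_1^j, X_2^j$ converge to a common point (they must, since $|X_1^j-X_2^j|\to 0$) yields an immediate contradiction.

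The remaining case $k_j \to \infty$ is the main one and uses Hausdorff convergence of $\Omega_{k_j}$ to $\Omega$. Extract subsequences so that $X_i^j \to X_\infty$ (the common limit, again because $|X_1^j-X_2^j|\to 0$) and $\mathcal{N}_{\Omega_{k_j}}(X_i^j) \to N_i^\infty$ for $i=1,2$. Then $|N_1^\infty - N_2^\infty| \geq \sqrt{2-2\Theta}$, and Hausdorff convergence of $\partial\Omega_{k_j}$ to $\partial\Omega$ places $X_\infty \in \partial\Omega$. The key step is to show $N_1^\infty = N_2^\infty = \mathcal{N}_\Omega(X_\infty)$: for any $Y\in\Omega$, Hausdorff convergence provides $Y_j \in \Omega_{k_j}$ with $Y_j \to Y$, and the convexity inequality $\inner{Y_j - X_i^j}{\mathcal{N}_{\Omega_{k_j}}(X_i^j)} \leq 0$ passes to the limit to give $\inner{Y-X_\infty}{N_i^\infty}\leq 0$. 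Thus each $N_i^\infty$ is an outward unit normal to $\Omega$ at $X_\infty$, and the $C^1$ assumption forces $N_1^\infty = N_2^\infty = \mathcal{N}_\Omega(X_\infty)$, contradicting the separation bound. The main obstacle — really the only substantive step — is this last Hausdorff-limit identification of normals, which is a standard but crucial property of convex bodies.
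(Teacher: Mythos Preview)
Your proposal is correct and follows essentially the same route as the paper: argue by contradiction, extract convergent subsequences of points and normals, use Hausdorff convergence to place the limit point on $\partial\Omega$, and pass the supporting-halfspace inequality to the limit to conclude both limit vectors lie in $\normal{\Omega}{X_\infty}$, contradicting $C^1$-ness. Your explicit treatment of the bounded-$k_j$ case is slightly more careful than the paper, which tacitly absorbs it into the main argument, but the substantive step (the Hausdorff-limit identification of normals) is identical.
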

\begin{proof}
Since $\partial\Omega$ compact and $C^1$, it is sufficient to verify there is a function such that\\ $\omega_{\Omega_k}(\conerad(\Theta)) < \sqrt{2-2\Theta}$ for all $k\in \mathbb{N}$. Suppose the claim fails, then we may assume there exist $X_{1, k}$, $X_{2, k}\in \partial \Omega_k\subset B^{n+1}_{R_\Omega}(0)$ such that $\abs{X_{1, k}-X_{2, k}}<\frac{1}{k}$ but $\abs{\normal{\Omega_k}{X_{1, k}}-\normal{\Omega_k}{X_{2, k}}}>\sqrt{2-2\Theta}$. By boundedness we may pass to subsequences to assume that for $i=1$ and $2$,  $X_{i, k}\to X_0$ and $\normal{\Omega_k}{X_{i, k}}\to V_i$ for some $X_0\in \R^{n+1}$ and unit length vectors $V_i$ satisfying $\abs{V_1-V_2}\geq \sqrt{2-2\Theta}$. By \cite[Theorem 14]{Wills07}, we have $\hdist{\partial\Omega_k}{\partial\Omega}=\hdist{\Omega_k}{\Omega}\to 0$, hence we find $X_0\in \partial \Omega$. Now for any $X\in \Omega$, there is a sequence $X_k\in \Omega_k$ converging to $X$, hence
\begin{align*}
    \inner{V_i}{X-X_0}&=\lim_{k\to\infty}\inner{\normal{\Omega_k}{X_{i, k}}}{X_k-X_{i, k}}\leq 0,
\end{align*}
hence $V_1$, $V_2\in \normal{\Omega}{X_0}$. However this is a contradiction as $\Omega$ is $C^1$ and has a unique unit outer normal.
\end{proof}
For the remainder of this paper we assume $\conerad$ is a decreasing function satisfying \eqref{eqn: conerad for all k} as in Lemma~\ref{lem: hausdorff stable}. Next we construct pairs of probability measures $\mu_k$ and $\mubar_k$ on $\partial \Omega_k$ from $\mu$ and $\mubar$ with densities bounded uniformly away from zero and infinity.
\begin{lem}
\label{lem: rad proj bi-Lip}
If $\pi_\Omega : \partial \Omega \to \S^n$ is defined by
\begin{equation}\label{eqn: sphere projection map}
    \pi_\Omega(X) = \frac{X}{|X|},
\end{equation}
then $\pi_\Omega$ satisfies
\begin{equation}\label{eqn: proj lipschitz bound}
    \frac{1}{\tilde L_{\pi_\Omega}}|X-Y| \leq d_{\S^n}(\pi_\Omega(X), \pi_\Omega(Y)) \leq \tilde L_{\pi_\Omega}d_{\partial\Omega}(X,Y),
\end{equation}
where $\tilde L_{\pi_\Omega}>0$ only depends on $\inrad$ and $\outrad$.
\end{lem}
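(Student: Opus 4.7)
The plan is to prove the two inequalities separately, leveraging convexity of $\Omega$ and the two-sided ball inclusion $B^{n+1}_{\inrad}(0)\subset \Omega\subset B^{n+1}_{\outrad}(0)$ from \eqref{eqn: uniform balls}.

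For the upper bound, I would take any $C^1$ curve $\sigma:[0,1]\to \partial\Omega$ joining $X$ and $Y$, and compute directly
\begin{equation*}
\frac{d}{dt}\pi_\Omega(\sigma(t)) = \frac{\dot\sigma(t)}{\abs{\sigma(t)}} - \frac{\inner{\sigma(t)}{\dot\sigma(t)}\sigma(t)}{\abs{\sigma(t)}^3},
\end{equation*}
so that an elementary calculation gives $\abs{\tfrac{d}{dt}\pi_\Omega(\sigma(t))}^2 = \abs{\dot\sigma(t)}^2/\abs{\sigma(t)}^2 - \inner{\sigma(t)}{\dot\sigma(t)}^2/\abs{\sigma(t)}^4 \leq \abs{\dot\sigma(t)}^2/\inrad^2$. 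Since the image curve lies in $\S^n$, its spherical length agrees with its Euclidean length, so integrating and taking the infimum over $\sigma$ yields $d_{\S^n}(\pi_\Omega(X),\pi_\Omega(Y))\leq \inrad^{-1}d_{\partial\Omega}(X,Y)$.

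For the lower bound, I would introduce the radial function $\rho:\S^n\to [\inrad,\outrad]$ defined by $\rho(\omega):=\abs{\pi_\Omega^{-1}(\omega)}$, so that $X=\rho(\pi_\Omega(X))\pi_\Omega(X)$, and split
\begin{equation*}
\abs{X-Y}\leq \outrad\abs{\pi_\Omega(X)-\pi_\Omega(Y)}+\abs{\rho(\pi_\Omega(X))-\rho(\pi_\Omega(Y))}.
\end{equation*}
The first term is controlled by $\outrad d_{\S^n}(\pi_\Omega(X),\pi_\Omega(Y))$. For the second, I would show $\rho$ is Lipschitz on $\S^n$ with constant depending only on $\inrad,\outrad$. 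The key geometric fact is that for any $P\in\partial\Omega$ and any unit outward normal $V$ at $P$, the supporting hyperplane at $P$ is at distance $\inner{P}{V}\geq \inrad$ from the origin (since $B^{n+1}_{\inrad}(0)\subset\Omega$), hence $\inner{\pi_\Omega(P)}{V}\geq \inrad/\outrad$. Using the supporting hyperplane at $P_1:=\rho(\omega_1)\omega_1$ with normal $V_1$, the inequality $\inner{P_2}{V_1}\leq \inner{P_1}{V_1}$ becomes $\rho(\omega_2)\inner{\omega_2}{V_1}\leq \rho(\omega_1)\inner{\omega_1}{V_1}$, and provided $\abs{\omega_1-\omega_2}\leq \inrad/(2\outrad)$ we have $\inner{\omega_2}{V_1}\geq \inrad/(2\outrad)$, which after rearranging yields $\rho(\omega_2)-\rho(\omega_1)\leq 2\outrad^2\inrad^{-1}\abs{\omega_1-\omega_2}$; symmetry handles the reverse, so $\rho$ is locally Lipschitz with constant $2\outrad^2/\inrad$.

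The main obstacle is extending the Lipschitz estimate on $\rho$ from nearby pairs to arbitrary pairs on $\S^n$: for $\omega_1,\omega_2$ with $\abs{\omega_1-\omega_2}\geq \inrad/(2\outrad)$, the local argument fails because the supporting hyperplane at $P_1$ need not satisfy $\inner{\omega_2}{V_1}>0$. I would handle this by the trivial bound $\abs{\rho(\omega_1)-\rho(\omega_2)}\leq \outrad-\inrad\leq \outrad\leq (2\outrad^2/\inrad)\abs{\omega_1-\omega_2}$ in the complementary regime. Combining with $\abs{\omega_1-\omega_2}\leq d_{\S^n}(\omega_1,\omega_2)$ then gives the desired lower estimate with $\tilde L_{\pi_\Omega}:=\max(\inrad^{-1},\outrad+2\outrad^2\inrad^{-1})$, which depends only on $\inrad$ and $\outrad$.
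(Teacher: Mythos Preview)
Your proof is correct. Both inequalities are established, and the dependence of the constant on $\inrad,\outrad$ alone is clear.

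Your approach differs from the paper's in both halves. For the upper bound, the paper argues purely algebraically: from $|X-Y|^2 = |X||Y|\bigl(\tfrac{|X|}{|Y|}+\tfrac{|Y|}{|X|}-2\tfrac{\langle X,Y\rangle}{|X||Y|}\bigr)$ and AM--GM it obtains $|X-Y|\geq \inrad\,|\pi_\Omega(X)-\pi_\Omega(Y)|$, then (implicitly) uses the comparability of chordal and geodesic distance on $\S^n$. Your curve-differentiation argument is slightly more direct for the stated $d_{\S^n}$ inequality, since integrating $|\tfrac{d}{dt}\pi_\Omega(\sigma(t))|$ bounds the spherical length without an extra chordal-to-geodesic step; it does rely on $\partial\Omega$ being $C^1$ so that $C^1$ curves are available, which is the standing hypothesis. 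For the lower bound, both proofs introduce the radial function $\rho$ and reduce to a Lipschitz estimate on $\rho$; the paper dispatches this by citing \cite{Toranzos67}, whereas you supply a self-contained supporting-hyperplane argument with an explicit local constant $2\outrad^2/\inrad$ and a trivial bound in the far regime. Your route is more elementary and yields an explicit constant; the paper's is shorter by outsourcing the geometric lemma.
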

\begin{proof}
We first show that the map $\pi_\Omega : \partial \Omega \to \S^n$ is Lipschitz. Indeed,
\begin{align*}
d_{\partial\Omega}(X,Y)^2&\geq|X-Y|^2  = |X|^2 + |Y|^2 - 2\inner{X}{Y} 
 = |X||Y| \left( \frac{|X|}{|Y|} + \frac{|Y|}{|X|} - 2 \frac{\inner{X}{Y}}{|X||Y|} \right) \\
& \geq \inrad^2 \left( 2\sqrt{\frac{|X|}{|Y|} \cdot \frac{|Y|}{|X|}} - 2 \frac{\inner{X}{Y}}{|X||Y|} \right) 
 = \inrad^2 \left( 2 - 2 \frac{\inner{X}{Y}}{|X||Y|} \right) 
 = \inrad^2 \left| \frac{X}{|X|} - \frac{Y}{|Y|} \right|^2.
\end{align*}
Next we show that the inverse map $\pi_\Omega^{-1}: \S^n \to \partial \Omega$ is Lipschitz. Let $\mathcal{R}: \S^n\to [\inrad, \outrad]$ denote the radial function of $\Omega$, that is,
\begin{align*}
    \mathcal{R}(w):=\sup\left\{r>0\mid rw\in \Omega\right\}.
\end{align*}
Then using \cite[Theorem 2]{Toranzos67} in the final line below, for any $X$, $Y\in \partial \Omega$ we have 
\begin{align*}
    \lvert X-Y\rvert&=\left\lvert \mathcal{R}(\frac{X}{\lvert X\rvert})\frac{X}{\lvert X\rvert}-\mathcal{R}(\frac{Y}{\lvert Y\rvert})\frac{Y}{\lvert Y\rvert}\right\rvert\\
    &\leq \left\lvert \mathcal{R}(\frac{X}{\lvert X\rvert})\frac{X}{\lvert X\rvert}-\mathcal{R}(\frac{Y}{\lvert Y\rvert})\frac{X}{\lvert X\rvert}\right\rvert+\left\lvert \mathcal{R}(\frac{Y}{\lvert Y\rvert})\frac{X}{\lvert X\rvert}-\mathcal{R}(\frac{Y}{\lvert Y\rvert})\frac{Y}{\lvert Y\rvert}\right\rvert\\
    &\leq \left\lvert \mathcal{R}(\frac{X}{\lvert X\rvert})-\mathcal{R}(\frac{Y}{\lvert Y\rvert})\right\rvert+\outrad\left\lvert \frac{X}{\lvert X\rvert}-\frac{Y}{\lvert Y\rvert}\right\rvert\leq C d_{\S^n} \left(\frac{X}{\lvert X\rvert}, \frac{Y}{\lvert Y\rvert}\right)
\end{align*}
for some constant $C>0$ depending only on $\inrad$ and $\outrad$.
\end{proof}
Next we show a quick lemma on the uniform comparability of the intrinsic distances on $\partial \Omega_k$ and the Euclidean distance. 
\begin{lem}\label{lem: uniform comparability}
    There exists a constant $C_\Omega>0$ depending only on $\Omega$ such that $d_{\partial\Omega_k}(X, Y)\leq C_\Omega \lvert X-Y\rvert$ for all $X$, $Y\in \partial \Omega_k$ and all $k$ sufficiently large.
\end{lem}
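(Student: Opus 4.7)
The plan is to split based on the size of $|X-Y|$, using Lemma~\ref{lem: hausdorff stable} (with $\Theta=\tfrac{1}{2}$, say) to extract a cone radius $\conerad(\tfrac{1}{2})>0$ that works simultaneously for $\Omega$ and every $\Omega_k$. Since $\Omega_k\subset B^{n+1}_{\outrad}(0)$ is uniform in $k$, we also have $\diam(\Omega_k)\leq 2\outrad$ uniformly. Let $r:=\conerad(\tfrac{1}{2})/8$ and $L:=8\outrad/\conerad(\tfrac{1}{2})$, both independent of $k$.

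For the ``close'' case $|X-Y|<r$, I would apply Lemma~\ref{lem: interior cone} to $\Omega_k$ to conclude $Y\in \partial\Omega_k^+(X)$, since $|X-Y|<\conerad(\tfrac{1}{2})$. After choosing coordinates so that $\normal{\Omega_k}{X}=e_{n+1}$, writing $y_0:=\proj^{\Omega_k}_X(X)$ and $y:=\proj^{\Omega_k}_X(Y)$, Remark~\ref{rmk: uni-sized nbhd in proj} applied to $\Omega_k$ gives $y\in B^n_r(y_0)\subset B^n_{\conerad(1/2)/4}(y_0)\subset \Omega_{k,X}$. The curve $\sigma(t):=((1-t)y_0+ty, \beta_{X,\Omega_k}((1-t)y_0+ty))$ for $t\in[0,1]$ lies in $\partial\Omega_k$ and connects $X$ to $Y$. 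The version of Lemma~\ref{lem: unif size nbhd lip beta} for $\Omega_k$ (which applies with the uniform Lipschitz constant $L$ because $\diam(\Omega_k)\leq 2\outrad$ and $\conerad(\tfrac{1}{2})$ is uniform) bounds the Lipschitz constant of $t\mapsto \beta_{X,\Omega_k}((1-t)y_0+ty)$ by $L|y-y_0|$, so the length of $\sigma$ is at most $\sqrt{1+L^2}|y-y_0|\leq \sqrt{1+L^2}|X-Y|$. Hence $d_{\partial\Omega_k}(X,Y)\leq \sqrt{1+L^2}|X-Y|$.

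For the ``far'' case $|X-Y|\geq r$, the idea is to slice by a $2$-plane and reduce to a planar convex curve. Pick any $2$-plane $P$ containing $X$, $Y$, and $0$ (which lies in $\interior(\Omega_k)$ by \eqref{eqn: uniform balls}). Then $P\cap \Omega_k$ is a $2$-dimensional convex body with $\partial(P\cap \Omega_k)=P\cap \partial \Omega_k$, and by monotonicity of perimeter for convex bodies in $\R^2$ we have $\mathcal{H}^1(P\cap \partial\Omega_k)\leq \mathcal{H}^1(\partial (P\cap B^{n+1}_{\outrad}(0)))\leq 2\pi\outrad$. On any closed convex curve, the shorter arc between two points has length at most half the perimeter, so $d_{\partial\Omega_k}(X,Y)\leq d_{P\cap\partial\Omega_k}(X,Y)\leq \pi\outrad\leq (\pi\outrad/r)|X-Y|$. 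Setting $C_\Omega:=\max\bigl(\sqrt{1+L^2},\ \pi\outrad/r\bigr)$, which depends only on $\Omega$, gives the claim.

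The only real obstacle is uniformity of all constants in $k$, which is exactly what Lemma~\ref{lem: hausdorff stable}, the uniformity of $\inrad$ and $\outrad$, and the form of the Lipschitz constant in Lemma~\ref{lem: unif size nbhd lip beta} (depending only on $\diam$ and $\conerad$) are designed to provide; the reason for the ``$k$ sufficiently large'' hypothesis is only to absorb any pre-asymptotic pathologies in the approximating sequence and does not affect the structure of the argument.
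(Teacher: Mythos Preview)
Your argument is correct and follows essentially the same two-case structure as the paper: for nearby points you parametrize by a straight segment in $\Omega_{k,X}$ and invoke the uniform Lipschitz bound on $\beta_{X,\Omega_k}$ from Lemma~\ref{lem: unif size nbhd lip beta}, and for far points you bound the geodesic distance by a universal multiple of $\outrad$. The only substantive difference is in the far case: the paper simply cites \cite[Theorem 1]{Makai73} to obtain $d_{\partial\Omega_k}(X,Y)\leq \pi\outrad$, whereas you give a self-contained argument by slicing with a $2$-plane through the origin and using perimeter monotonicity for planar convex bodies. Your route is slightly more elementary and avoids an external reference, at the cost of a few extra lines; otherwise the proofs and the resulting constants coincide up to harmless numerical factors.
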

\begin{proof}
    If $X$, $Y\in \partial \Omega_k$ with $\lvert X-Y\rvert\leq \frac{\conerad(\frac{1}{2})}{4}$, by Remark~\ref{rmk: uni-sized nbhd in proj} we have $Y\in \proj_X^{\Omega_k}(\partial\Omega_k^+(X))$, and we can write a portion of $\partial \Omega_k$ as the graph of a $C^1$ concave function $\beta_k$ over $\proj_X^{\Omega_k}(\partial\Omega_k^+(X))$. Rotate coordinates so that $\normal{\Omega_k}{X}=e_{n+1}$ and write $x:=\proj_X^{\Omega_k}(X)$, $y=\proj_X^{\Omega_k}(Y)$, then for all $t\in [0, 1]$ the segment $x_t:=(1-t)x+ty$ lies in the domain of $\beta_k$ and we can define the $C^1$ curve $\gamma(t):=(x_t, \beta_k(x_t))$ 
    lying in $\partial \Omega_k$.
    Then by Lemma~\ref{lem: unif size nbhd lip beta}, 
    \begin{align*}
        d_{\partial \Omega_k}(X, Y)&\leq \int_0^1 \lvert \dot \gamma(t)\rvert dt=\int_0^1 \sqrt{\lvert x-y\rvert^2+\inner{ \nabla^n\beta_k(x_t)}{y-x}^2}dt\leq \lvert X-Y\rvert\sqrt{1+\frac{64\outrad^2}{\conerad(\frac{1}{2})}}.
    \end{align*}
    Otherwise by \cite[Theorem 1]{Makai73}, we find that
    \begin{align*}
        d_{\partial \Omega_k}(X, Y)\leq \pi\outrad\leq \frac{4\pi\outrad}{\conerad(\frac{1}{2})}\lvert X-Y\rvert.
    \end{align*}
\end{proof}
We are now ready to construct the aforementioned family of measures with density bounds.
\begin{lem}
\label{lem: rad proj push}
Let $\pi_\Omega : \partial \Omega \to \S^n$, $\pi_{\Omega_k} : \partial \Omega_k \to \S^n$ be defined by \eqref{eqn: sphere projection map}, and
\begin{align*}
    \mu_k:=(\pi_{\Omega_k}^{-1}\circ \pi_\Omega)_\sharp \mu, \quad\mubar_k:=(\pi_{\Omega_k}^{-1}\circ \pi_\Omega)_\sharp \mubar.
\end{align*}
Then $\mu_k$ and $\mubar_k$ are probability measures on $\partial\Omega_k$, absolutely continuous with respect to $\mathcal{H}^n$ with densities $\rho_k$ and $\rhobar_k$ where
\begin{align*}
    L_{\pi_\Omega}^{-2n} (\essinf_{\partial\Omega} \rho) &\leq \rho_k \leq L_{\pi_\Omega}^{2n} (\esssup_{\partial\Omega} \rho),\quad L_{\pi_\Omega}^{-2n} (\essinf_{\partial\Omega} \rhobar) \leq \rhobar_k \leq L_{\pi_\Omega}^{2n} (\esssup_{\partial\Omega} \rhobar),
\end{align*}
for some constant $L_{\pi_\Omega}\geq 1$ which depends only on $\Omega$.
\end{lem}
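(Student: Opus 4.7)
The plan is to show that $F_k := \pi_{\Omega_k}^{-1}\circ \pi_\Omega$ is a bi-Lipschitz homeomorphism from $\partial\Omega$ onto $\partial\Omega_k$ with a bi-Lipschitz constant uniform in $k$, and then to read off the density bounds via the area formula. Since $\Omega$ and each $\Omega_k$ are convex bodies containing $B^{n+1}_{\inrad}(0)$, every ray from the origin meets each of $\partial\Omega$, $\partial\Omega_k$, and $\S^n$ in exactly one point, so $\pi_\Omega$ and $\pi_{\Omega_k}$ are bijections onto $\S^n$ and hence so is $F_k$. In particular $\mu_k = (F_k)_\sharp \mu$ and $\mubar_k=(F_k)_\sharp \mubar$ are automatically probability measures on $\partial\Omega_k$.

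I would next establish the uniform bi-Lipschitz bound by chaining Lemma~\ref{lem: rad proj bi-Lip} applied to $\pi_\Omega$ and to $\pi_{\Omega_k}$: the constant $\tilde L_{\pi_\Omega}$ produced there depends only on $\inrad$ and $\outrad$, and by construction $\inrad[k]=\inrad$, $\outrad[k]=\outrad$ for every $k$, so the same estimate applies uniformly to both projections. Lemma~\ref{lem: rad proj bi-Lip} mixes intrinsic distance on $\partial\Omega$ (resp.\ $\partial\Omega_k$) with Euclidean distance on $\S^n$, so I would convert the intrinsic distance on $\partial\Omega$ using \eqref{eqn: geodesic distance bound}, and the one on $\partial\Omega_k$ using Lemma~\ref{lem: uniform comparability}, which supplies $d_{\partial\Omega_k}(X,Y)\leq C_\Omega|X-Y|$ with $C_\Omega$ independent of $k$. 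Composing the resulting inequalities yields
\[
L_{\pi_\Omega}^{-2}|X-Y|\leq |F_k(X)-F_k(Y)|\leq L_{\pi_\Omega}^{2}|X-Y|
\]
for all $X,Y\in\partial\Omega$ and all $k$ sufficiently large, for some $L_{\pi_\Omega}\geq 1$ absorbing $\tilde L_{\pi_\Omega}$, $C_{\partial\Omega}$, and $C_\Omega$ and depending only on $\Omega$.

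With the uniform bi-Lipschitz estimate in hand, Rademacher's theorem (applied to $F_k$ as a Lipschitz map between the $C^1$ manifold $\partial\Omega$ and the smooth manifold $\partial\Omega_k$) ensures that $F_k$ is tangentially differentiable $\mathcal{H}^n$-a.e., and every singular value of the tangential differential lies in $[L_{\pi_\Omega}^{-2},L_{\pi_\Omega}^{2}]$, so the tangential Jacobian $J_{F_k}$ satisfies $L_{\pi_\Omega}^{-2n}\leq J_{F_k}\leq L_{\pi_\Omega}^{2n}$. The area formula for Lipschitz maps between rectifiable sets then gives, for any Borel $E\subset\partial\Omega_k$,
\[
\mu_k(E)=\mu(F_k^{-1}(E))=\int_{F_k^{-1}(E)}\rho\,d\mathcal{H}^n=\int_E\frac{\rho(F_k^{-1}(Y))}{J_{F_k}(F_k^{-1}(Y))}\,d\mathcal{H}^n(Y),
\]
so $\rho_k(Y)=\rho(F_k^{-1}(Y))/J_{F_k}(F_k^{-1}(Y))$ for $\mathcal{H}^n$-a.e.\ $Y\in\partial\Omega_k$, which combined with the Jacobian bounds yields the stated inequalities for $\rho_k$. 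The argument for $\rhobar_k$ is identical.

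The main obstacle is the uniformity in $k$ of the bi-Lipschitz constant for $F_k$ in the Euclidean metric. This depends crucially on the choice of approximating sequence so that $\inrad[k]=\inrad$, $\outrad[k]=\outrad$ in Lemma~\ref{lem: rad proj bi-Lip}, and on Lemma~\ref{lem: uniform comparability} so that the conversion from intrinsic to Euclidean distance on the varying boundaries $\partial\Omega_k$ does not introduce a $k$-dependent factor. Once the uniform bi-Lipschitz bound is secured, the rest reduces to a standard application of the area formula.
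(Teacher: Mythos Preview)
Your proposal is correct and follows essentially the same strategy as the paper: both establish that the radial transfer map $F_k=\pi_{\Omega_k}^{-1}\circ\pi_\Omega$ is uniformly bi-Lipschitz by combining Lemma~\ref{lem: rad proj bi-Lip} with Lemma~\ref{lem: uniform comparability}, and then deduce the density bounds from this. The only minor difference is in the final conversion step: the paper bounds $\mu_k$ of small geodesic balls using the scaling of Hausdorff measure under Lipschitz maps and then invokes the Lebesgue differentiation theorem, whereas you compute the density directly via Rademacher and the area formula; these are interchangeable standard arguments yielding the same bounds.
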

\begin{proof}
    Let us write $B^{\Omega_k}_r(X)$ for an open geodesic ball in $\partial \Omega_k$ of radius $r$ centered at $X$. Fix $X_0\in \partial \Omega_k$ and $r>0$ small. Combining Lemmas~\ref{lem: rad proj bi-Lip} and \ref{lem: uniform comparability}, we can see $\pi_\Omega$ and $\pi_{\Omega_k}$ are all uniformly Lipschitz with respect to the intrinsic distances, with some constant $L_{\pi_\Omega}>0$ depending only on $\Omega$ (which we may assume greater than $1$), thus using \cite[Theorem 7.14.33]{Bogachev07} we obtain
    \begin{align*}
        \mu_k(B^{\Omega_k}_r(X_0))&=\int_{\pi_\Omega^{-1}(\pi_{\Omega_k}(B^{\Omega_k}_r(X_0)))}\rho d\mathcal{H}^n\\
        &\leq (\esssup_{\partial \Omega}\rho) \mathcal{H}^n(\pi_\Omega^{-1}(\pi_{\Omega_k}(B^{\Omega_k}_r(X_0))))\leq L_{\pi_\Omega}^{2n} (\esssup_{\partial \Omega}\rho) \mathcal{H}^n(B^{\Omega_k}_r(X_0))
    \end{align*}
    and 
    \begin{align*}
        \mu_k(B^{\Omega_k}_r(X_0))&=\int_{\pi_\Omega^{-1}(\pi_{\Omega_k}(B^{\Omega_k}_r(X_0)))}\rho d\mathcal{H}^n\\
        &\geq (\essinf_{\partial \Omega}\rho) \mathcal{H}^n(\pi_\Omega^{-1}(\pi_{\Omega_k}(B^{\Omega_k}_r(X_0))))\geq L_{\pi_\Omega}^{-2n} (\essinf_{\partial \Omega}\rho) \mathcal{H}^n(B^{\Omega_k}_r(X_0)).
    \end{align*}
    By the Lebesgue differentiation theorem (for example, \cite[Theorem 5.8.8]{Bogachev07}), dividing the above by $\mathcal{H}^n(B^{\Omega_k}_r(X_0))$ and taking $r\to 0$ yields the lemma (with a similar argument for $\mubar_k$).
\end{proof}

For the remainder of the paper, we take
\begin{align}\label{eqn: density lower bound}
    \rho_0:=L_{\pi_\Omega}^{-2n}\min\left(\essinf_{\partial \Omega}\rho, \essinf_{\partial \Omega}\rhobar\right).
\end{align}
If $\gamma$ is a Kantorovich solution between $\mu$ and $\mubar$ (which exists by \cite[Theorem 4.1]{Villani09}), then $((\pi_{\Omega_k}^{-1}\circ \pi_\Omega)\times (\pi_{\Omega_k}^{-1}\circ \pi_\Omega))_\sharp \gamma\in \Pi(\mu_k, \mubar_k)$, hence using Lemma~\ref{lem: rad proj bi-Lip},
\begin{align}\label{eqn: W2 approximation bound}
    \W_2^2(\mu_k, \mubar_k)&\leq \int_{\partial \Omega_k\times\partial\Omega_k}\lvert x-y\rvert^2 \distop((\pi_{\Omega_k}^{-1}\circ \pi_\Omega)\times (\pi_{\Omega_k}^{-1}\circ \pi_\Omega))_\sharp \gamma(x, y)\notag\\
    &= \int_{\partial \Omega\times\partial\Omega}\lvert \pi_{\Omega_k}^{-1}\circ \pi_\Omega(x)-\pi_{\Omega_k}^{-1}\circ \pi_\Omega(y)\rvert^2 d \gamma(x, y)\notag\\
    &\leq L_{\pi_\Omega}^2\int_{\partial \Omega\times\partial\Omega}\lvert x-y\rvert^2 d \gamma(x, y)= L_{\pi_{\Omega}}^2\W_2^2(\mu, \mubar).
\end{align} 
\begin{lem}\label{lem: measures weakly converge}
The sequences $\mu_k$ and $\mubar_k$ weakly converge to $\mu$ and $\mubar$ respectively.
\end{lem}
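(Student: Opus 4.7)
The plan is to establish uniform convergence of the maps $\Phi_k := \pi_{\Omega_k}^{-1}\circ \pi_\Omega : \partial \Omega \to \R^{n+1}$ to the inclusion $\iota: \partial \Omega \hookrightarrow \R^{n+1}$, from which weak convergence will follow by a routine change-of-variables argument. Note first that by \eqref{eqn: uniform balls} applied to $\Omega$ and each $\Omega_k$, all the measures $\mu_k$, $\mubar_k$, $\mu$, $\mubar$ are supported in the common compact set $\overline{B^{n+1}_{\outrad}(0)}$, so weak convergence $\mu_k \weaklim \mu$ is equivalent to showing $\int f\, d\mu_k \to \int f\, d\mu$ for every $f \in C(\overline{B^{n+1}_{\outrad}(0)})$, and similarly for $\mubar_k$.

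To prove the uniform convergence of $\Phi_k$, I would reintroduce the radial functions $\rho_\Omega, \rho_{\Omega_k}: \S^n \to [\inrad, \outrad]$ as in the proof of Lemma~\ref{lem: rad proj bi-Lip}, so that $\pi_\Omega^{-1}(\omega) = \rho_\Omega(\omega)\omega$ and $\pi_{\Omega_k}^{-1}(\omega) = \rho_{\Omega_k}(\omega)\omega$. By \cite[Theorem 2]{Toranzos67} combined with the uniform bounds \eqref{eqn: uniform balls}, each of $\rho_\Omega$ and the $\rho_{\Omega_k}$ is Lipschitz on $\S^n$ with a common constant $L = L(\inrad, \outrad)$. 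The key claim is that $\lVert \rho_{\Omega_k} - \rho_\Omega\rVert_{C^0(\S^n)} \to 0$: given $\omega \in \S^n$, the point $\rho_{\Omega_k}(\omega)\omega \in \partial \Omega_k$ lies within $\hdist{\partial\Omega_k}{\partial\Omega}$ of some $Z \in \partial\Omega$, and a short computation yields both $\lvert \lvert Z\rvert - \rho_{\Omega_k}(\omega)\rvert \leq \hdist{\partial\Omega_k}{\partial\Omega}$ and $d_{\S^n}(Z/\lvert Z\rvert, \omega) \leq C\hdist{\partial\Omega_k}{\partial\Omega}/\inrad$; the Lipschitz estimate on $\rho_\Omega$ together with $\lvert Z\rvert = \rho_\Omega(Z/\lvert Z\rvert)$ then controls $\rho_{\Omega_k}(\omega) - \rho_\Omega(\omega)$ from above, and the reverse inequality is obtained symmetrically using the uniform Lipschitz bound on $\rho_{\Omega_k}$ (this is exactly where uniformity in $k$ is crucial). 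Recalling from the paragraph before Lemma~\ref{lem: hausdorff stable} that $\hdist{\partial\Omega_k}{\partial\Omega}\to 0$, the claim follows, and since
\begin{align*}
    \sup_{X\in\partial\Omega}\lvert \Phi_k(X) - X\rvert = \sup_{\omega \in \S^n}\lvert \rho_{\Omega_k}(\omega) - \rho_\Omega(\omega)\rvert,
\end{align*}
this gives the uniform convergence $\Phi_k \to \iota$ on $\partial\Omega$.

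To finish, fix any $f \in C(\overline{B^{n+1}_{\outrad}(0)})$. Since $f$ is uniformly continuous on this compact set, combining with the uniform convergence $\Phi_k \to \iota$ yields $f \circ \Phi_k \to f|_{\partial \Omega}$ uniformly on $\partial \Omega$, and the pushforward formula gives
\begin{align*}
    \int f\, d\mu_k = \int_{\partial\Omega} f\circ \Phi_k\, d\mu \longrightarrow \int_{\partial\Omega} f\, d\mu,
\end{align*}
so that $\mu_k \weaklim \mu$; replacing $\rho$ by $\rhobar$ throughout gives $\mubar_k \weaklim \mubar$. The only mildly technical step is the uniform radial-function comparison, which rests on the common inner ball $B^{n+1}_{\inrad}(0) \Subset \Omega_k$ and the uniform Lipschitz bound from Toranzos; everything else is a purely formal consequence of Hausdorff convergence and uniform continuity.
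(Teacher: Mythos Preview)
Your proof is correct and takes a somewhat different route from the paper's. The paper establishes only \emph{pointwise} convergence of $\Phi_k=\pi_{\Omega_k}^{-1}\circ\pi_\Omega$ to the identity: for fixed $X\in\partial\Omega$ one has $\Phi_k(X)=t_kX$ for some $t_k>0$, and a short subsequence/compactness argument (any limit $t_\infty X$ must lie on $\partial\Omega$, hence $t_\infty=1$ since $0$ is interior to the convex $\Omega$) shows $t_k\to 1$. Weak convergence then follows from dominated convergence applied to $\int_{\partial\Omega}\phi\circ\Phi_k\,d\mu$. Your argument instead upgrades pointwise to \emph{uniform} convergence by passing through the radial functions and exploiting their uniform Lipschitz bounds (Toranzos plus the common inner ball); this is slightly more work but yields a stronger intermediate statement and lets you dispense with dominated convergence in favor of uniform continuity of the test function. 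Both approaches are valid; the paper's is shorter, while yours gives a quantitative rate $\sup_X\lvert\Phi_k(X)-X\rvert\leq C\,\hdist{\partial\Omega_k}{\partial\Omega}$ that could be useful elsewhere.
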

\begin{proof}
Fix $X\in \partial \Omega$, then the sequence of points $\{\pi_{\Omega_k}^{-1}(\pi_\Omega(X))\}_{k=1}^\infty$ lies on the ray through $X$ emanating from the origin, hence $\pi_{\Omega_k}^{-1}(\pi_\Omega(X))=t_kX$ for some $t_k>0$. Suppose we consider an arbitrary subsequence of $\{t_k\}_{k=1}^\infty$, not relabeled. Since $\partial\Omega_k\subset B^{n+1}_{\outrad}(0)$ for all $k$, we may pass to a further subsequence to assume $\lim_{k\to\infty}t_k=t_\infty$ for some $t_\infty\in [\inrad/\abs{X}, \outrad/\abs{X}]$. By Hausdorff convergence, there exist $X_k\in \partial \Omega$ such that $\lim_{k\to\infty}\lvert X_k-t_kX\rvert=0$. Again by compactness, passing to a subsequence we can assume $X_k\to X_\infty$ for some $X_\infty\in \partial \Omega$. Thus $X_\infty=t_\infty X$, however since $0$ is in the interior of $\Omega$ and $\Omega$ is convex, this is only possible if $t_\infty=1$. Hence we see any subsequence has a further subsequence for which
\begin{align*}
    \lim_{k\to\infty}\pi_{\Omega_k}^{-1}(\pi_\Omega(X))=X,
\end{align*}
hence the full sequence also exhibits the same convergence, in other words $\pi_{\Omega_k}^{-1}\circ\pi_\Omega$ converges pointwise to the identity map. Now suppose $\phi\in C_b(\R^n)$, then using dominated convergence we have
\begin{align*}
    \int_{\R^n}\phi d\mu_k&=\int_{\partial\Omega}\phi(\pi_{\Omega_k}^{-1}(\pi_\Omega(X)))d\mu(X)\to \int_{\partial\Omega}\phi(X) d\mu(X)=\int_{\R^n}\phi d\mu,
\end{align*}
proving weak convergence of $\mu_k$ to $\mu$; a similar argument shows $\mubar_k$ weakly converges to $\mubar$.
\end{proof}
\begin{rmk}\label{rmk: limiting potential}
    By \cite[Theorem 5.10 (iii)]{Villani09} there exist $c$-convex functions $u_k$ where $(u_k, u_k^c)$ are maximizers in the dual problem \eqref{eqn: kantorovich dual} between $\mu_k$ and $\mubar_k$ with $\partial\Omega$ replaced by $\partial\Omega_k$, translating by constants we may assume $u_k(0)=0$ for all $k$. Since the measures $\mu_k$ and $\mubar_k$ are all supported in $B^{n+1}_{\outrad}(0)$, we may apply \cite[Theorem 1.52]{Santambrogio15} to pass to a subsequence and find that $u_k$ converges uniformly on $B^{n+1}_{\outrad}(0)$ to some $u$ where $(u, u^c)$ is a maximizer in the dual problem \eqref{eqn: kantorovich dual} between $\mu$ and $\mubar$. By \cite[Theorem 5.10 (iii)]{Villani09}, there exists a pair $(\tilde u, \tilde u^c)$ which is a maximizer in the dual problem and $\tilde u$ is $c$-convex, by \cite[Proposition 4.1]{Loeper09} we see $u-\tilde u$ is a constant, hence $u$ is also $c$-convex.
\end{rmk}

We now show we can control the Lipschitz constant of the limiting $u$ in terms of $\W_2(\mu, \mubar)$. This is done by applying the stay-away estimate Proposition~\ref{prop: stay away}. However, we cannot directly apply the estimate on $\partial\Omega$, as the estimate is only applicable to points in $\partial\Omega^+(X)\cap \csubdiff[\Omega]{u}{X}$. The results of \cite{GangboMcCann00} will guarantee the existence of such points for \emph{uniformly convex} domains, thus we must first apply the estimate to the approximating bodies, then pass to the limit, exploiting that the constants in Proposition~\ref{prop: stay away} are stable under Hausdorff convergence.
\begin{lem}\label{lem: limiting potential lip bound}
The potential $u$ is $d_{\partial \Omega}$-Lipschitz with constant $C_{\rho_0, \Omega}L_{\pi_\Omega}^2 \W_2(\mu, \mubar)^{\frac{2}{n+2}}$.
\end{lem}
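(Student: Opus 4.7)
The idea is to apply Proposition~\ref{prop: stay away} on the uniformly convex approximants $\Omega_k$ to obtain a uniform $d_{\partial\Omega_k}$-Lipschitz bound for $u_k$, and then transport this bound back to $\partial\Omega$ via the radial projections $\pi_{\Omega_k}^{-1}\circ\pi_\Omega$, exploiting the uniform convergence $u_k\to u$.

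\emph{Uniform Lipschitz bound on each $\partial\Omega_k$.} Because $\Omega_k$ is uniformly convex, the Gangbo--McCann theory \cite{GangboMcCann00} forces the unique Kantorovich plan between $\mu_k$ and $\bar\mu_k$ to concentrate on same-side pairs $(X,\bar X)$ with $\bar X\in\partial\Omega_k^+(X)$; combined with the uniform lower bound $\rho_k\ge\rho_0$ from Lemma~\ref{lem: rad proj push} and \eqref{eqn: density lower bound}, this yields, for $\mathcal H^n$-a.e.\ $X\in\partial\Omega_k$, some $\bar X\in\csubdiff[\Omega_k]{u_k}{X}\cap\partial\Omega_k^+(X)$. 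At $\mathcal H^n$-a.e.\ such $X$, $u_k$ is tangentially differentiable (Rademacher, using Lipschitzness from Remark~\ref{rmk: subdifferentials}) with $\nabla^{\Omega_k}u_k(X)=\proj^{\Omega_k}_X(\bar X-X)$, so Proposition~\ref{prop: stay away} gives
\[
|\nabla^{\Omega_k}u_k(X)|\le M_k:=C_{\rho_0,\Omega}\W_2(\mu_k,\bar\mu_k)^{2/(n+2)}.
\]
The constant $C_{\rho_0,\Omega}$ is $k$-independent: $\conerad$ can be taken uniformly by Lemma~\ref{lem: hausdorff stable}, $\diam(\Omega_k)\le 2\outrad$, and $\rho_0$ is fixed. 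Integrating along $d_{\partial\Omega_k}$-geodesics promotes the a.e.\ gradient bound to a $d_{\partial\Omega_k}$-Lipschitz bound with constant $M_k$.

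\emph{Transport to $\partial\Omega$ and passage to the limit.} For $X,Y\in\partial\Omega$ set $X_k:=\pi_{\Omega_k}^{-1}(\pi_\Omega(X))$ and $Y_k:=\pi_{\Omega_k}^{-1}(\pi_\Omega(Y))$. By Lemma~\ref{lem: rad proj push} both $\pi_\Omega$ and $\pi_{\Omega_k}^{-1}$ are $L_{\pi_\Omega}$-Lipschitz in the intrinsic distances (uniformly in $k$), so $d_{\partial\Omega_k}(X_k,Y_k)\le L_{\pi_\Omega}^2\,d_{\partial\Omega}(X,Y)$ and the previous step gives
\[
|u_k(X_k)-u_k(Y_k)|\le M_k L_{\pi_\Omega}^2\,d_{\partial\Omega}(X,Y).
\]
Since $X_k\to X$, $Y_k\to Y$ (by the argument in the proof of Lemma~\ref{lem: measures weakly converge}) and $u_k\to u$ uniformly on $B^{n+1}_{\outrad}(0)$, the left-hand side converges to $|u(X)-u(Y)|$. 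Using \eqref{eqn: W2 approximation bound} to estimate $\W_2(\mu_k,\bar\mu_k)\le L_{\pi_\Omega}\W_2(\mu,\bar\mu)$ and absorbing the residual $L_{\pi_\Omega}^{2/(n+2)}\le L_{\pi_\Omega}$ factor into $C_{\rho_0,\Omega}$ (using $L_{\pi_\Omega}\ge 1$), taking $k\to\infty$ yields the claimed bound.

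\emph{Main obstacle.} The crux is justifying the hypothesis $\bar X\in\partial\Omega_k^+(X)$ of Proposition~\ref{prop: stay away}: this is exactly what \cite{GangboMcCann00} furnishes on the uniformly convex $\Omega_k$ but which cannot be ensured directly on $\partial\Omega$. The whole scheme is viable only because the constants appearing in Proposition~\ref{prop: stay away} (and in Lemmas~\ref{lem: interior cone}, \ref{lem: unif size nbhd lip beta}, \ref{lem: rad proj push}) depend only on $C^1$ data of $\Omega$ — the modulus of continuity $\omega_\Omega$ of the outer normal and the diameter — quantities which Lemma~\ref{lem: hausdorff stable} shows are stable along Hausdorff-approximating sequences. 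Had they involved curvatures or any other $C^2$ data, they would blow up along $\{\Omega_k\}$ and the estimate would not survive the limit.
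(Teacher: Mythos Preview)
Your approach is correct and reaches the goal, but it differs from the paper's in where the limit is taken. The paper fixes a point $X\in\partial\Omega$ of tangential differentiability of $u$, picks \emph{any} sequence $X_k\in\partial\Omega_k$ with $X_k\to X$, applies Gangbo--McCann and Proposition~\ref{prop: stay away} to the single pair $(X_k,T_k^+(X_k))$, and then passes to the limit \emph{pointwise} using Rockafellar's stability of subdifferentials \cite[Theorem~24.5]{Rockafellar70} to conclude $\Xbar_\infty\in\partial\tilde u(X)$ and hence $|\nabla^\Omega u(X)|\le C_{\rho_0,\Omega}L_{\pi_\Omega}^2\W_2(\mu,\mubar)^{2/(n+2)}$; only afterwards does it upgrade the a.e.\ gradient bound to a $d_{\partial\Omega}$-Lipschitz bound via a coordinate-chart and path-decomposition argument. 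You instead establish a global $d_{\partial\Omega_k}$-Lipschitz bound on each approximant, pull it back through the radial projections, and pass to the limit at the level of the Lipschitz inequality itself. Your route is arguably cleaner in that it sidesteps Rockafellar's theorem entirely, but it pays a price in the constant: the metric change $d_{\partial\Omega_k}(X_k,Y_k)\le L_{\pi_\Omega}^2\,d_{\partial\Omega}(X,Y)$ introduces an extra $L_{\pi_\Omega}^2$ factor, so you end up with $C_{\rho_0,\Omega}L_{\pi_\Omega}^{2+2/(n+2)}$ rather than the stated $C_{\rho_0,\Omega}L_{\pi_\Omega}^2$. Your ``absorb into $C_{\rho_0,\Omega}$'' is not quite legitimate since $C_{\rho_0,\Omega}$ is the specific constant from Proposition~\ref{prop: stay away}; however, since all factors depend only on $\Omega$ and $\rho_0$ and the lemma is used only to verify smallness conditions downstream, this discrepancy is harmless for the applications. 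One further cosmetic point: Gangbo--McCann does not force the Kantorovich plan itself to concentrate on same-side pairs; it only guarantees a same-side selection $T_k^+$ inside $\partial_c^{\Omega_k}u_k$, which is all you actually use.
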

\begin{proof}
Let us write
\begin{align*}
    \tilde{u}_k:&=u_k+\frac{\lvert \cdot\rvert^2}{2},\qquad
    \tilde{u}:=u+\frac{\lvert \cdot\rvert^2}{2},
\end{align*}
which are convex by Remark~\ref{rmk: subdifferentials}. 
By \cite[Lemma 1.6]{GangboMcCann00}, $u$ is tangentially differentiable for $\mathcal{H}^n$-a.e. $X\in \partial \Omega$ and thus for such $X$, the subdifferential $\subdiff{\tilde u}{X}$ consists of a line segment parallel to $\normal{\Omega}{X}$. Fix such a point $X\in \partial \Omega$ of tangential differentiability, then by Hausdorff convergence there exists a sequence $X_k\in \partial \Omega_k$ such that $\lvert X_k-X\rvert\to 0$ as $k\to\infty$; we also see
\begin{align}\label{eqn: projection is tangential gradient}
    \{\nabla^{\Omega} u(X)+X\}=\proj^{\Omega}_{X}(\subdiff{\tilde u}{X}).
\end{align}
 Possibly passing to a subsequence, we may assume that $\normal{\Omega_k}{X_k}$ converges to some unit vector $V_\infty\in \R^{n+1}$. Let $Z\in \Omega$ be arbitrary, then there is a sequence $Z_k\in \Omega_k$ with $\lvert Z_k-Z\rvert\to 0$ as $k\to\infty$. Then 
\begin{align*}
    \inner{Z-X}{V_\infty}=\lim_{k\to\infty}\inner{Z_k-X_k}{\normal{\Omega_k}{X_k}}\leq 0,
\end{align*}
thus $V_\infty$ is a unit outward normal to $\Omega$ at $X$. Since $\partial\Omega$ is $C^1$, this means
\begin{align}\label{eqn: normals converge}
    \lim_{k\to\infty}\normal{\Omega_k}{X_k}=\normal{\Omega}{X}.
\end{align}

Since each $\Omega_k$ is uniformly convex, by Lemma~\ref{lem: rad proj push} for each $k$, the pair $\mu_k$ and $\mubar_k$ are ``suitable measures'' in the sense of \cite[Definition 3.1]{GangboMcCann00}. Thus by \cite[Theorem 3.8]{GangboMcCann00} followed by \cite[Theorem 5.10 (iii)]{Villani09}, there exists a map $T_k^+: \partial \Omega_k\to \partial \Omega_k$ satisfying 
\begin{align*}
    \inner{X}{T_k^+(X)}&\geq 0,\quad\forall X\in \partial\Omega_k,\\
    \{(X, T_k^+(X))\mid X\in \partial \Omega_k\}&\subset \spt \gamma_k\subset \partial_c^{\Omega_k} u_k=\partial \tilde{u}_k\cap (\partial \Omega_k\times \partial \Omega_k),
\end{align*}
then Proposition~\ref{prop: stay away} combined with \eqref{eqn: W2 approximation bound} implies that (with $\rho_0$ as in \eqref{eqn: density lower bound}) $\lvert \proj^{\Omega_k}_{X_k}(T_k^+(X_k)-X_k)\rvert\leq C_{\rho_0, \Omega}L_{\pi_{\Omega}}^2\W_2(\mu, \mubar)^{\frac{2}{n+2}}$. Passing to another subsequence, we may assume $T_k^+(X_k)$ converges to some  $\Xbar_\infty\in \partial \Omega$, then \eqref{eqn: normals converge} yields
\begin{align}\label{eqn: approx gradient bound}
    \lvert \proj^{\Omega}_{X}(\Xbar_\infty-X)\rvert\leq C_{\rho_0, \Omega}L_{\pi_{\Omega}}^2\W_2(\mu, \mubar)^{\frac{2}{n+2}}.
\end{align}

By \cite[Theorem 24.5]{Rockafellar70}, for any $\epsilon>0$ there is $K_\epsilon$ such that for all $k\geq K_\epsilon$, we have $\distop(T_k(X_k), \subdiff{\tilde u}{X})<\epsilon$, then since $\subdiff{\tilde u}{X}$ is closed, we have $\Xbar_\infty\in \subdiff{\tilde u}{X}$. 
Hence combining with \eqref{eqn: projection is tangential gradient} and \eqref{eqn: approx gradient bound} gives
\begin{align}\label{eqn: limit gradient bound}
    \lvert \nabla^\Omega u(X)\rvert\leq C_{\rho_0, \Omega}L_{\pi_{\Omega}}^2\W_2(\mu, \mubar)^{\frac{2}{n+2}},\qquad \mathcal{H}^n\textnormal{-}a.e.\ X\in \partial \Omega.
\end{align}
Now for $X_0\in \partial \Omega$, we can make a rotation so that $\normal{\Omega}{X_0}=e_{n+1}$ and $\Omega_{X_0}\subset \R^n$, then viewing a portion of $\partial \Omega$ as the graph of $\beta_{X_0}$ over $\Omega_{X_0}$ gives a $C^1$-coordinate chart. Let us write $x_0=\proj^\Omega_{X_0}(X_0)$, then using Lemma~\ref{lem: unif size nbhd lip beta} for $x_1$, $x_2\in B^n_{\frac{\conerad(\frac{1}{2})}{4}}(x_0)$ we see
\begin{align}\label{eqn: bilipschitz}
    \lvert x_1-x_2\rvert&\leq \sqrt{\lvert x_1-x_2\rvert^2+(\beta_{X_0}(x_1)-\beta_{X_0}(x_2))^2}
    \leq \frac{\sqrt{\conerad(\frac{1}{2})^2+16\diam(\Omega)^2}}{\conerad(\frac{1}{2})} \lvert x_1-x_2\rvert.
\end{align}
In these coordinates, we can also calculate the Riemannian metric tensor on $\partial \Omega$ as
\begin{align*}
    g_{ij}(x)=\delta_{ij}+\partial_{x_i}\beta_{X_0}(x)\partial_{x_j}\beta_{X_0}(x).
\end{align*}
Now defining 
\begin{align*}
    U_{X_0}(x):=u(x, \beta_{X_0}(x)),
\end{align*}
by \eqref{eqn: bilipschitz} we see $U_{X_0}$ is differentiable for a.e. $x\in B^n_{\frac{\conerad(\frac{1}{2})}{4}}(x_0)$. For such $x$, we have
\begin{align*}
    \lvert \nabla^n U_{X_0}(x)\rvert&\leq \sqrt{\lvert \nabla^n U_{X_0}(x)\rvert^2+\inner{\nabla^n\beta_{X_0}(x)}{ \nabla^n U_{X_0}(x)}^2}=\lvert \nabla^\Omega u(x, \beta_{X_0}(x))\rvert\\
    &\leq C_{\rho_0, \Omega}L_{\pi_{\Omega}}^2\W_2(\mu, \mubar)^{\frac{2}{n+2}}
\end{align*}
by \eqref{eqn: limit gradient bound}. Since $u$ is uniformly Lipschitz on $\partial \Omega$ and \eqref{eqn: bilipschitz} holds, $U_{X_0}$ is Lipschitz on $B^n_{\frac{\conerad(\frac{1}{2})}{4}}(x_0)$, hence it belongs to $W^{1, \infty}(B^n_{\frac{\conerad(\frac{1}{2})}{4}}(x_0))$ (see \cite[Section 5.8, Theorem 4]{Evans10}). By \cite[Section 5.8, Theorem 5]{Evans10} and  \cite[Section 5.8, Proof of Theorem 4]{Evans10} combined with \eqref{eqn: bilipschitz}, we find that
\begin{align}\label{eqn: coordinate Lipschitz bound}
    &\lvert U_{X_0}(x_1)-U_{X_0}(x_2)\rvert \leq C_{\rho_0, \Omega}L_{\pi_{\Omega}}^2\W_2(\mu, \mubar)^{\frac{2}{n+2}}\lvert x_1-x_2\rvert\notag\\
    &\leq C_{\rho_0, \Omega}L_{\pi_{\Omega}}^2\W_2(\mu, \mubar)^{\frac{2}{n+2}}d_{\partial\Omega}((x_1, \beta_{X_0}(x_1)), (x_2, \beta_{X_0}(x_2))),\quad\forall x_1, x_2\in B^n_{\frac{\conerad(\frac{1}{2})}{4}}(x_0).
\end{align}
Now, fix $X_1, X_2 \in \partial \Omega$, since $d_{\partial\Omega}$ is intrinsic and $\partial\Omega$ is compact, by \cite[Section 2.5]{BuragoBuragoIvanov01} there exists a curve $\gamma: [0, d_{\partial\Omega}(X_1, X_2)]\to \partial\Omega$ such that
\begin{align*}
    d_{\partial\Omega}(\gamma(t), \gamma(s))=\abs{t-s},\qquad \forall 0\leq s\leq t\leq d_{\partial\Omega}(X_1, X_2).
\end{align*}
Let us write
\begin{align*}
    \tau:=\frac{\conerad(\frac{1}{2})}{4C_{\partial\Omega}},\qquad d_{12}:=d_{\partial\Omega}(X_1, X_2),\qquad N:=\lfloor\frac{d_{12}}{\tau}\rfloor,
\end{align*}
then for $i=1, \ldots, N$, by \eqref{eqn: geodesic distance bound}, we have $\gamma([(i-1)\tau, i\tau]) \subset B^{n+1}_{\frac{\conerad(\frac{1}{2})}{4}}(\gamma((i-1)\tau))$ and $\gamma([N\tau, d_{12}]) \subset B^{n+1}_{\frac{\conerad(\frac{1}{2})}{4}}(\gamma(N\tau))$. Thus by \eqref{eqn: coordinate Lipschitz bound},
\begin{align*}
    \abs{u(X_1)-u(X_2)}&\leq \sum_{i=1}^N \abs{u(\gamma(i\tau)-u(\gamma((i-1)\tau))}+\abs{u(\gamma(d_{12})-u(\gamma(N\tau))}\\
    &\leq C_{\rho_0, \Omega}L_{\pi_\Omega}^2 \W_2(\mu, \mubar)^{\frac{2}{n+2}} \left(\sum_{i=1}^Nd_{\partial \Omega}(\gamma(i\tau),\gamma((i-1)\tau))+d_{\partial \Omega}(\gamma(N\tau), \gamma(d_{12}))\right)\\
    &= C_{\rho_0, \Omega}L_{\pi_\Omega}^2 \W_2(\mu, \mubar)^{\frac{2}{n+2}} d_{\partial \Omega}(X_1, X_2).
\end{align*}
\end{proof}
By the above estimate, when $\W(\mu, \mubar)$ is sufficiently small we will be able to ensure that the potential $u$ constructed above satisfies the condition~\eqref{eqn: section in same side as slope} for sections of sufficiently small height.
\begin{lem}\label{lem: small section in small nbhd Xbar}
Fix $\eta>0$, and let $u$ be the limiting potential constructed by approximation above. 
If 
\begin{equation}\label{eqn: W2 small bound 2}
\W_2(\mu,\mubar)^{\frac{2}{n+2}} <\max\left(\frac{\eta}{4C_{\rho_0,\Omega}}, \frac{\conerad(\frac{1}{2})}{2C_{\rho_0,\Omega}C_{\partial\Omega}L_{\pi_\Omega}^2}\right), 
\end{equation}
then there exists $h_{\eta, \Omega}>0$ depending only on $\eta$ and $\Omega$ such that for all $0<h<h_{\eta, \Omega}$, $X_0 \in \partial \Omega$, and $\Xbar_0 \in \csubdiff[\Omega]{u}{X_0}$ we have 
\begin{align*}
    S^{u}_{h, X_0, \Xbar_0} \subset  B^{n+1}_\eta(\Xbar_0) \cap \partial \Omega.
\end{align*}
\end{lem}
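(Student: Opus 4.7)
The plan is to view $S_{h,X_0,\Xbar_0}$ as a sublevel set of $f(X):=u(X)+c(X,\Xbar_0)$, and to play the Lipschitz bound on $u$ from Lemma~\ref{lem: limiting potential lip bound} against the strict quadratic minimum of $c(\cdot,\Xbar_0)$ at $\Xbar_0$ to force all points of the sublevel set to lie near $\Xbar_0$ when $h$ and $\W_2(\mu,\mubar)$ are both small. The starting observation is that the assumption $\Xbar_0\in\csubdiff[\Omega]{u}{X_0}$ unpacks directly to $f(Y)\geq f(X_0)$ for every $Y\in\partial\Omega$, so that $X_0$ is a global minimizer of $f$ on $\partial\Omega$. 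Testing this minimization at $Y=\Xbar_0\in\partial\Omega$ yields the crucial pointwise bound $f(X_0)\leq f(\Xbar_0)=u(\Xbar_0)$, since $c(\Xbar_0,\Xbar_0)=0$.

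For $X\in S_{h,X_0,\Xbar_0}$ the defining inequality then gives $f(X)\leq f(X_0)+h\leq u(\Xbar_0)+h$. On the other hand, writing $L_u := C_{\rho_0,\Omega}L_{\pi_\Omega}^2\W_2(\mu,\mubar)^{2/(n+2)}$ for the $d_{\partial\Omega}$-Lipschitz constant from Lemma~\ref{lem: limiting potential lip bound} and applying \eqref{eqn: geodesic distance bound}, one obtains the lower bound
\begin{equation*}
    f(X)\geq u(\Xbar_0)-L_u C_{\partial\Omega}|X-\Xbar_0|+\frac{|X-\Xbar_0|^2}{2}.
\end{equation*}
Setting $b:=|X-\Xbar_0|$ and combining the two inequalities produces the quadratic relation $\tfrac{b^2}{2}-L_u C_{\partial\Omega}\,b\leq h$, which upon solving gives
\begin{equation*}
    b\leq L_u C_{\partial\Omega}+\sqrt{(L_u C_{\partial\Omega})^2+2h}.
\end{equation*}

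The final step is to verify that hypothesis \eqref{eqn: W2 small bound 2} controls $L_u C_{\partial\Omega}$ in a way compatible with making the right-hand side above at most $\eta$ via a suitable choice of $h_{\eta,\Omega}>0$ depending only on $\eta$ and $\Omega$. I expect this bookkeeping to be the main obstacle of the proof: the two branches of the max in \eqref{eqn: W2 small bound 2} correspond to genuinely different control mechanisms — one in which $L_u$ is small relative to $\eta$ directly, and one in which $L_u$ is controlled only uniformly (through the threshold appearing in Proposition~\ref{prop: nonsplitting}) — and each must be shown separately to yield a valid choice of $h_{\eta,\Omega}$. In both regimes, however, a concrete prescription of the form $h_{\eta,\Omega}\sim\eta^2$, with implied constants encoding $C_{\partial\Omega}$, $C_{\rho_0,\Omega}$, and $L_{\pi_\Omega}$, will force $b\leq\eta$ and thereby establish the inclusion $S_{h,X_0,\Xbar_0}\subset B^{n+1}_\eta(\Xbar_0)\cap\partial\Omega$.
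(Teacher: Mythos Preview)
Your direct quadratic argument is clean, but the final bookkeeping does not close under the stated hypothesis. Even at $h=0$ your inequality gives $b\leq 2L_uC_{\partial\Omega}=2C_{\rho_0,\Omega}L_{\pi_\Omega}^2C_{\partial\Omega}\W_2(\mu,\mubar)^{2/(n+2)}$. Under the first branch of \eqref{eqn: W2 small bound 2} this yields only $b<\tfrac{\eta}{2}L_{\pi_\Omega}^2C_{\partial\Omega}$, and since $L_{\pi_\Omega},C_{\partial\Omega}\geq 1$ this can be much larger than $\eta$; under the second branch it yields $b<\conerad(\tfrac12)$, which is unrelated to $\eta$. So neither branch forces $b<\eta$, and no choice of $h_{\eta,\Omega}$ can rescue this --- the obstruction is already present at $h=0$. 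In particular, your claim that one branch makes ``$L_u$ small relative to $\eta$ directly'' is false: the factor $L_{\pi_\Omega}^2$ built into Lemma~\ref{lem: limiting potential lip bound} is exactly what spoils it.

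The paper avoids this loss by arguing by contradiction and compactness: if the lemma fails one extracts a limit pair $(Y_\infty,\Xbar_\infty)$ with $\Xbar_\infty\in\csubdiff[\Omega]{u}{Y_\infty}$ and $|Y_\infty-\Xbar_\infty|\geq\eta$. The second branch of \eqref{eqn: W2 small bound 2} (via Lemma~\ref{lem: limiting potential lip bound} and Proposition~\ref{prop: nonsplitting}) guarantees $\Xbar_\infty\in\partial\Omega^+_{1/2}(Y_\infty)$, and then the stay-away estimate Proposition~\ref{prop: stay away} is applied \emph{directly} to this pair, giving $|\proj^\Omega_{Y_\infty}(Y_\infty-\Xbar_\infty)|\leq C_{\rho_0,\Omega}\W_2(\mu,\mubar)^{2/(n+2)}$ with no $L_{\pi_\Omega}^2$ factor; under the first branch this is $<\eta/4$, and Lemma~\ref{lem: gradient beta est} upgrades it to $|Y_\infty-\Xbar_\infty|<\eta/2$, a contradiction. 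The point is that the first branch of the hypothesis is calibrated to Proposition~\ref{prop: stay away}, not to the Lipschitz bound of Lemma~\ref{lem: limiting potential lip bound}; your route through the latter throws away precisely the sharpness needed.
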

\begin{proof}
Suppose the lemma fails, then for $k\in \mathbb{N}$ there exist $X_k\in \partial \Omega$, $\Xbar_k\in \csubdiff[\Omega]{u}{X_k}$, $h_k\searrow 0$, and $Y_k\in S^{u}_{h_k, X_k, \Xbar_k}\cap \left(\partial \Omega\setminus B^{n+1}_\eta(\Xbar_k)\right)$. Let us write $S_k:=S^{u}_{h_k, X_k, \Xbar_k}$ and $m_k(X):= -c(X,\Xbar_k)+c(X_k, \Xbar_k) + u(X_k)$; note $m_k\leq u$ on $\partial \Omega$ for any $k$ as $\Xbar_k\in \csubdiff[\Omega]{u}{X_k}$. By compactness we may pass to subsequences (not relabeled) to assume $X_k$, $\Xbar_k$, $Y_k$ converge to $X_\infty$, $\Xbar_\infty$, $Y_\infty\in \partial \Omega$ respectively; we also write $m_\infty(X):= -c(X,\Xbar_\infty)+c(X_\infty, \Xbar_\infty) + u(X_\infty)$. 

Now since $m_k(X_k)\leq u(Y_k)\leq m_k(Y_k)+h_k$ for all $k$, by continuity we can pass to the limit to find 
\begin{align*}
    u(Y_\infty)= -c(Y_\infty, \Xbar_\infty)+c(X_\infty, \Xbar_\infty)+u(X_\infty)=m_\infty(Y_\infty).
\end{align*}
Since $m_\infty\leq u$ by continuity of $c$ and $u$, this implies $\Xbar_\infty\in \csubdiff[\Omega]{u}{Y_\infty}$.
By \eqref{eqn: W2 small bound 2} combined with Lemma~\ref{lem: limiting potential lip bound}, we see the $d_{\partial\Omega}$-Lipschitz constant of $u$ is less than $\frac{\conerad(\frac{1}{2})}{2C_{\partial\Omega}}$, hence by Proposition~\ref{prop: nonsplitting} we have $\Xbar_\infty \in \partial \Omega_{\frac{1}{2}}^+(Y_\infty)$. 
Then we can use Lemma~\ref{lem: gradient beta est} and Proposition~\ref{prop: stay away} to compute
\begin{equation*}
    \lvert Y_\infty - \Xbar_\infty \rvert \leq \sqrt{1+\sqrt{3}^2} \lvert \proj^\Omega_{Y_\infty} (Y_\infty - \Xbar_\infty) \rvert \leq \frac{\eta}{2},
\end{equation*}
however this contradicts that we must have $Y_\infty \not\in B^{n+1}_\eta(\Xbar_\infty)$, finishing the proof.
\end{proof}

\section{Proof of Main Theorems}\label{section: main proof}
We are now ready to prove our main theorems. Theorem~\ref{thm: monge solution} does not require the use of the estimates from Section~\ref{section: aleksandrov} but uses the dual potential constructed via approximating bodies from the previous section.
\begin{proof}[Proof of Theorem~\ref{thm: monge solution}]
By the smallness condition~\eqref{eqn: W2 small} on $\W_2(\mu, \mubar)$, Lemma~\ref{lem: limiting potential lip bound} implies the $d_{\partial\Omega}$-Lipschitz constant of $u$ is smaller than $\conerad(\frac{35}{36})/(16 C_{\partial\Omega})$.  Then we can apply Proposition~\ref{prop: nonsplitting} to find that  $\csubdiff[\Omega]{u}{X}\subset B^{n+1}_{\frac{\conerad(\frac{35}{36})}{8}}(X)$ for all $X\in \partial\Omega$ (that is, condition~\eqref{eqn: csub in same side} holds).

    By \cite[Lemma 1.6]{GangboMcCann00}, $u$ is tangentially differentiable at $\mathcal{H}^n$-a.e., hence $\mu$-a.e. $X\in \partial\Omega$. Fix such an $X$, then the function $U_X(y):=u(\cExp{X}{y})$ is differentiable at $0$, meaning $\locsubdiff{U_X}{0}$ is a singleton. Suppose now that $\Xbar_1$, $\Xbar_2\in \csubdiff{u}{X}$, this implies $[\Xbar_1]_X$, $[\Xbar_2]_X\in \locsubdiff{U_X}{0}$; in particular $\Xbar_2=\Xbar_1+\lambda\mathcal{N}_\Omega(X)$ for some $\lambda\in \R$. 
     From~\eqref{eqn: csub in same side} we have that $\inner{\mathcal{N}_\Omega(\Xbar_i)}{\mathcal{N}_\Omega(X)}>\frac{35}{36}$ for $i=1$, $2$, thus
     \begin{align*}
         0&\geq \inner{\Xbar_2-\Xbar_1}{\mathcal{N}_\Omega(\Xbar_1)}=\lambda\inner{\mathcal{N}_\Omega(X)}{\mathcal{N}_\Omega(\Xbar_1)},\\
         0&\geq \inner{\Xbar_1-\Xbar_2}{\mathcal{N}_\Omega(\Xbar_2)}=-\lambda\inner{\mathcal{N}_\Omega(X)}{\mathcal{N}_\Omega(\Xbar_2)},
     \end{align*}
     which implies $\lambda=0$. Thus we find $\csubdiff{u}{X}$ is a singleton.

         Now we can consider $T: \partial\Omega\to\partial\Omega$ which is single valued $\mu$-a.e. on $\partial\Omega$, defined by $\{T(X)\}=\csubdiff[\Omega]{u}{X}$. Since $c$ is continuous, $\partial_c^\Omega u$ is closed, hence by \cite[Theorem 5.10 (iii)]{Villani09} we see that \emph{any} Kantorovich solution $\gamma$ between $\mu$ and $\mubar$ satisfies $\spt\gamma\subset \partial^\Omega_cu$. This implies  $\int_{\partial\Omega\times\partial\Omega}\lvert T(X)-Y\rvert d\gamma(X,Y)=0$, thus by \cite[Lemma 2.4]{GangboMcCann00} we have $\gamma=(\Id\times T)_\#\mu$, finishing the proof.
\end{proof}

Next we work toward showing continuity of the optimal transport map $T$. For the remainder of this section, $C>0$ will denote a constant, whose value may change from line to line, which depends possibly only on $n$, $\Omega$, and the bounds on $\rho$ and $\rhobar$ away from zero and infinity (we will refer to such a constant as \emph{universal}); for two quantities $a$ and $b$ we write $a\lesssim b$ to indicate $a\leq Cb$ for a universal $C>0$ and $a\sim b$ to indicate that both $a\lesssim b$ and $b\lesssim a$. We start with a quick corollary to Theorem~\ref{thm: monge solution}.
\begin{cor}
Under the same hypotheses as Theorem~\ref{thm: monge solution}, for any Borel $\hat{A}\subset \partial \Omega$, 
\begin{align}\label{eqn: csubdiff comparable}
    \mathcal{H}^n(\csubdiff{u}{\hat{A}})\sim \mathcal{H}^n(\hat{A}).
\end{align}
\end{cor}
\begin{proof}
By Theorem~\ref{thm: monge solution} and the same theorem with $\mu$ and $\mubar$ reversed, we find a $\mu$-null set $\mathcal{N}$ such that $\csubdiff{u}{X}$ is a singleton for all $X\in \partial\Omega\setminus\mathcal{N}$, and a $\mubar$-null set $\bar{\mathcal{N}}$ such that $\csubdiff{u^c}{\Xbar}$ is a singleton for all $\Xbar\in \partial\Omega\setminus\bar{\mathcal{N}}$. Suppose we fix any Borel $\hat{A}\subset \partial \Omega$. If $X\in T^{-1}(\csubdiff{u}{\hat{A}}\setminus\bar{\mathcal{N}})$, there exists some $\Xbar\in \csubdiff{u}{\hat{A}}\setminus \bar{\mathcal{N}}$ such that $\csubdiff{u}{X}=\{\Xbar\}$, or equivalently $X\in \csubdiff{u^c}{\Xbar}$. This also implies there exists $\hat{X}\in \hat{A}$ such that $\Xbar\in \csubdiff{u}{\hat{X}}$, equivalently $\hat{X}\in \csubdiff{u^c}{\Xbar}$, then by the choice of $\bar{\mathcal{N}}$ this implies $\hat{X}=X$. In particular $X\in \hat{A}$, hence we find
\begin{align*}
    \mathcal{H}^n(\hat{A})
   &\sim \mu(\hat{A})
    \geq\mu(T^{-1}(\csubdiff{u}{\hat{A}}\setminus\bar{\mathcal{N}}))
    =\mubar(\csubdiff{u}{\hat{A}}\setminus\bar{\mathcal{N}})
    \sim \mathcal{H}^n(\csubdiff{u}{\hat{A}}).
\end{align*}
Since $\hat{A}\setminus\mathcal{N}\subset T^{-1}(\csubdiff{u}{\hat{A}})$ trivially, we can also see 
\begin{align*}
  \mathcal{H}^n(\csubdiff{u}{\hat{A}})\sim \mubar(\csubdiff{u}{\hat{A}})
    =\mu(T^{-1}(\csubdiff{u}{\hat{A}}))
    \geq\mu(\hat{A}\setminus\mathcal{N})\sim \mathcal{H}^n(\hat{A}).
\end{align*}
\end{proof}
Next we will use the Aleksandrov estimates from Section~\ref{section: aleksandrov} to prove Theorem~\ref{thm: nonstrictly convex body}.  The proof follows the method of \cite[Section 5]{GuillenKitagawa15} (see also \cite[Section 7]{FigalliKimMcCann13}). 
%%%%%%%%%
Roughly speaking, we first show that Theorems~\ref{thm: lower aleksandrov} and~\ref{thm: upper aleksandrov} can be applied to sections with sufficiently small heights of the (unique up to addition of constants) dual Kantorovich potential. Next, we follow the proof of the Caffarelli localization theorem in the interior of $\spt \mu$, \cite[Theorem 5.7]{GuillenKitagawa15}, with the lower and upper Aleksandrov estimates replaced by Theorems~\ref{thm: lower aleksandrov} and~\ref{thm: upper aleksandrov} respectively, this will imply all contact sets between $u$ and its supporting $c$-functions are singletons. Finally, noting that the roles of $\mu$ and $\mubar$ are symmetric, we obtain the same result for contact sets of $u^c$, which imply that all sets of the form $\csubdiff{u}{X}$ are singletons. This allows us to show the transport map is both single valued everywhere and continuous. 

\begin{proof}[Proof of Theorem~\ref{thm: nonstrictly convex body}]
Apply the approximation process in Remark~\ref{rmk: limiting potential} to obtain a pair $(u, u^c)$ which maximizes the dual problem between $\mu$ and $\mubar$ where $u$ is $c$-convex. By Theorem~\ref{thm: monge solution}, the map $T$  defined by $\csubdiff{u}{X}=\{T(X)\}$ is singled valued $\mu$-a.e. on $\partial\Omega$. Thus by the continuity of $u$ and $c$, it is sufficient to show that $\csubdiff{u}{X}$ is a singleton for \emph{every} $X\in \partial\Omega$.
    
    We first verify that conditions~\eqref{eqn: lower est section in same side},~\eqref{eqn: lower est section in center ball}, and~\eqref{eqn: lower est csub in same side} in Theorem~\ref{thm: lower aleksandrov} and conditions~\eqref{eqn: csub in same side} and~\eqref{eqn: section in same side as slope} in Theorem~\ref{thm: upper aleksandrov} hold for appropriate sections of $u$ and choices of sets $A$. We have already verified that~\eqref{eqn: csub in same side} holds in the proof of Theorem~\ref{thm: monge solution} above.  
    By~\eqref{eqn: W2 small}, we find the condition~\eqref{eqn: W2 small bound 2} holds with $\eta=\conerad(\frac{35}{36})/8$, then by Lemma~\ref{lem: small section in small nbhd Xbar} (see also Remark~\ref{rmk: section equivalence}), there is an $h_0>0$ depending only on $\Omega$ such that for any $X\in \partial \Omega$, $\Xbar\in \csubdiff{u}{X}$, and $0<h<h_0$, the section $S=S^u_{h, X, \Xbar}$ satisfies \eqref{eqn: section in same side as slope}. In turn, by Lemma~\ref{lem: interior cone} we see \eqref{eqn: lower est section in same side} holds with $\Theta=\frac{35}{36}$. 
    %%%%%%%%%%%%
    Now for each such section $S$, let $E$ be the John ellipsoid (recall~\eqref{eqn: john ellipsoid}) associated to 
    $\proj_{\Xbar}^\Omega(S)$; by \eqref{eqn: lower est section in same side} we see the center of $E$ is given by $\proj_{\Xbar}^\Omega(X_{cm})$ for some $X_{cm}\in \partial\Omega^+(\Xbar)$. Let $A:=(\proj_{\Xbar}^\Omega)^{-1}(\frac{1}{2}E)\cap\partial\Omega^+(\Xbar)$, where the dilation is with respect to $\proj_{\Xbar}^\Omega(X_{cm})$, clearly $2\proj_{\Xbar}^\Omega(A)\subset \proj_{\Xbar}^\Omega(S)$. 
    %%%%%%%%%%%%%%%%%%%%%%
    Finally for any $Y\in S^u_{h, X, \Xbar}$ and $\Ybar\in \csubdiff{u}{Y}$, by~\eqref{eqn: section in same side as slope} we have 
    \begin{align*}
    \lvert Y-X_{cm}\rvert&\leq \lvert Y-\Xbar\rvert+\lvert \Xbar-X_{cm}\rvert<\frac{\conerad(\frac{35}{36})}{4}\leq \frac{\conerad(\frac{1}{2})}{4}
    \end{align*}
    so~\eqref{eqn: lower est section in center ball} holds, while also using~\eqref{eqn: csub in same side} yields
    \begin{align*}
    \lvert \Ybar-X_{cm}\rvert&\leq \lvert\Ybar-Y\rvert+\lvert Y-\Xbar\rvert+\lvert \Xbar-X_{cm}\rvert<\frac{3\conerad(\frac{35}{36})}{8} ,   
    \end{align*}
    hence applying Lemma~\ref{lem: interior cone} we have that~\eqref{eqn: lower est csub in same side} also holds.

    Let us now fix $X_0\in \partial\Omega$ and $\Xbar_0\in \csubdiff{u}{X_0}$, and define 
    \begin{align*}
    m_0(X):&=-c(X, \Xbar_0)+c(X_0, \Xbar_0)+u(X_0),\\
        S_0:&=S^u_{0, X_0, \Xbar_0}=\{X\in \partial\Omega\mid u(X)=m_0(X)\};
    \end{align*}
    note by definition, $m_0\leq u$ everywhere. We will show that $S_0=\{X_0\}$; suppose this does not hold, then the set $\proj_{\Xbar_0}^\Omega(S_0)$ is convex, compact, and not a singleton, hence contains at least one exposed point which we call $x_e$. Then (making a rotation to identify the tangent plane to $\partial\Omega$ at $\Xbar_0$ with $\R^n$) there exists a unit vector $v_0\in \R^n$ such that
    \begin{align*}
        \inner{x-x_e}{v_0}\leq 0,\quad \forall x\in \proj_{\Xbar_0}^\Omega(S_0)
    \end{align*}
    and the inequality is strict whenever $x\neq x_e$; by \cite[Lemma 7.4]{GuillenKitagawa15} we may assume there exists a $\tau_0>0$ such that $x_e-\tau v_0\in \proj_{\Xbar_0}^\Omega(S_0)$ for all $\tau\in [0, \tau_0]$. We now construct a family of sections which ``chop off'' a small portion of $S_0$, while decaying in a specified manner. By~\eqref{eqn: lower est section in same side} the following are well-defined for $t>0$ sufficiently small, depending only on $\Xbar_0$:
    \begin{align*}
    \xbar_0:&=\proj_{\Xbar_0}^\Omega(\Xbar_0),\qquad
    \Xbar_t:=(\xbar_0+tv_0, \beta_{\Xbar_0}(\xbar_0+tv_0)),\\
        X_e:&=(x_e, \beta_{\Xbar_0}(x_e)).
    \end{align*}
    Then we can define
    \begin{align*}
        m_t(X):&=-c(X, \Xbar_t)+c(X_e, \Xbar_t)+m_0(X_e)+t,\\
        S_t:&=\{X\in \partial \Omega\mid u(X)\leq m_t(X)\}.
    \end{align*}
    By a compactness argument, for all $t>0$ sufficiently small, we see $S_t$ is contained in a neighborhood (in $\R^{n+1}$) of size $\conerad(\frac{35}{36})/8$ around $S_0$. Then by \eqref{eqn: section in same side as slope}, we have that $S_t\subset B^{n+1}_{\conerad(\frac{35}{36})}(\Xbar_0)$, thus Lemma~\ref{lem: interior cone} implies that $S_t\subset \partial\Omega^+(\Xbar_0)$. 
    Since $m_0\leq u$, we then find that for any $X\in S_t$ (writing $x:=\proj^\Omega_{\Xbar_0}(X)$),
    \begin{align*}
        m_t(X)-u(X)
        &\leq m_t(X)-m_0(X)
        =-c(X, \Xbar_t)+c(X_e, \Xbar_t)-(m_0(X)-m_0(X_e))+t\\
        &=-c(X, \Xbar_t)+c(X_e, \Xbar_t)-(-c(X, \Xbar_0)+c(X_e, \Xbar_0))+t\\
        &=t\left[\left.\frac{d}{ds}\left(-c(X, \Xbar_s)+c(X_e, \Xbar_s)\right)\right\vert_{s=0}+1\right]+o(t),\quad t\searrow 0,\\
        &=t\left(\inner{x-x_e}{v_0}+(\beta_{\Xbar_0}(x)-\beta_{\Xbar_0}(x_e))\inner{\nabla^n \beta_{\Xbar_0}(\xbar_0)}{v_0}+1\right)+o(t),\quad t\searrow 0,\\
        &\leq t\left(\diam(\Omega)+2\diam(\Omega)\abs{\nabla^n \beta_{\Xbar_0}(\xbar_0)}+1\right)+o(t),\quad t\searrow 0.
    \end{align*}
    Since $\abs{\nabla^n \beta_{\Xbar_0}(\xbar_0)}$ is bounded by a universal constant by Lemma~\ref{lem: gradient beta est} (clearly $\Xbar_0\in \partial \Omega^+_{1/2}(\Xbar_0)$), we can see that 
    \begin{align}\label{eqn: tilted section sup}
        0\leq \sup_{S_t}(m_t-u)\leq Ct+o(t).
    \end{align}
    At the same time, by condition~\eqref{eqn: lower est section in same side} and since $x_e\in \partial^n\proj_{\Xbar_0}^\Omega(S_0)$, we find that $m_0(X_e)=u(X_e)$, hence for any $t>0$ we have
    \begin{align}\label{eqn: tilted strictly bigger at exposed}
        m_t(X_e)=m_0(X_e)+t>u(X_e).
    \end{align}
    We can combine this with the fact that $u\equiv m_t$ on the relative boundary of $S_t$ to find that for any $t>0$, there is a translate $\hat{m}_t$ of $m_t$ by a constant such that $u-\hat{m}_t$ has a local minimum relative to $\partial \Omega$ at some $X_t\in \partial\Omega$. In turn this implies that the function $\hat{U}_t:=u\circ \exp^c_{X_t}$ satisfies 
    \begin{align*}
        [\Xbar_t]_{X_t}=-\nabla^\Omega_Xc(X, \Xbar_t)\vert_{X=X_t}\in \locsubdiff{\hat{U}_t}{0},
    \end{align*}
    thus by the ``local-to-global'' result Corollary~\ref{cor: local to global}, we see $\Xbar_t\in \csubdiff{u}{X_t}$. By taking $t>0$ sufficiently small, from~\eqref{eqn: tilted section sup} we can see that $S_t=S^u_{h_t, X_t, \Xbar_t}$
    where $0<h_t<h_0$. Thus defining $A_t:=(\proj_{\Xbar_t}^\Omega)^{-1}(\frac{1}{2}E_t)\cap\partial\Omega^+(\Xbar_t)$ where $E_t$ is the John ellipsoid associated to $\proj_{\Xbar_t}^\Omega(S_t)$, we now have that $A_t$ and $S_t$ satisfies all of the hypotheses of Theorems~\ref{thm: lower aleksandrov} and~\ref{thm: upper aleksandrov}. Note by condition~\eqref{eqn: section in same side as slope}, we can apply Lemma~\ref{lem: gradient beta est} to see that $S_t$ and $A_t$ are images of each other under maps whose Lipschitz constants are universal, thus 
    \begin{align}\label{eqn: section volume comparable}
        \mathcal{H}^n(A_t)\sim \mathcal{H}^n(S_t).
    \end{align}
    We must now choose appropriate directions to apply the upper estimate Theorem~\ref{thm: upper aleksandrov}. Let 
    \begin{align*}
        \hat{X}:=(x_e-\tau v_0, \beta_{\Xbar_0}(x_e-\tau v_0))
    \end{align*}
    where $\tau\in (0, \tau_0)$ is to be determined, and small enough that $\beta_{\Xbar_0}(x_e-\tau v_0)$ is defined.
    By the choice of $v_0$ we have $\hat{X}\in S_0$, then by a calculation similar to the one leading to~\eqref{eqn: tilted section sup} we have
    \begin{align*}
        m_t(\hat{X})-u(\hat{X})
        &=m_t(\hat{X})-m_0(\hat{X})\\
        &=t\left(\inner{-\tau v_0}{v_0}+(\beta_{\Xbar_0}(x_e-\tau v_0)-\beta_{\Xbar_0}(x_e))\inner{\nabla^n \beta_{\Xbar_0}(\xbar_0)}{v_0}+1\right)+o(t),\quad t\searrow 0,\\
        &\geq t\left(-\tau-\tau\sup_{0\leq s\leq \tau}\abs{\nabla^n\beta_{\Xbar_0}(x_e-s v_0)}\abs{\nabla^n \beta_{\Xbar_0}(\xbar_0)}+1\right)+o(t),\quad t\searrow 0.
    \end{align*}
    Since $\tau\leq \tau_0$ and $\proj^\Omega_{\Xbar_0}(S_0)$ is convex, by~\eqref{eqn: lower est section in same side} we have $(x_e-s v_0, \beta_{\Xbar_0}(x_e-s v_0))\in \partial\Omega^+_{\frac{35}{36}}(\Xbar_0)$ for all $s\in [0, \tau]$, thus by Lemma~\ref{lem: gradient beta est} we see that for a fixed $\tau$ small enough, for any $t$ sufficiently small the last expression in the inequality above is nonnegative, meaning $\hat{X}\in S_t$. By~\eqref{eqn: tilted strictly bigger at exposed} we also have $X_e\in S_t$, hence we may define
    \begin{align*}
        v_t:=\frac{\proj^\Omega_{\Xbar_t}(\hat{X})-\proj^\Omega_{\Xbar_t}(X_e)}{\abs{\proj^\Omega_{\Xbar_t}(\hat{X})-\proj^\Omega_{\Xbar_t}(X_e)}}.
    \end{align*}
    Note since $\mathcal{N}_\Omega$ is continuous on $\partial \Omega$, for all $t>0$ sufficiently small we obtain that  $\proj^\Omega_{\Xbar_t}(\hat{X})\neq \proj^\Omega_{\Xbar_t}(X_e)$, and we can also estimate
    \begin{align}\label{eqn: segment uniformly bounded below}
        l([S_t]_{\Xbar_t}, v_t)
        &\geq \abs{\proj^\Omega_{\Xbar_t}(\hat{X})-\proj^\Omega_{\Xbar_t}(X_e)}\geq \frac{1}{2}\abs{\proj^\Omega_{\Xbar_0}(\hat{X})-\proj^\Omega_{\Xbar_0}(X_e)}=\frac{\tau}{2}.
    \end{align}
    We also claim
    \begin{align}\label{eqn: distance to plane decays}
        \lim_{t\searrow 0}\distop([X_e]_{\Xbar_t}, \Pi^{v_t}_{[S_t]_{\Xbar_t}})=0.
    \end{align}
    If this were not the case, by \cite[Remark 4.4] {GuillenKitagawa15} there exists some $\delta_0>0$, sequence $t_k\searrow 0$, and $x_k\in \partial\proj^\Omega_{\Xbar_{t_k}}(S_{t_k})$ such that for all $k$,
    \begin{align*}
        \delta_0&\leq\distop([X_e]_{\Xbar_{t_k}}, \Pi^{v_{t_k}}_{[S_{t_k}]_{\Xbar_{t_k}}})
        =\distop(\proj^\Omega_{\Xbar_{t_k}}(X_e), \Pi^{v_{t_k}}_{\proj^\Omega_{\Xbar_{t_k}}(S_{t_k})})\\
        &=\inner{x_k-\proj^\Omega_{\Xbar_{t_k}}(X_e)}{v_{t_k}}.
    \end{align*}
    By compactness of $\partial\Omega$, we can pass to a subsequence to assume $X_k:=(x_k, \beta_{\Xbar_{t_k}}(x_k))$ converges to some $X_\infty\in \partial \Omega$, by continuity of $u$ and $c$ we have $X_\infty\in S_0$. However, this would imply $0<\delta_0\leq \inner{\proj^\Omega_{\Xbar_0}(X_\infty)-x_e}{v_0}$, contradicting the choice of $v_0$, thus~\eqref{eqn: distance to plane decays} must hold.

    Finally, we may apply Theorems~\ref{thm: lower aleksandrov} and~\ref{thm: upper aleksandrov} to each $S_t$ and $A_t$, then use \eqref{eqn: csubdiff comparable}, \eqref{eqn: tilted section sup}, \eqref{eqn: section volume comparable}, \eqref{eqn: segment uniformly bounded below}, and \eqref{eqn: distance to plane decays} to obtain as $t\searrow 0$,
    \begin{align*}
        \frac{(m_t(X_e)-u(X_e))^n}{\sup_{S_t}(m_t-u)^n}
        &=\left(\frac{t}{\sup_{S_t}(m_t-u)}\right)^n
        \geq \frac{1}{(C+\frac{o(t)}{t})^n}\to C^{-n}>0,\\
        \frac{(m_t(X_e)-u(X_e))^n}{\sup_{S_t}(m_t-u)^n}&\leq \frac{C\mathcal{H}^n(S_t)\mathcal{H}^n(\csubdiff{u}{S_t})\distop([S_t]_{\Xbar_t}, \Pi^{v_t}_{[S_t]_{\Xbar_t}})}{\mathcal{H}^n(A_t)\mathcal{H}^n(\csubdiff{u}{A_t})l([S_t]_{\Xbar_t}, v_t)}\to 0,
    \end{align*}
    which is a contradiction, thus $S_0=\{X_0\}$.

    Now since $W_2(\mubar, \mu)=W_2(\mu, \mubar)$ and $c(X, \Xbar)=c(\Xbar, X)$, we may apply the above proof to sections of $u^c$ instead of $u$ and recall Remark~\ref{rmk: contact set is dual c-sub} to obtain that for any $X$, $\csubdiff{u}{X}$ is a single point, which finishes the proof.
    
\end{proof}
Finally, we can use the method in \cite[Section 8]{GuillenKitagawa17} based on the theory of Forzani and Maldonado in \cite{ForzaniMaldonado04}, this was also adapted by Figalli, Kim, and McCann in \cite{FigalliKimMcCann13}. Since the proof follows what is presented in \cite[Section 8]{GuillenKitagawa17} rather closely, we mainly present some of the key steps and explain differences in notation.
\begin{proof}[Proof of Theorem~\ref{thm: Holder regularity}]
Again let $u$ be the $c$-convex potential obtained from Remark~\ref{rmk: limiting potential}. We will follow the sequence of proofs in \cite[Section 8]{GuillenKitagawa17}, with the choices $G(X, \Xbar, z):=-c(X, \Xbar)-z$ and $H(X, \Xbar, v):-c(X, \Xbar)-v$. Note that $S(X, \Xbar, h)$ is the equivalent of our $S^u_{h, X, \Xbar}$ (where always, $\Xbar\in \csubdiff{u}{X})$, with associated function $m_h(\cdot)=-c(\cdot, \Xbar)+c(X, \Xbar)+h$, $\partial_Gu=\partial_cu$, $p_{\Xbar, z}(X)=[X]_{\Xbar}$ (the variable $z$ can be ignored), and the phrase ``\emph{very nice}'' can be replaced by ``universal.''

First we can replace \cite[Theorem 2.1 and Theorem 2.2]{GuillenKitagawa17} by Theorem~\ref{thm: upper aleksandrov} and Theorem~\ref{thm: lower aleksandrov} respectively, and follow the proofs of \cite[Proposition 8.1 and Lemma 8.2]{GuillenKitagawa17}\footnote{We remark that all instances of $x$ and $\bar{x}$ in the statement and proof of \cite[Lemma 8.2]{GuillenKitagawa17} should actually be $x_0$ and $\bar{x}_0$} to see there exist universal constants $\kappa\in (0, 1)$ and $h_0>0$ such that for any $X_0\in \partial\Omega$ and $\Xbar_0\in \csubdiff{u}{X_0}$, 
\begin{align}\label{eqn: double height dilation}
    \proj^\Omega_{\Xbar_0}(S^u_{h, X_0, \Xbar_0})\subset \kappa \proj^\Omega_{\Xbar_0}(S^u_{2h, X_0, \Xbar_0}),
\end{align}
where the above dilation is with respect to $x_0:=\proj^\Omega_{\Xbar_0}(X_0)$. 
 This allows us to obtain the crucial \emph{engulfing property} as in \cite[Lemma 8.3]{GuillenKitagawa17}: specifically we can show that for any $X_0\in \partial\Omega$, $\Xbar_0\in \csubdiff{u}{X_0}$, $X_1\in S^u_{h, X_0, \Xbar_0}$ with $0<h<h_0$, and $\Xbar_1\in \csubdiff{u}{X_1}$, we have
\begin{align*}
    X_0\in S^u_{2\kappa h, X_1, \Xbar_1}.
\end{align*}
Indeed, suppose $h_0$ is small enough that the conditions for Theorems~\ref{thm: lower aleksandrov} and \ref{thm: upper aleksandrov} hold and $0<h<h_0$. Let $x_1^\partial\in \partial \proj^\Omega_{\Xbar_0}(S^u_{2h, X_0, \Xbar_0})$ be such that $\proj^\Omega_{\Xbar_0}(X_1)=(1-s_1)x_0+s_1x_1^\partial$ for some $s_1\in [0, 1]$; by~\eqref{eqn: double height dilation} we have $s_1\leq \kappa$. Write $X^\partial_1:=(x^\partial_1, \beta_{\Xbar_0}(x^\partial_1))$, and first suppose $-c(X^\partial_1, \Xbar_1)+c(X^\partial_1, \Xbar_0)+c(X_0, \Xbar_1)-c(X_0, \Xbar_0)>0$. Then by~\eqref{eqn: lower est section in same side}, we can apply Proposition~\ref{prop: qqconv holds} with the barred and unbarred variables reversed, then use that $\Xbar_1\in \csubdiff{u}{X_1}$, followed by $x_1^\partial\in \partial \proj^\Omega_{\Xbar_0}(S^u_{2h, X_0, \Xbar_0})$, then $\Xbar_1\in \csubdiff{u}{X_1}$ again to see
\begin{align*}
    &-c(X_1, \Xbar_1)+c(X_1, \Xbar_0)+c(X_0, \Xbar_1)-c(X_0, \Xbar_0)\\
    &\leq s_1(-c(X^\partial_1, \Xbar_1)+c(X^\partial_1, \Xbar_0)+c(X_0, \Xbar_1)-c(X_0, \Xbar_0))\\
    &\leq \kappa(-c(X^\partial_1, \Xbar_1)+c(X^\partial_1, \Xbar_0)+c(X_0, \Xbar_1)-c(X_0, \Xbar_0))\\
    &\leq\kappa(u(X^\partial_1)-c(X_1, \Xbar_1)-u(X_1)+c(X^\partial_1, \Xbar_0)+c(X_0, \Xbar_1)-c(X_0, \Xbar_0))\\
    &= \kappa(-c(X_1, \Xbar_1)-u(X_1)+c(X_0, \Xbar_1)+u(X_0)+2h)
    \leq 2\kappa h;
\end{align*}
the same final upper bound also holds if $-c(X^\partial_1, \Xbar_1)+c(X^\partial_1, \Xbar_0)+c(X_0, \Xbar_1)-c(X_0, \Xbar_0)\leq 0$.
Then since $\Xbar_0\in \csubdiff{u}{X_0}$,
\begin{align*}
    u(X_0)&\leq u(X_1)+c(X_1, \Xbar_0)-c(X_0, \Xbar_0)\leq -c(X_0, \Xbar_1)+c(X_1, \Xbar_1)+2\kappa h.
\end{align*}
    With this engulfing property in hand, we can follow the proof of \cite[Lemma 8.4]{GuillenKitagawa17}, and then the proof of \cite[Theorem 2.4]{GuillenKitagawa17} in the case labeled \textbf{When $\bm{u}$ is $\bm{C^1}$} to obtain that for any $X_0\in\partial\Omega$ and any $X$ in some neighborhood of $X_0$,
    \begin{align*}
        \lvert u(X)+c(X, \Xbar_0)+h\rvert\leq Cd_{\partial\Omega}(X, X_0)^{1+\beta}
    \end{align*}
    for some $\beta\in (0, 1]$. As $\partial\Omega$ is itself $C^{1, \alpha}$ regular for some $\alpha$,  then the above shows local $C^{1, \min(\alpha, \beta)}$ regularity of $u$ on $\partial\Omega$, which in turn shows H\"older regularity of the map $T$.
\end{proof}
\subsection*{Acknowledgments/Competing Interests}
J. Kitagawa's research was supported in part by National Science Foundation grants DMS-2000128 and DMS-2246606. The authors would like to thank the anonymous referee for a careful reading and comments, which have led to significant improvements in the presentation and structure of the paper.

The authors have no other competing interests to declare that are relevant to the content of this article.
\subsection*{Data availability} We do not analyze or generate any datasets.
\bibliography{nontwistedbib.bib}
\bibliographystyle{plain}
\end{document}